\numberwithin{equation}{section}
\newtheorem{theorem}{Theorem}
\newtheorem{proposition}[theorem]{Proposition}
\newtheorem{conjecture}[theorem]{Conjecture}
\theoremstyle{definition}
\newtheorem{definition}[theorem]{Definition}
\newtheorem{remark}[theorem]{Remark}
\newtheorem{example}[theorem]{Example}
\numberwithin{theorem}{section}
\newcommand{\cc}{\operatorname{cc}}
\newcommand{\Conf}{C}
\newcommand{\Confb}{\overline{C}}
\newcommand{\Db}{\overline{D}}
\newcommand{\fillmap}{\operatorname{fill}}
\newcommand{\gammarc}{\gamma_{\operatorname{rc}}}
\newcommand{\geh}{\mathfrak{g}}
\newcommand{\HH}{\mathcal{H}}
\newcommand{\J}{I_0}
\newcommand{\ku}{{}}
\newcommand{\La}{\Lambda}
\newcommand{\mone}{\bar{1}}
\newcommand{\mtwo}{\bar{2}}
\newcommand{\mthree}{\bar{3}}
\newcommand{\mfour}{\bar{4}}
\newcommand{\RC}{\operatorname{RC}}
\newcommand{\RCb}{\overline{\RC}}
\newcommand{\sigmarc}{\sigma_{\operatorname{rc}}}
\newcommand{\Z}{\mathbb{Z}}
\begin{document}
 
\title{Affine crystal structure on rigged configurations of type $D_n^{(1)}$}

\author[M.~Okado]{Masato Okado}
\address{Department of Mathematical Science,
Graduate School of Engineering Science, Osaka University,
Toyonaka, Osaka 560-8531, Japan}
\email{okado@sigmath.es.osaka-u.ac.jp}

\author[R.~Sakamoto]{Reiho Sakamoto}
\address{Department of Physics, Tokyo University of Science, Kagurazaka, Shinjukuku, 
Tokyo 162-8601, Japan}
\email{reiho@rs.tus.ac.jp}
 
\author[A.~Schilling]{Anne Schilling}
\address{Department of Mathematics, University of California, One Shields
Avenue, Davis, CA 95616-8633, U.S.A.}
\email{anne@math.ucdavis.edu}
\urladdr{http://www.math.ucdavis.edu/\~{}anne}
 
\subjclass{Primary 17B37; Secondary: 05A19; 05A15; 81R50; 82B23}

\begin{abstract}
Extending the work in \cite{S:2006}, we introduce the affine crystal action
on rigged configurations which is isomorphic to the Kirillov--Reshetikhin
crystal $B^{r,s}$ of type $D_n^{(1)}$ for any $r,s$. We also introduce a
representation of $B^{r,s}$ ($r\ne n-1,n$) in terms of tableaux of rectangular shape $r\times s$,
which we coin Kirillov--Reshetikhin tableaux
(using a non-trivial analogue of the type $A$ column splitting procedure)
to construct a bijection between elements of a tensor product
of Kirillov--Reshetikhin crystals and rigged configurations.
\end{abstract}
 
\maketitle
 
\tableofcontents
 
\section{Introduction}

Motivated by studies using the Bethe Ansatz, Kerov, Kirillov and Reshetikhin~\cite{KKR:1986} introduced interesting 
new combinatorial objects coined rigged configurations (RCs), and found a bijection to semistandard tableaux. It was later 
realized~\cite{KSS:2002} that this type of bijection can be extended to a bijection between RCs and elements in a multiple
tensor product of Kirillov--Reshetikhin (KR) crystals of type $A$ satisfying highest weight conditions.
Since there is an action of Kashiwara operators on crystals, it is natural to try to find the action
on RCs through the bijection. This was achieved by the third author in~\cite{S:2006} for $f_i$ and $e_i$ with
$i\ne0$, and subsequently in~\cite{SW:2010} for $i=0$ by considering the action of the promotion 
operator on RCs.

We wish to consider a similar problem for type $D_n^{(1)}$. Since the multiplicities of irreducible components
in a multiple tensor product turn out large, we start by considering the single KR crystal $B^{r,s}$
in this paper. For classical Kashiwara operators $f_i$ and $e_i$ ($i\ne 0$) the action has already been provided 
in~\cite{S:2006}. The action of $f_0$ and $e_0$ is defined by 
\[
	f_0 = \sigma \circ f_1 \circ \sigma \quad \text{and} \quad e_0 = \sigma \circ e_1 \circ \sigma
\]
using the involution $\sigma$ corresponding to exchanging the Dynkin nodes $0$ and $1$. Since $\sigma$
commutes with $f_i$ and $e_i$ for $i=2,\ldots,n$, we only need to define the action of $\sigma$ on RCs which
are $\{2,\ldots,n\}$-highest weight. This is performed by defining a map $\gammarc$ from $\pm$-diagrams,
another combinatorial object parameterizing $\{2,\ldots,n\}$-highest weight elements, to 
$\{2,\ldots,n\}$-highest weight RCs. This map has an interesting feature. We cut a given $\pm$-diagram
into columns, associate to each column an atomic RC, and add up these atomic RCs in a certain way. In other
words, $\gammarc$ can be interpreted as a linear map.
Such a nice relationship between rigged configurations and
$\pm$-diagrams was originally suggested through the analysis of the expression of
the combinatorial $R$-matrix~\cite{OSaka:2010} written in terms of
$\pm$-diagrams via the expected property that the combinatorial $R$-matrix
acts as the identity on rigged configurations (Conjecture~\ref{conjecture.R}).

Combining these results we obtain our first main theorem (Theorem~\ref{theorem.main}) describing the affine 
crystal structure on rigged configurations corresponding to a single Kirillov--Reshetikhin crystal of type $D_n^{(1)}$.
In addition, we show that the associated crystal isomorphism preserves the grading by energy and cocharge
(Theorem~\ref{theorem:cc=D_for_single_rectangle}).
This sheds new light on the crystal structure of KR crystals.
As discussed above, the core of the construction of $e_0$ and $f_0$ is
the bijection $\gamma$~\cite{S:2008} between $\pm$-diagrams and $\{2,3,\ldots,n\}$-highest weight
Kashiwara--Nakashima tableaux~\cite{KN:1994}.
However, the bijection $\gamma$ requires a non-trivial algorithm as described in Proposition~\ref{proposition.gamma}.
According to our results, $\pm$-diagrams are related to rigged configurations by a
linear operation $\gammarc$.
Thus it is tempting to regard $\pm$-diagrams and rigged configurations as having a common mathematical origin.

We remark that these results can be viewed as another important example of significant properties
of rigged configurations with respect to deep structures of the underlying algebra.
For example, in~\cite{OSaka:2011} an interesting new bijection related to rigged configurations
and Littlewood--Richardson tableaux is introduced which is expected to be an analogue of the involution corresponding
to exchanging Dynkin nodes 0 and $n-1$ constructed in~\cite{LOS:2011}.
Another such phenomenon is that generalizations of Sch\"{u}tzenberger's involution become
simple operations on rigged configurations (taking complements of the riggings, see~\cite{SS:X=M}).
Also the complicated action of the combinatorial $R$-matrix becomes trivial on
rigged configurations (see~\cite{SS:X=M}).
This property plays a key role in the recently discovered connection with
a discrete integrable system called the box-ball system (see \cite{KOSTY:2006}).
Hence it is desirable to find a description of the bijection $\Phi$
between rigged configurations and elements of tensor product of Kirillov--Reshetikhin crystals
in an explicit way.

This brings us to the next purpose of the present paper. We provide an explicit combinatorial algorithm
for $\Phi$ (Section~\ref{section.cb}), which leads to a definition of a completely new set of tableaux
which we coin Kirillov--Reshetikhin tableaux (see Section~\ref{section.filling map} for the definition).
For type $A$ the bijection from rigged configurations to Kirillov--Reshetikhin crystals is given by successively 
applying a fundamental algorithm $\delta$.
Each application of $\delta$ produces a letter, which can be placed in the $r\times s$ rectangle corresponding to the
Kirillov--Reshetikhin crystal $B^{r,s}$ resulting into a semistandard tableau.
The algorithm for $\delta$ also exists for type $D$~\cite{OSS:2003a} for tensor products of the Kirillov--Reshetikhin
crystal associated to the vector representation. 
We extend this to arbitrary Kirillov--Reshetikhin crystals $B^{r,s}$, which produces tableaux whose shape is 
an $r\times s$ rectangle. We note that these tableaux are completely different from the usual
Kashiwara--Nakashima tableaux~\cite{KN:1994} since in their representation tableaux do not necessarily have 
rectangular shape.
The correspondence between Kashiwara--Nakashima and Kirillov--Reshetikhin tableaux
for the highest weight elements is given by a map called the filling map
and is extended to arbitrary elements via an isomorphism of crystals (see Definition~\ref{def:KRtableaux}).
In Theorem~\ref{theorem:single_highest_case} we show that for the classically highest weight
elements in a single Kirillov--Reshetikhin crystal the combinatorial definition of the bijection
agrees with the correspondence under the affine crystal isomorphism.
At the end of the paper we state several conjectures (Conjectures~\ref{conjecture.broccoli}, \ref{conjecture.R},
and~\ref{conjecture.energy charge}) that provide evidence that our new tableaux representation is natural and useful.

We remark that our strategy for defining a new tableau representation for Kirillov--Reshetikhin crystals
is, in principle, not limited to type $D^{(1)}_n$.
Indeed, there are several extensions of the combinatorial algorithm $\delta$ for arbitrary non-exceptional affine algebras
\cite{OSS:2003a} as well as type $E^{(1)}_6$ \cite{OSano}.
This forms another motivation for the study of the combinatorial bijection between
rigged configurations and Kirillov--Reshetikhin crystals.

The paper is organized as follows. In Section~\ref{section.crystal background} we review facts about crystal
bases that are needed for this paper. Rigged configurations and background material are presented in
Section~\ref{section.rc}. Section~\ref{section.affine structure} contains the main results, namely the
affine crystal structure on rigged configurations for a single tensor factor for type $D_n^{(1)}$.
The combinatorial bijection and associated conjectures are the subject of Section~\ref{section.cb}.

\subsection*{Acknowledgments}
MO is partially supported by the Grants-in-Aid for Scientific Research No.
23340007 and No. 23654007 from JSPS.
RS is partially supported by Grants-in-Aid for Scientific Research No. 21740114 from JSPS.
AS is in part supported by NSF grants DMS--0652641 and DMS--1001256.

\section{Background on crystals}
\label{section.crystal background}

In this section we review some facts needed about crystal bases.

\subsection{Review of crystals and notation}

Crystal theory was introduced by Kashiwara~\cite{Kashiwara:1994} and provides a combinatorial approach
in terms of tableaux to the representation theory of quantum groups and Lie algebras.
A crystal is a nonempty set $B$ together with Kashiwara lowering and raising operators
$f_i$ and $e_i$ for $i\in I$, where $I$ is the index set of the Dynkin diagram of the associated
Lie algebra $\geh$. The Kashiwara operators are the $q\to 0$ limits of the Chevalley operators of the
corresponding quantum algebra $U_q(\geh)$. One of the amazing properties of crystals is the
fact that they are well-behaved with respect to tensor products. Given two $\geh$-crystals $B$ and $B'$,
the Kashiwara operators on the tensor product $B\otimes B'$ can be described by a completely
combinatorial rule called the signature rule.
For an introduction to crystal theory see for example the book by Hong and Kang~\cite{HK:2002}.

For an affine Kac--Moody algebra $\geh$, we denote by $\geh_0$ the finite-dimensional simple Lie algebra
obtained by removing the $0$ node from the Dynkin diagram of $\geh$ and by $\alpha_i$ ($i\in I$) the 
simple roots. We also denote by $\varpi_i$ ($i\in I_0:=I\setminus\{0\}$) the fundamental weights of $\geh_0$. Let $\Lambda$ be a dominant weight of $\geh_0$.
For crystals $B(\Lambda)$ associated to highest weight representations of highest weight
$\Lambda$ of $U_q(\geh_0)$, there exist generalizations of the usual semistandard Young tableaux
(which we can think of as type $A$ objects) known as Kashiwara--Nakashima (KN) tableaux~\cite{KN:1994}.
For type $D_n$ these are tableaux of shape $\Lambda$ over some ordered alphabet $\{1< 2 < \ldots < n,\overline{n} <
\ldots < \overline{2} < \overline{1}\}$. Here the letters $n$ and $\overline{n}$ are incomparable.
For the precise definition of the semistandard condition for type $D_n$ see~\cite{HK:2002}.
Let $B$ be a crystal of type $\geh$ and $b\in B$. For a subset $J\subset I$ we say that $b$ is $J$-\textbf{highest weight}
if $e_i(b) =0$ for $i\in J$. We say $b$ is \textbf{highest weight} if it is $\J$-highest weight.

In this paper we freely identify dominant weights (without spin nodes) and partitions. More precisely, given a partition 
$\lambda = (\lambda_1\ge \lambda_2 \ge \cdots \ge \lambda_\ell)$ with at most $n-2$ parts
(that is $\ell\le n-2$), we can associate the dominant weight $\Lambda = \varpi_{i_1} + \cdots + \varpi_{i_k}$
where the $i_j$ for $1\le j\le k=\lambda_1$ are the heights of the columns in $\lambda$. When we draw
the Ferrers diagram for the partition $\lambda$ with $\lambda_i$ boxes in row $i$, we use English notation 
adjusting the rows on the left and placing the largest part on the top. 
The height of a cell in a partition is equal to its row index (that is the distance from the top of the Ferrers diagram).
We also use English convention for tableaux.

\subsection{$\pm$-diagrams and definition of $\sigma$} \label{section.sigma}

In order to define Kirillov--Reshetikhin crystals for type $D_n^{(1)}$ following~\cite{FOS:2009,S:2008}, we need to define
an involution $\sigma$ which corresponds to the type $D_n^{(1)}$ Dynkin diagram automorphism of interchanging
nodes 0 and 1. This is achieved by noting that $\sigma$ commutes with the Kashiwara crystal operators $f_i$ and $e_i$
for $i\in \{2,3,\ldots,n\}$. Then $\sigma$ is defined explicitly on $\{2,3,\ldots,n\}$-highest weight vectors.

It turns out that $\{2,3,\ldots,n\}$-highest weight vectors are in bijection with so-called $\pm$-diagrams
(see Proposition~\ref{proposition.gamma} below). A \textbf{$\pm$-diagram} $P$ is a sequence of shapes
$\lambda \subset \mu \subset \Lambda$ such that $\Lambda/\mu$ and $\mu/\lambda$ are horizontal strips
(i.e. every column contains at most one box). We depict this $\pm$-diagram by the skew tableau of shape
$\Lambda/\lambda$ in which the cells of $\mu/\lambda$ are filled with the symbol $+$ and those of
$\Lambda/\mu$ are filled with the symbol $-$. The partition $\Lambda$ is called the outer shape of $P$ and
$\lambda$ is called the inner shape of $P$. In this paper we only require $\pm$-diagrams for the nonspin case,
that is, when the height of $\Lambda$ is at most $n-2$.

For our purposes it will be convenient to state the bijection between $\pm$-diagrams and $\{2,3,\ldots,n\}$-highest
weight elements in an inductive fashion.

\begin{proposition}
\label{proposition.gamma}
\rm{(\cite{S:2008},\cite[Section 3.2]{FOS:2009})}
There is a bijection $\gamma$ from $\pm$-diagrams of outer shape $\Lambda$ to $\{2,3,\ldots,n\}$-highest weight elements
in the highest weight crystal $B(\Lambda)$. 
The $\pm$-diagram $P$ which has $+$ in every column and no $-$ corresponds to the highest weight vector $u\in B(\Lambda)$ 
of weight equal to the outer shape of $P$. Given a $\pm$-diagram $P$ we can obtain the corresponding 
$\{2,3,\ldots,n\}$-highest element $\gamma(P)=b$ inductively as follows:
\begin{enumerate}
\item[\textbf{Case 1:}] $P$ has a column where a $+$ can be added.\newline
Let $P'$ be the $\pm$-diagram obtained from $P$ by adding a $+$ in the rightmost possible column at height $h$.
Then $b = f_1 f_2 \cdots f_h \gamma(P')$.
\item[\textbf{Case 2:}] $P$ has no column where a $+$ can be added and at least one $-$.\newline
Let $P'$ be the $\pm$-diagram obtained from $P$ by removing the leftmost $-$ at height $h$ and either moving the $+$
in the same column down if $h>1$ or adding a $+$ if $h=1$.
Then $b = f_1 f_2 \cdots f_n f_{n-2} f_{n-3} \cdots f_h \gamma(P')$.
\end{enumerate}
\end{proposition}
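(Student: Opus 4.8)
\emph{Proof strategy.}
The plan is to run an induction along the recursion itself. Since $B(\Lambda)$ is a highest weight crystal it is generated from its unique $\J$-highest weight vector $u$ by the $f_i$, and an element is $\J$-highest weight precisely when it equals $u$; I will use both facts throughout. To see that the recursion is well founded, attach to a $\pm$-diagram $P$ with inner shape $\lambda$ the statistic $|\lambda|+2m_-$, where $m_-$ is the number of $-$'s in $P$; a short inspection of the two moves shows that this statistic strictly decreases under $P\mapsto P'$ in either Case~1 or Case~2, and since it is bounded below the recursion terminates, necessarily at the all-$+$ diagram, as one checks that this is the only $\pm$-diagram to which neither case applies. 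I would then prove by induction on $|\lambda|+2m_-$ that $b:=\gamma(P)$ satisfies: (a) every Kashiwara operator occurring in the prescribed word acts nontrivially, so that $b$ is a genuine element of $B(\Lambda)$; (b) $b$ is $\{2,3,\ldots,n\}$-highest weight; and (c) $\operatorname{wt}(b)=(m_+-m_-)\epsilon_1+\sum_{i\ge2}\lambda_{i-1}\epsilon_i$, where $m_+$ is the number of $+$'s and $\epsilon_i$ are the standard weights for type $D_n$. Bijectivity I would treat separately at the end.

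The base case is immediate: $b=u$, $m_-=0$, and $\operatorname{wt}(u)=\Lambda$, which is (c) since then $m_+=\Lambda_1$ and $\lambda_{i-1}=\Lambda_i$ for $i\ge2$. For the inductive step, (c) applied to $\gamma(P')$ gives $\varepsilon_i(\gamma(P'))=0$ and hence $\varphi_i(\gamma(P'))=\langle\operatorname{wt}(\gamma(P')),\alpha_i^\vee\rangle$ for $2\le i\le n$. Reading the word from its innermost operator outward and tracking the weight (each $f_j$ lowers it by $\alpha_j$), one checks that whenever $f_i$ is about to be applied the current weight pairs with $\alpha_i^\vee$ to at least $1$: the pairing is a nonnegative difference $\lambda_{i-1}-\lambda_i$ inherited from (c), increased by the contribution $-\langle\alpha_j,\alpha_i^\vee\rangle=1$ of the adjacent operator $f_j$ applied just before; near the fork one uses instead $\alpha_n=\epsilon_{n-1}+\epsilon_n$, which is exactly why the word of Case~2 detours out through $f_n$ and $f_{n-1}$ before climbing back down. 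This yields (a), and evaluating the total weight shift --- $-(\epsilon_1-\epsilon_{h+1})$ in Case~1 and $-(\epsilon_1+\epsilon_h)$ in Case~2 --- against how the shapes of $P'$ differ from those of $P$ gives (c) for $P$. I expect (a) and (c) to be routine.

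Property (b) is the real obstacle, since pure weight bookkeeping does not see it. Here I would pass to the realization of $B(\Lambda)$ by Kashiwara--Nakashima tableaux and check that the prescribed word, applied to the tableau $\gamma(P')$, produces a tableau on which $e_i$ vanishes for $i=2,\ldots,n$, by the signature rule; this is a finite but genuinely type-$D_n$ manipulation, the delicate point being the behaviour near the fork ($f_{n-1}$, $f_n$, and the incomparability of $n$ and $\overline{n}$ in the alphabet). Alternatively --- and this is probably the cleanest route --- one may take the bijection $\gamma$ of \cite{S:2008,FOS:2009} together with its explicit description as given, in which case the only thing left to prove is that it obeys the two recursion rules; that reduces to the same inductive check as above, but now (b) comes for free and only (a) and (c) need verifying.

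It remains to prove that $\gamma$ is a bijection. For injectivity I would invert the recursion: if $b$ is $\{2,3,\ldots,n\}$-highest weight and $b\ne u$, then $e_1 b\ne0$ (otherwise $b$ would be $\J$-highest weight, hence $b=u$), and from $\operatorname{wt}(b)$ together with the lengths of the relevant $e_i$-strings one reads off which of the two cases produced $b$ and the value of $h$, hence the whole word, hence $\gamma(P')$, and then $P$ by induction. Surjectivity follows by counting: the number of $\pm$-diagrams of outer shape $\Lambda$ equals the number of pairs $\lambda\subseteq\mu\subseteq\Lambda$ with $\Lambda/\mu$ and $\mu/\lambda$ horizontal strips, and --- using the classical branching rule for $D_n\downarrow D_{n-1}$, which is valid without correction in the nonspin range $\operatorname{ht}(\Lambda)\le n-2$ assumed here --- this count equals $\sum_\nu[B(\Lambda):B_{D_{n-1}}(\nu)]$, the number of $\{2,3,\ldots,n\}$-highest weight vectors of $B(\Lambda)$. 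Since $\gamma$ maps injectively into this set, it must be onto.
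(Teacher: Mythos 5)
The paper does not actually prove this proposition: it is quoted from \cite{S:2008} and \cite[Section 3.2]{FOS:2009}, and the recursive form given here is just a restatement of the bijection from those references. So your proposal has to stand on its own, and while its skeleton is sound, it stops short of the one point that carries the mathematical content. Your termination statistic $|\lambda|+2m_-$ does strictly decrease under both moves, your weight formula (c) is correct and consistent with the shifts $-(\epsilon_1-\epsilon_{h+1})$ and $-(\epsilon_1+\epsilon_h)$ in the two cases, and the surjectivity count via the $D_n\downarrow D_{n-1}$ branching rule (valid in the nonspin range) is exactly the counting that underlies \cite{S:2008}. But item (b) --- that the element produced by the prescribed Kashiwara word is again $\{2,3,\ldots,n\}$-highest weight --- is precisely the proposition, and you defer it as ``a finite but genuinely type-$D_n$ manipulation'' without carrying it out. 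Weight bookkeeping cannot see it, as you say, and the signature-rule verification near the fork (where $n$ and $\overline{n}$ are incomparable, and where $f_n$ and $f_{n-1}$ both act on the letters $n-1,\overline{n-1},n,\overline{n}$) is where all the difficulty of the type-$D$ statement lives. Your fallback of importing $\gamma$ from \cite{S:2008} does not avoid this: the algorithm there is stated differently (it is not this recursion), so one must still check that the two descriptions agree, which is the same computation.

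A second, smaller gap is injectivity. Since your counting argument only gives $|\mathrm{domain}|=|\mathrm{codomain}|$, you need injectivity (or surjectivity) proved independently, and your inversion sketch leaves two things unjustified: (i) that the case (1 versus 2) and the height $h$ are determined by $b$ alone via string lengths --- note that $\mathrm{wt}(b)$ determines only $m_+-m_-$ and $\lambda$, not $P$, so different $P$ with the same weight must be separated by the string-length data, which requires an argument; and (ii) that the move $P\mapsto P'$ is itself injective, i.e.\ that from $P'$ and $h$ one can identify which $+$ was added (the ``rightmost possible column'' condition must be shown to be recoverable). Neither point is difficult, but as written the bijectivity claim is not established. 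In short: correct architecture, correct bookkeeping, but the two steps that constitute the actual theorem --- highest-weightness of the output and well-definedness of the inverse --- are asserted rather than proved.
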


\begin{example} Let $n=5$ and 
$$P=\,\Yvcentermath1\young(\ku\ku\ku,\ku+,\ku-)\, .$$
Then according to the inductive procedure of Proposition~\ref{proposition.gamma} we have
\begin{equation*}
\begin{split}
	\gamma \left( \,\Yvcentermath1\young(\ku\ku\ku,\ku+,\ku-)\, \right) 
	&= f_1 \, \gamma \left( \,\Yvcentermath1\young(\ku\ku +,\ku+,\ku-)\, \right) 
	= (f_1) (f_1 f_2 f_3) \, \gamma \left( \,\Yvcentermath1\young(\ku\ku +,\ku+,+-)\, \right) \\[1mm]
	&= (f_1) (f_1 f_2 f_3) (f_1f_2 f_3 f_4 f_5 f_3)\, \gamma \left( \,\Yvcentermath1\young(\ku\ku +,\ku\ku,++)\, \right)\\[1mm]
	&= (f_1) (f_1 f_2 f_3) (f_1f_2 f_3 f_4 f_5 f_3) \, \Yvcentermath1\young(111,22,33)
	= \Yvcentermath1\young(122,23,4\mone) \; .
\end{split}
\end{equation*}
\end{example}

We now define the following map $\mathfrak{S}$ on $\pm$-diagrams.
Let $c_.(h)$, $c_+(h)$, $c_-(h)$, $c_\pm(h)$ be the number of columns in $P$ of outer height $h$ 
with no sign, $+$, $-$, $\pm$, respectively. As we will see in~\eqref{equation.KR decomposition} below, in our setting
the values of $h$ are either all even or all odd. Note that $P$ is specified if all values $c_.(h)$, $c_+(h)$, $c_-(h)$, $c_\pm(h)$
are given.

\begin{definition} \label{definition.sigma pm}
Let $P$ be a $\pm$-diagram of outer shape $\Lambda$, where the columns in $\Lambda$ are either all of even or
all of odd height. Then $\mathfrak{S}(P)$ is the $\pm$ diagram, where compared to $P$ the values $c_+(h)$ and $c_-(h)$
are interchanged for $h\ge 1$, and the values of $c_.(h-2)$ and $c_\pm(h)$ are interchanged for $h\ge 2$.
\end{definition}

\begin{example}
Let $n\ge 5$ and 
$$P=\,\Yvcentermath1\young(\ku\ku++-,\ku+,\ku-)\, .$$
In this case $c_.(3)=c_\pm(3)=c_-(1)=1$, $c_.(1)=0$, and $c_+(1)=2$. Then
$$\mathfrak{S}(P)=\,\Yvcentermath1\young(\ku\ku+--,\ku,\ku,+,-)\, .$$
We note here a subtle point which will become relevant in Definition~\ref{definition.sigma}. Namely, if the
height of the diagram is restricted to $r$ (in our case say $r=3$), then $c_.(r)$ does not change. In this case
$$\mathfrak{S}(P)=\,\Yvcentermath1\young(\ku\ku+--,\ku,\ku)\, .$$
\end{example}

\subsection{Kirillov--Reshetikhin crystals of type $D_n^{(1)}$} \label{section.KR}

Let $\geh$ be an affine Kac--Moody Lie algebra with index set $I=\{0,1,\ldots,n\}$.
Kirillov--Reshetikhin (KR) crystals $B^{r,s}$ are indexed by $r\in \J := I \setminus \{0\}$ and an integer $s\ge 1$.
For nonexceptional types their existence was proven in~\cite{O:2007,OS:2008}. In this paper we only
deal with the KR crystals $B^{r,s}$ of type $D_n^{(1)}$.

As classical crystals the Kirillov--Reshetikhin crystals usually decompose into several components.
For type $D_n^{(1)}$, the classical decomposition of $B^{r,s}$ for $1\le r\le n-2$ is
\begin{equation} \label{equation.KR decomposition}
	B^{r,s} \cong \bigoplus_\lambda B(\lambda) \qquad \text{as $D_n$-crystals,}
\end{equation}
where the sum is over all partitions (or equivalently weights) obtained from $(s^r)$ by removing vertical dominoes.
Each term appears with multiplicity one. For the spin cases $r=n-1,n$ we have
\begin{equation} \label{equation.KR decomposition spin}
	B^{r,s} \cong B(s\varpi_r) \qquad \text{as $D_n$-crystals.}
\end{equation}

The affine Kashiwara crystal operators $f_0$ and $e_0$ are defined as
\begin{equation}
	f_0 = \sigma \circ f_1 \circ \sigma \qquad \text{and} \qquad e_0 = \sigma \circ e_1 \circ \sigma \;,
\end{equation}
where $\sigma$ is the analogue of the Dynkin automorphism which interchanges nodes 0 and 1 as given in the 
next definition.

\begin{definition} \label{definition.sigma}
Let $t\in B^{r,s}$ with $B^{r,s}$ a KR crystal of type $D_n^{(1)}$ with $1\le r\le n-2$.
Choose a sequence $\mathbf{b}=(b_1,b_2,\ldots,b_k)$ with $b_i\in\{2,3,\ldots,n\}$ such that 
$e_{\mathbf{b}}(t):=e_{b_1} e_{b_2} \cdots e_{b_k}(t)$ is $\{2,3,\ldots,n\}$-highest weight. Then define
\begin{equation}
	\sigma(t) := f_{\mathbf{b}'} \circ \gamma \circ \; \mathfrak{S} \circ \gamma^{-1} 
	\circ \;e_{\mathbf{b}}(t)\; ,
\end{equation}
where $\mathbf{b}'$ is the reverse of $\mathbf{b}$ so that $f_{\mathbf{b}'} := f_{b_k} \cdots f_{b_1}$,
$\mathfrak{S}$ as in Definition~\ref{definition.sigma pm} with the heights restricted to $1\le h\le r$,
and $\gamma$ as in Proposition~\ref{proposition.gamma}.
\end{definition}

Let $B_1$, $B_2$ be two affine crystals with generators $v_1$ and $v_2$, respectively, such
that $B_1\otimes B_2$ is connected and $v_1\otimes v_2$ lies in a one-dimensional weight space.
By~\cite[Proposition 3.8]{LOS:2011}, this holds for any two
KR crystals. The generator $v$ for the KR crystal $B^{r,s}$ is chosen to be the unique element of classical weight 
$s\varpi_r$.
 
The \textbf{combinatorial $R$-matrix}~\cite[Section 4]{KMN1:1992} is the unique affine
crystal isomorphism
\begin{equation} \label{equation.R}
    R : B_1 \otimes B_2 \to B_2 \otimes B_1.
\end{equation}
By weight considerations, this must satisfy $R(v_1 \otimes v_2) = v_2 \otimes v_1$.

On (tensor products of) Kirillov--Reshetikhin crystals, there is a \textbf{(co)energy} function defined
\begin{equation} \label{equation.energy}
	\Db : B \to \Z.
\end{equation}
The (co)energy is constant on classical components. For a single $B^{r,s}$ the coenergy for the classical component 
$B(\lambda)$ is equal to the number of vertical dominoes in $(s^r) \setminus \lambda$~\cite{HKOTY:1999} (see 
also~\cite[Definition 5.4, Theorem 7.5]{ST:2011}). For the definition of the (co)energy on general tensor factors, see for 
example~\cite[Theorem 2.4]{OSS:2003} \cite{ST:2011}.

\section{Rigged configurations}
\label{section.rc}

Let $\geh$ be a simply-laced affine Kac--Moody algebra with index set $I$ of the underlying Dynkin diagram.
Recall that $\J=I\setminus\{0\}$ is the index set of the underlying algebra of finite
type $\geh_0$ and set $\HH=\J \times \Z_{>0}$. The (highest-weight) rigged configurations
are indexed by a multiplicity array $L=(L_i^{(a)}\mid (a,i)\in \HH)$ of
nonnegative integers and a dominant weight $\La$ of $\geh_0$. Note that only finitely many $L_i^{(a)}$
in the multiplicity array $L$ are nonzero.
The sequence of partitions $\nu=\{\nu^{(a)}\mid a\in \J \}$ is an
\textbf{$(L,\La)$-configuration} if
\begin{equation}\label{eq:conf}
\sum_{(a,i)\in\HH} i m_i^{(a)} \alpha_a = \sum_{(a,i)\in\HH} i
L_i^{(a)} \varpi_a- \La,
\end{equation}
where $m_i^{(a)}$ is the number of parts of length $i$ in partition
$\nu^{(a)}$. Denote the set of all $(L,\La)$-configurations by $\Conf(L,\La)$.
The \textbf{vacancy number} $p_i^{(a)}$ of a configuration is defined as
\begin{equation}\label{equation.vacancy}
	p_i^{(a)}=\sum_{j\ge 1} \min(i,j) L_j^{(a)}
 	- \sum_{(b,j)\in \HH} (\alpha_a | \alpha_b) \min(i,j)m_j^{(b)}.
\end{equation}
Here $(\cdot | \cdot )$ is the normalized invariant form on the weight lattice $P$
such that $A_{ab}=(\alpha_a | \alpha_b)$ is the Cartan matrix.
The $(L,\La)$-configuration $\nu$ is \textbf{admissible} if $p^{(a)}_i\ge 0$ for 
all $(a,i)\in\HH$, and the set of admissible $(L,\La)$-configurations is denoted
by $\Confb(L,\La)$.

A \textbf{rigged configuration} is an admissible configuration together with
a set of labels of quantum numbers. A partition can be viewed as
a multiset of positive integers.
A rigged partition is by definition a finite multiset of
pairs $(i,x)$ where $i$ is a positive integer and
$x$ is a nonnegative integer.  The pairs $(i,x)$ are referred to
as \textbf{strings}; $i$ is referred to as the
length or size of the string and $x$ as the \textbf{label} or
\textbf{rigging} of the string.  A rigged partition is
said to be a rigging of the partition $\rho$ if
the multiset, consisting of the sizes of the strings,
is the partition $\rho$.  So a rigging of $\rho$
is a labeling of the parts of $\rho$ by nonnegative integers,
where one identifies labelings that differ only by
permuting labels among equal sized parts of $\rho$.

A rigging $J$ of the $(L,\La)$-configuration $\nu$ is a sequence of
riggings of the partitions $\nu^{(a)}$ such that
every label $x$ of a part of $\nu^{(a)}$ of size $i$
satisfies the inequality
\begin{equation*}
  0 \le x \le p^{(a)}_i.
\end{equation*}
Alternatively, a rigging of a configuration $\nu$ may be viewed
as a double-sequence of partitions $J=(J^{(a,i)}\mid (a,i)\in\HH)$
where $J^{(a,i)}$ is a partition that has at most $m_i^{(a)}$ parts
each not exceeding $p_i^{(a)}$.
The pair $(\nu,J)$ is called a rigged configuration.

\begin{definition}
The set of riggings of admissible $(L,\La)$-configurations is denoted by $\RCb(L,\La)$.
Let $(\nu,J)^{(a)} = (\nu^{(a)},J^{(a)})$ be the $a$-th rigged partition of $(\nu,J)$. 
The \textbf{colabel} or \textbf{corigging} of a string $(i,x)$ in $(\nu,J)^{(a)}$ is defined to be $p_i^{(a)}-x$.
A string $(i,x)\in (\nu,J)^{(a)}$ is said to be \textbf{singular} if $x=p^{(a)}_i$, that is,
its label takes on the maximum value.
We also set
\[
	\RCb(L) = \bigsqcup_{\Lambda \in P^+} \RCb(L,\Lambda),
\]
where $P^+$ is the set of dominant weights.
\end{definition}

\begin{remark}
Given a tensor product of KR crystals $B = B^{r_1,s_1} \otimes \cdots \otimes B^{r_k,s_k}$ with
$(r_i,s_i) \in \HH$, we can associate to it a multiplicity array $L = (L_s^{(r)} \mid (r,s) \in \HH)$, where
$L_s^{(r)}$ counts the number of tensor factors $B^{r,s}$ in $B$. Given this natural correspondence we sometimes
also use the notation $\RCb(B,\Lambda)$ or $\RCb(B)$. Note, however, that the rigged configurations do not
depend on the order of the tensor factors in $B$, just their multiplicities.
\end{remark}

The set of rigged configurations is endowed with a natural
statistic $\cc$ called \textbf{cocharge}. For a configuration 
$\nu\in\Confb(L,\La)$ define
\begin{equation} \label{equation.cocharge}
	\cc(\nu)=\frac{1}{2} \sum_{(a,j),(b,k)\in \HH} (\alpha_a|\alpha_b) 
 	\min(j,k) m_j^{(a)} m_k^{(b)}.
\end{equation}
For a rigged configuration $(\nu,J)\in\RCb(L,\La)$ set
\begin{equation} \label{equation.cocharge J}
  	\cc(\nu,J)=\cc(\nu)+\sum_{(a,i)\in\HH} |J^{(a,i)}|,
\end{equation}
where $|J^{(a,i)}|$ is the size of partition $J^{(a,i)}$.

In~\cite{Kleber:1998}, Kleber gave an algorithm to produce all admissible configurations for a given sequence of
rectangles. In particular his algorithm shows that for type $D_n^{(1)}$ and for a single tensor factor $B^{r,s}$
all admissible rigged configurations are given as follows:

\begin{proposition}[Kleber~\cite{Kleber:1998}]
\label{proposition.kleber}
Let $1\le r \le n-2$, $s\ge 1$ be integers, and $B^{r,s}$ a Kirillov--Reshetikhin crystal of type $D_n^{(1)}$. Then
\begin{equation} \label{equation.RC decomposition}
	\RCb(B^{r,s}) = \bigsqcup_\lambda \RCb(B^{r,s},\lambda)\;,
\end{equation}
where $\lambda$ is obtained from $(s^r)$ by removing vertical dominoes. In addition, $\RCb(B^{r,s},\lambda)$
contains the single element $(\nu,J)$ with
\begin{equation}
\nu^{(a)} = \begin{cases} 
	\overline{\lambda}^{[r-a]} & \text{for $1\le a < r$,}\\
	\overline{\lambda}             & \text{for $r\le a\le n-2$,}\\
	\overline{\lambda}'            & \text{for $a=n-1,n$,}
\end{cases}
\end{equation}
and all riggings in $J$ being zero. Here $\overline{\lambda}$ is the complement of the partition $\lambda$ in the rectangle 
$(s^r)$, $\overline{\lambda}^{[b]}$ is obtained from $\overline{\lambda}$ by removing the $b$ longest rows,
and $\overline{\lambda}'$ is obtained from $\overline{\lambda}$ considering only odd rows.
\end{proposition}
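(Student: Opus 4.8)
\emph{Proof strategy.}
This proposition is what Kleber's algorithm~\cite{Kleber:1998} yields once specialized to the multiplicity array of a single Kirillov--Reshetikhin crystal, and this is how I would organize the argument. For $B^{r,s}$ the multiplicity array has $L_s^{(r)}=1$ and all other entries zero, so the relation~\eqref{eq:conf} defining an $(L,\La)$-configuration reads
\[
\sum_{a\in\J}|\nu^{(a)}|\,\alpha_a \;=\; s\varpi_r-\La,\qquad |\nu^{(a)}|:=\sum_{i\ge1} i\,m_i^{(a)},
\]
so $\La$ is recovered from the sizes $|\nu^{(a)}|$ (which are constrained by the requirement that $\La$ be dominant), while the vacancy numbers~\eqref{equation.vacancy} become
\[
p_i^{(a)} \;=\; \min(i,s)\,\delta_{a,r}-\sum_{b\in\J}(\alpha_a|\alpha_b)\,Q_i^{(b)},\qquad Q_i^{(a)}:=\sum_{j\ge1}\min(i,j)\,m_j^{(a)}.
\]

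The first step is to run Kleber's algorithm~\cite{Kleber:1998} for the single rectangle of width $s$ and height $r$. It produces a finite tree of dominant weights, attaches to each node an admissible configuration together with its weight $\La$, and every admissible configuration (for every $\La$) arises this way exactly once. For this input the branching is controlled by the fork $\{n-1,n\}$ of the $D_n$ Dynkin diagram: a detour at the fork, propagated back along the chain $n-2,n-3,\dots$, amounts to deleting one vertical domino from $(s^r)$. Reading the configuration off the path from the root to a node, one finds that the weights occurring are exactly the partitions $\lambda$ obtained from $(s^r)$ by removing vertical dominoes, each from a single node, and that the configuration attached to $\lambda$ is $\nu^{(a)}=\overline{\lambda}^{[r-a]}$ for $1\le a<r$, $\nu^{(a)}=\overline{\lambda}$ for $r\le a\le n-2$, and $\nu^{(n-1)}=\nu^{(n)}=\overline{\lambda}'$, with notation as in the statement. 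This yields~\eqref{equation.RC decomposition} and the asserted shape of $\nu$; as a consistency check, the resulting index set of $\lambda$'s coincides with the one in the classical decomposition~\eqref{equation.KR decomposition}.

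It then remains to show that each such $\nu$ is admissible and that its only rigging is $J\equiv0$. Since a configuration with $p_i^{(a)}\ge0$ for all $(a,i)$ admits $\prod_{a,i}\binom{m_i^{(a)}+p_i^{(a)}}{m_i^{(a)}}$ riggings, it is enough to check that $p_i^{(a)}\ge0$ always and $p_i^{(a)}=0$ whenever $m_i^{(a)}>0$. Writing $\lambda=(\lambda_1\ge\dots\ge\lambda_r\ge0)$, its complement in $(s^r)$ has rows $\overline{\lambda}_k=s-\lambda_{r+1-k}$, so the row lengths of every $\nu^{(a)}$ are explicit and each $Q_i^{(a)}$ is an explicit piecewise-linear function of $i$. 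Substituting these into the formula for $p_i^{(a)}$ and using that $(s^r)/\lambda$ is a union of vertical dominoes gives the claim by a finite, if tedious, computation, organized by the second-difference identity
\[
p_{i-1}^{(a)}-2p_i^{(a)}+p_{i+1}^{(a)} \;=\; 2m_i^{(a)}-\sum_{b}m_i^{(b)}-\delta_{i,s}\,\delta_{a,r}
\]
(the inner sum over the neighbours of $a$; a direct consequence of the formula for $p_i^{(a)}$) together with $p_0^{(a)}=0$ and the stable value $p_i^{(a)}=\langle\La,\alpha_a^{\vee}\rangle\ge0$ for $i$ large: these reduce the verification to a node-by-node induction along the chain $1$--$2$--$\cdots$--$(n-2)$ and across the fork to $n-1,n$.

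The routine parts of this plan are the two linear computations above. The main obstacle is the enumeration in the second step: making precise why Kleber's tree for a single $D_n$-rectangle yields no admissible configuration beyond those listed, and in particular why the truncations $\overline{\lambda}^{[r-a]}$ for $a<r$ and the ``odd rows only'' shape $\overline{\lambda}'$ at the spin nodes arise --- this is exactly where the branching at the fork $\{n-1,n\}$ feeds back along the Dynkin chain. If one prefers to bypass the general algorithm of~\cite{Kleber:1998}, the same conclusion can be reached directly: the second-difference identity above, combined with admissibility and dominance of $\La$, forces each $\nu^{(a)}$ into the stated shape by an induction on the distance from the fork, and that induction is the real content of the proof.
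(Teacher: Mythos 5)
Your proposal follows essentially the same route as the paper, which states this result as a direct consequence of Kleber's algorithm~\cite{Kleber:1998} and gives no further proof; the details you add --- reducing rigging-uniqueness to the vanishing of $p_i^{(a)}$ at the parts of $\nu^{(a)}$, and the second-difference identity together with $p_0^{(a)}=0$ and the stable value $\langle\La,\alpha_a^\vee\rangle$ used to verify admissibility --- are all correct. The one step you explicitly leave open (that Kleber's tree yields no admissible configuration beyond those listed) is precisely the step the paper also delegates wholesale to~\cite{Kleber:1998}, so there is no gap relative to the paper's own treatment.
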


\begin{example}
Let $b$ be the highest weight element of $B^{6,4}$ of type $D^{(1)}_8$
with weight $\varpi_4+2\varpi_2$. As a KN tableau
$b=\Yvcentermath1\young(111,222,3,4)$
and its complement within the $6\times 4$ rectangle has shape
$\Yvcentermath1\yng(4,4,3,3,1,1)$.
Then the rigged configuration corresponding to $b$ is as follows:
\begin{center}
\unitlength 9pt
\begin{picture}(50,6.5)(0,-0.5)
\put(1,5){$\Yboxdim9pt\yng(1)$}
\put(2.2,5.1){0}
\put(4,4){$\Yboxdim9pt\yng(1,1)$}
\put(5.2,5.1){0}
\put(5.2,4.1){0}
\put(7,3.1){$\Yboxdim9pt\yng(3,1,1)$}
\put(10.2,5.1){0}
\put(8.2,4.1){0}
\put(8.2,3.1){0}
\put(12,2.1){$\Yboxdim9pt\yng(3,3,1,1)$}
\put(15.2,5.1){0}
\put(15.2,4.1){0}
\put(13.2,3.1){0}
\put(13.2,2.1){0}
\put(17,1.1){$\Yboxdim9pt\yng(4,3,3,1,1)$}
\put(21.2,5.1){0}
\put(20.2,4.1){0}
\put(20.2,3.1){0}
\put(18.2,2.1){0}
\put(18.2,1.1){0}
\put(23,0.1){$\Yboxdim9pt\yng(4,4,3,3,1,1)$}
\put(27.2,5.1){0}
\put(27.2,4.1){0}
\put(26.2,3.1){0}
\put(26.2,2.1){0}
\put(24.2,1.1){0}
\put(24.2,0.1){0}
\put(29,3){$\Yboxdim9pt\yng(4,3,1)$}
\put(33.2,5.1){0}
\put(32.2,4.1){0}
\put(30.2,3.1){0}
\put(35,3){$\Yboxdim9pt\yng(4,3,1)$}
\put(39.2,5.1){0}
\put(38.2,4.1){0}
\put(36.2,3.1){0}
\end{picture}
\end{center}
Here we express the configuration $\nu^{(a)}$ by its Young diagram and
put riggings on the right of the corresponding rows.
In particular, $\nu^{(6)}$ coincides with the complement of the shape of $b$.
To obtain the configurations to the left one removes the top row one by one.
\end{example}

We now review the fact that the set of rigged configurations is also endowed with a classical crystal structure.

\begin{definition} \rm{(\cite[Definition 3.3]{S:2006})} \label{definition.rc crystal}
Let $L$ be a multiplicity array.
Define the set of \textbf{unrestricted rigged configurations} $\RC(L)$ 
as the set generated from the elements in $\RCb(L)$ by the application of  
the operators $e_a,f_a$ for $a\in \J$ defined as follows:
\begin{enumerate}
\item
Define $e_a(\nu,J)$ by removing a box from a string of length $k$ in
$(\nu,J)^{(a)}$ leaving all colabels fixed and increasing the new
label by one. Here $k$ is the length of the string with the smallest
negative rigging of smallest length. If no such string exists,
$e_a(\nu,J)$ is undefined. 
\item
Define $f_a(\nu,J)$ by adding a box to a string of length $k$ in
$(\nu,J)^{(a)}$ leaving all colabels fixed and decreasing the new
label by one. Here $k$ is the length of the string with the smallest
nonpositive rigging of largest length. If no such string exists,
add a new string of length one and label $-1$.
If the result is not a valid unrestricted rigged configuration (meaning that
all riggings are smaller than or equal to their corresponding vacancy numbers),
$f_a(\nu,J)$ is undefined.
\end{enumerate}
\end{definition}

It was shown in~\cite[Theorem 3.7]{S:2006} that the operators $f_a$ and $e_a$ for $a\in \J$ define a classical
crystal structure on the set of rigged configurations.

\begin{theorem} \label{theorem.crystal isomorphism}
Let $B^{r,s}$ be a Kirillov--Reshetikhin crystal of type $D_n^{(1)}$.
Then there is a $D_n$-crystal isomorphism $\iota_0$ between $B^{r,s}$ and $\RC(B^{r,s})$
\begin{equation}
	\iota_0 : B^{r,s} \cong \RC(B^{r,s}).
\end{equation}
\end{theorem}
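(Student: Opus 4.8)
The plan is to prove the isomorphism by matching the classical crystal decompositions of the two sides. By \cite[Theorem~3.7]{S:2006} the operators $e_a,f_a$ ($a\in\J$) of Definition~\ref{definition.rc crystal} equip $\RC(B^{r,s})$ with the structure of a $D_n$-crystal, whose weight function (dictated by \eqref{eq:conf}) is $\operatorname{wt}(\nu,J)=s\varpi_r-\sum_{a\in\J}|\nu^{(a)}|\,\alpha_a$, where $|\nu^{(a)}|$ denotes the number of boxes of $\nu^{(a)}$; in particular $\operatorname{wt}(\nu,J)=\lambda$ for $(\nu,J)\in\RCb(B^{r,s},\lambda)$. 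Since $B^{r,s}\cong\bigoplus_\lambda B(\lambda)$ by \eqref{equation.KR decomposition}, with the sum over partitions $\lambda$ obtained from $(s^r)$ by removing vertical dominoes and each term of multiplicity one, it suffices to show that $\RC(B^{r,s})$ decomposes in exactly the same way as a $D_n$-crystal and then to match classically highest weight vectors.

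First I would identify the classically highest weight elements of $\RC(B^{r,s})$. By Definition~\ref{definition.rc crystal}(i), $e_a(\nu,J)$ is defined precisely when $(\nu,J)^{(a)}$ contains a string of negative rigging, so $(\nu,J)$ is $\J$-highest weight if and only if every rigging in every $(\nu,J)^{(a)}$ is nonnegative. Together with the validity constraint (each rigging is at most its vacancy number) and the standard convexity property of the vacancy numbers $p_i^{(a)}$ as a function of $i$, this forces the underlying configuration to be admissible, so, as in \cite{S:2006}, the $\J$-highest weight elements of $\RC(B^{r,s})$ are exactly the elements of $\RCb(B^{r,s})$. By Proposition~\ref{proposition.kleber} these are in bijection with the partitions $\lambda$ obtained from $(s^r)$ by removing vertical dominoes, each occurring with multiplicity one, the corresponding element having all riggings zero and weight $\lambda$. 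This already matches the list of classically highest weights of $B^{r,s}$.

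It remains to show that the connected $D_n$-component of $\RC(B^{r,s})$ generated by the (unique) $\J$-highest weight element of weight $\lambda$ is isomorphic to the irreducible highest weight crystal $B(\lambda)$; this is the main obstacle, since a priori such a component could be some other $D_n$-crystal with a single highest weight vector of weight $\lambda$. I would handle this by verifying that $e_a,f_a$ satisfy Stembridge's local axioms for simply-laced crystals — which, $D_n$ being simply-laced, identifies each connected component with $B$ of its highest weight — or, alternatively, by reduction to the vector-representation case: since $r\le n-2$, each $\lambda$ has at most $n-2$ rows, so $B(\lambda)$ embeds into $B(\varpi_1)^{\otimes|\lambda|}$ as a $D_n$-crystal, and one uses the compatibility of the classical crystal structure on rigged configurations with the combinatorial bijection, for which the classical crystal isomorphism with Kirillov--Reshetikhin crystals is available in the type~$A$ case (\cite{KSS:2002,S:2006}) and for the type~$D$ vector representation (\cite{OSS:2003a}). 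Either way, combining with the previous paragraph yields $\RC(B^{r,s})\cong\bigoplus_\lambda B(\lambda)\cong B^{r,s}$, and $\iota_0$ is then the unique $D_n$-crystal map sending, for each admissible $\lambda$, the $\J$-highest weight rigged configuration of weight $\lambda$ to the $\J$-highest weight vector of the component $B(\lambda)\subset B^{r,s}$; the remaining work — tracking how strings and vacancy numbers transform under the $D_n$ Serre relations among $e_a,f_a$ — is exactly the content hidden in the Stembridge-axiom (or reduction) step, while everything else is formal once Proposition~\ref{proposition.kleber} is in hand.
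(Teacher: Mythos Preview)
Your overall strategy --- match the classical highest-weight decompositions using Proposition~\ref{proposition.kleber} and then invoke the crystal structure from \cite{S:2006} --- is exactly what the paper does. Two points, one minor and one a genuine omission.

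First, the step you call the ``main obstacle'' (showing that the $\J$-component through the highest-weight rigged configuration of weight $\lambda$ is really $B(\lambda)$ and not some other crystal with the same highest weight) is not left open by the reference: \cite[Theorem~3.7]{S:2006} already establishes that the crystal operators of Definition~\ref{definition.rc crystal} make each such component isomorphic to the irreducible highest-weight crystal $B(\lambda)$, not merely that they satisfy abstract crystal axioms. The paper therefore simply cites this and moves on. Your proposed routes (Stembridge's local axioms, or reduction to $B(\varpi_1)^{\otimes N}$) would both work in principle, but they re-prove what is already in the cited theorem.

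Second, and more substantively, you have only treated $1\le r\le n-2$: you invoke \eqref{equation.KR decomposition} for the decomposition of $B^{r,s}$, and in your final paragraph you explicitly assume $r\le n-2$ to embed $B(\lambda)$ into a tensor power of the vector representation. The theorem, however, is stated for all $1\le r\le n$, and the paper handles the spin cases $r=n-1,n$ separately. There one uses \eqref{equation.KR decomposition spin}, so $B^{r,s}\cong B(s\varpi_r)$ is classically irreducible, while Kleber's algorithm shows that $\RCb(B^{r,s})$ contains only the empty rigged configuration; then \cite[Theorem~3.7]{S:2006} again yields the isomorphism. Without this paragraph your proof is incomplete.
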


\begin{proof}
First assume that $1\le r \le n-2$. Comparing~\eqref{equation.KR decomposition} and Proposition~\ref{proposition.kleber}, 
there is a bijection between $D_n$-highest weight elements in $B^{r,s}$ and rigged configurations $\RCb(B^{r,s})$
which is unique since all weights have multiplicity one. By~\cite[Theorem 3.7]{S:2006} the corresponding 
classical crystal structures agree.

For $r=n-1,n$ we have $B^{r,s} \cong B(s\varpi_r)$ as $D_n$-crystals by~\eqref{equation.KR decomposition spin}.
The Kleber algorithm~\cite{Kleber:1998} shows that there is only the empty rigged configuration in $\RCb(B^{r,s})$.
Again, by~\cite[Theorem 3.7]{S:2006} this proves the claim.
\end{proof}

In the next section we show that the classical crystal isomorphism $\iota_0$ of Theorem~\ref{theorem.crystal isomorphism}
can in fact be extended to an affine crystal isomorphism.

\section{Affine crystal structure on rigged configurations}
\label{section.affine structure}

In this section we define an affine crystal structure on rigged configurations. This is achieved by using the  classical 
crystal structure of Definition~\ref{definition.rc crystal} and defining the analogue of $\sigma$ of Definition~\ref{definition.sigma}
on rigged configurations. Our main result is stated in Theorem~\ref{theorem.main}.

\subsection{$\pm$-diagrams on rigged configurations}
\label{section.pm for rc}

In this subsection we define rigged configurations associated to $\pm$-diagrams and show in Proposition~\ref{proposition.rc pm}
that they indeed correspond to $\{2,3,\ldots,n\}$-highest weight crystal elements.
Here we only consider $B^{r,s}$ of type $D_n^{(1)}$ with $1\le r\le n-2$.

For a given $\pm$-diagram of type $D_n$, the corresponding rigged configuration
is obtained by ``adding'' all the rigged configurations corresponding to the 
single columns of the $\pm$-diagram together.
Here ``adding" means concatenating the parts of all Young diagrams of the rigged configuration horizontally and 
summing up the corresponding riggings.

Hence in order to obtain the rigged configuration for a 
$\pm$-diagram of $B^{r,s}$, it is enough to know the following information.
Let $P$ be a single column $\pm$-diagram of height $x$ of the outer shape corresponding to a
$\{2,3,\ldots,n\}$-highest weight element in $B^{x+y,1}$, where $x+y=r$.
We remark that $y$ is always an even integer by the classical decomposition~\eqref{equation.KR decomposition}.
In order to display the rigged configuration, we represent a Young diagram by the sequence of lengths of rows like $(1^i)$ 
and if $i=0$ we regard them as the empty set.
We describe the riggings just below the corresponding rows.

\begin{enumerate}
\item[(A)] $P$ does not contain any sign.
\begin{align*}
\nu &=(\overbrace{(\hspace{2.5mm}1), (1),\cdots,(1)}^x,
\overbrace{(1),(1,1),\cdots,(1,\ldots,1)}^y,
(1^y),\cdots,(1^y),(1^{\frac{y}{2}}),(1^{\frac{y}{2}}))\\
J &=(\overbrace{(-1), (0),\cdots,(0)}^x,
\overbrace{(1),(0,0),\cdots,(0,\ldots,0)}^y,
(0^y),\cdots,(0^y),(0^{\frac{y}{2}}),(0^{\frac{y}{2}}))\\
\end{align*}

\item[(B)] $P$ contains $+$.
\begin{align*}
\nu &=(\overbrace{\emptyset, \emptyset,\cdots,\emptyset}^x,
\overbrace{(1),(1,1),\cdots,(1,\ldots,1)}^y,
(1^y),\cdots,(1^y),(1^{\frac{y}{2}}),(1^{\frac{y}{2}}))\\
J &=(\overbrace{\emptyset, \emptyset,\cdots,\emptyset}^x,
\overbrace{(0),(0,0),\cdots,(0,\ldots,0)}^y,
(0^y),\cdots,(0^y),(0^{\frac{y}{2}}),(0^{\frac{y}{2}}))\\
\end{align*}

\item[(C)] $P$ contains $-$.
\begin{align*}
\nu &=(\overbrace{(\hspace{2.5mm}2), (2),\cdots,(2)}^{x-1},
\overbrace{(1,1),(1,1,1),\cdots,(1,\ldots,1)}^{y+1},
(1^{y+2}),\cdots,(1^{y+2}),(1^{\frac{y+2}{2}}),(1^{\frac{y+2}{2}}))\\
J &=(\overbrace{(-2), (0),\cdots,(0)}^{x-1},
\overbrace{(0,0),(0,0,0),\cdots,(0,\ldots,0)}^{y+1},
(0^{y+2}),\cdots,(0^{y+2}),(0^{\frac{y+2}{2}}),(0^{\frac{y+2}{2}}))\\
\end{align*}

\item[(D)] $P$ contains $\pm$.
\begin{align*}
\nu &=(\overbrace{(\hspace{2.5mm}1), (1),\cdots,(1)}^{x-1},
\overbrace{(1,1),(1,1,1),\cdots,(1,\ldots,1)}^{y+1},
(1^{y+2}),\cdots,(1^{y+2}),(1^{\frac{y+2}{2}}),(1^{\frac{y+2}{2}}))\\
J &=(\overbrace{(-1), (0),\cdots,(0)}^{x-1},
\overbrace{(0,0),(0,0,0),\cdots,(0,\ldots,0)}^{y+1},
(0^{y+2}),\cdots,(0^{y+2}),(0^{\frac{y+2}{2}}),(0^{\frac{y+2}{2}}))\\
\end{align*}
\end{enumerate}
\noindent
Except for $\nu^{(x-1)}$ for the $\pm$ case, all rows are singular.
The empty $\pm$-diagram is regarded as a special case of the $+$ case.
If we have $x=1$ in the $-$ case, we take $(\nu^{(1)},J^{(1)})=((1,1),(-1,-1))$.

\begin{definition}\label{definition.pm rc}
Let us denote the rigged configuration obtained from a $\pm$-diagram $P$  for $B^{r,s}$ by the above
procedure by $\gammarc(P):=(\nu_P,J_P)$.
\end{definition}

\begin{example}
Consider the following element of $B^{8,5}$ of type $D^{(1)}_n$ $(n\geq 10)$:
$$P=\Yvcentermath1\young(\ku\ku\ku+,\ku\ku\ku-,\ku\ku\ku,\ku\ku\ku,\ku+,--)$$
Then the corresponding RC is as follows (the first line is $\nu_P$
and the second line is $J_P$):
\begin{align*}
((\hspace{2.5mm}6),(6,2),(6,2,2),(6,2,2,2),(6,2,2,2,2),(5,5,2,2,2,2),
(5,5,5,2,2,2,2),\cdots)\\
((-5),(0,0),(0,0,0),(0,0,0,0),(1,0,0,0,0),(0,0,0,0,0,0),
(0,0,0,0,0,0,0),\cdots)
\end{align*}
Take $\nu_P^{(5)}$ as an example. It is the concatenation (or ``sum'') of the five columns of $P$:
$$\Yvcentermath1\yng(2)\,\,0,\,\,
\yng(1)\,\,0,\,\,\yng(1)\,\,1,\,\,\yng(1,1,1,1,1)
\unitlength 11pt
\begin{picture}(1,2)
\multiput(0.3,-2.2)(0,1.05){5}{0}
\end{picture},\,\,\yng(1,1,1,1,1)
\unitlength 11pt
\begin{picture}(1,2)
\multiput(0.3,-2.2)(0,1.05){5}{0}
\end{picture}.$$
Here we put riggings on the right of the corresponding rows.
Adding these together we obtain
\begin{center}
\unitlength 11pt
\begin{picture}(7,5.3)
\put(0,0){\yng(6,2,2,2,2)}
\put(6.4,4.2){1}
\multiput(2.4,0.1)(0,1.05){4}{0}
\end{picture}.
\end{center}
\end{example}

\begin{proposition} \label{proposition.rc pm}
We have
\[
	\gammarc = \iota_0 \circ \gamma.
\]
\end{proposition}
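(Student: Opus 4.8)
The plan is to prove $\gammarc(P)=\iota_0(\gamma(P))$ by induction following the recursive description of $\gamma$ in Proposition~\ref{proposition.gamma}. The observation that makes the induction go through is that $\iota_0$ is a $D_n$-crystal isomorphism (Theorem~\ref{theorem.crystal isomorphism}), hence commutes with $f_a$ and $e_a$ for all $a\in\J$. Applying $\iota_0$ to the two cases of Proposition~\ref{proposition.gamma} therefore shows that $\iota_0\circ\gamma$ satisfies exactly the same recursion: for $P'$ as in Case~1 one has $\iota_0(\gamma(P))=f_1f_2\cdots f_h\,\iota_0(\gamma(P'))$, and for $P'$ as in Case~2 one has $\iota_0(\gamma(P))=f_1f_2\cdots f_nf_{n-2}f_{n-3}\cdots f_h\,\iota_0(\gamma(P'))$. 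One also notes that in both cases the column that is modified keeps its outer height -- a column with no sign becomes a ``$+$'' column of the same height, and the column containing the removed $-$ becomes a ``$+$'' column of the same outer height -- so $P'$ again has the same outer shape as $P$ and is a $\pm$-diagram for $B^{r,s}$, making $\gammarc(P')$ well defined. Thus the proposition reduces to the base case together with the claim that $\gammarc$ obeys these same two recursions.

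For the base case, the terminal $\pm$-diagrams of the recursion are precisely those with a $+$ in every column and no $-$; by Proposition~\ref{proposition.gamma} such a $P$ of outer shape $\lambda$ satisfies $\gamma(P)$ equal to the highest weight vector of $B(\lambda)\subset B^{r,s}$, whose image under $\iota_0$ is the unique element of $\RCb(B^{r,s},\lambda)$ written down explicitly in Proposition~\ref{proposition.kleber}. On the other hand $\gammarc(P)$ is the ``sum'' over the columns of $\lambda$ of the atomic rigged configurations of type~(B); since all riggings of type~(B) vanish, it only remains to check that concatenating the shapes $\nu^{(a)}$ of these atomic configurations reproduces the partitions $\overline{\lambda}^{[r-a]}$, $\overline{\lambda}$ and $\overline{\lambda}'$ of Proposition~\ref{proposition.kleber}, which is a direct computation in terms of the column heights of $\lambda$.

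For the inductive step one uses that $\gammarc$ is additive over columns, so that $\gammarc(P)$ and $\gammarc(P')$ differ only in the contribution of the single modified column: between type~(B) and type~(A) in Case~1, and between type~(B) and type~(D) (or the special case $x=1$ of type~(C)) in Case~2. One then verifies directly, from Definition~\ref{definition.rc crystal}, that the prescribed word of Kashiwara operators applied to $\gammarc(P')$ effects precisely this change. A box count already confirms consistency: in Case~1 the word $f_1\cdots f_h$ adds one box to each of $\nu^{(1)},\dots,\nu^{(h)}$, matching the difference of the type-(A) and type-(B) shapes; in Case~2 the word adds two boxes to each of $\nu^{(h)},\dots,\nu^{(n-2)}$ and one box to each of $\nu^{(1)},\dots,\nu^{(h-1)}$ and to $\nu^{(n-1)}$ and $\nu^{(n)}$, matching the difference of the type-(D) and type-(B) shapes. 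What makes the detailed check feasible is that every row of the atomic configurations of types (A)--(D) is singular except for $\nu^{(x-1)}$ in the $\pm$ case, so that at each step $f_a$ selects the string coming from the modified column (the other columns contribute singular rows of nonnegative rigging, which are not selected), while the ``leaving all colabels fixed'' rule of Definition~\ref{definition.rc crystal} propagates the negative rigging created by $f_a$ along the chain and turns it into the prescribed zero (and, in one spot, $+1$) riggings of the target.

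The step I expect to be the main obstacle is the verification in Case~2: the long word $f_1f_2\cdots f_nf_{n-2}f_{n-3}\cdots f_h$ has to be pushed through the rigged configuration letter by letter, tracking which string is selected and how the vacancy numbers evolve, and in particular analyzing how the word threads through the fork of the $D_n$ Dynkin diagram at which the nodes $n-1$ and $n$ are both attached to $n-2$. A further subtlety is that the single-column atomic pieces are constructed as elements of $\RC(B^{r,1})$ whereas their sum must be read inside $\RC(B^{r,s})$, where the vacancy numbers of short rows can be smaller; one has to check that the riggings still do not exceed them, though this becomes automatic once $\gammarc=\iota_0\circ\gamma$ is established, since the right-hand side lies in $\RC(B^{r,s})$ by construction. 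Organizing the Case~2 computation according to the value of $h$ (the case $h=1$, the generic case $2\le h\le n-2$, and the boundary behaviour at $n-1$, $n$) then completes the induction.
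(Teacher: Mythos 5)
Your proposal follows essentially the same route as the paper's proof: reduce the statement, via the additivity of $\gammarc$ over columns and the fact that $\iota_0$ commutes with the classical crystal operators, to showing that $\gammarc$ obeys the column-wise recursion of Proposition~\ref{proposition.gamma} (the paper states this in the equivalent form $\gammarc(P')=e_h\cdots e_1\gammarc(P)$, resp.\ $\gammarc(P')=e_h\cdots e_{n-2}e_ne_{n-1}\cdots e_1\gammarc(P)$), anchor the induction at the all-$+$ diagram using Proposition~\ref{proposition.kleber}, and verify the inductive step by tracking which strings the operators select, exploiting that the atomic configurations are singular in all rows except the one you single out. Your box counts are correct and your identification of the main technical burden (threading the Case~2 word through the fork at $n-2$) matches where the paper spends its effort. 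The one concrete omission is in your enumeration of Case~1: the rightmost column where a $+$ can be added need not be sign-free, since it may contain a single $-$ (the column then has outer height $h+1$ and the $+$ is inserted at height $h$), so the modified column toggles between types (C) and (D), not only between (A) and (B). This subcase requires its own, parallel, verification --- the paper's Case~1(b) --- and it is precisely the case where the non-singular row in $\nu^{(x-1)}$ of the $\pm$-atom enters the string-selection analysis; your plan as written does not cover it, though the same mechanism goes through.
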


\begin{proof}
To prove the claim we show that the combinatorially defined map $\gammarc$ on rigged configurations of 
Definition~\ref{definition.pm rc} follows the same inductive definition as $\gamma$ as given in
Proposition~\ref{proposition.gamma}. In fact we prove the equivalent property that 
\[
	\gammarc(P') = \begin{cases}
	e_h e_{h-1} \cdots e_1 \gammarc(P) & \text{for Case 1 of Proposition~\ref{proposition.gamma},}\\
	e_h e_{h+1} \cdots e_{n-2} e_n e_{n-1} \cdots e_1 \gammarc(P) & \text{for Case 2 of Proposition~\ref{proposition.gamma},}
	\end{cases}
\]
where $h$ is the height of the added $+$ in Case 1 and the removed $-$ in Case 2.

\noindent \textbf{Case 1:}
There are two cases. Let $c$ be the rightmost column, where a $+$ can be added.
\begin{itemize}
\item[(a)] $c$ does not contain signs.
\item[(b)] $c$ contains only $-$.
\end{itemize}
Note that for (a) (resp. (b)) the height of $c$ is $h$ (resp. $h+1$).

We first treat case (a). From the rules in Section~\ref{section.pm for rc} the map $\gammarc(P)$ has the
following features:
\begin{itemize}
\item[(i)] $\nu^{(1)}=(N),J^{(1)}=(-N)$ for some positive integer $N$.
\item[(ii)] $J^{(a)}_j=0$ for $2\le a\le h$ and $j\ge1$.
\item[(iii)] $\nu^{(a)}_1\le\nu^{(a+1)}_1$ for $1\le a\le h$.
\item[(iv)] $\nu^{(a)}_1>\nu^{(a+1)}_j$ for $1\le a\le h$ and $j\ge2$.
\item[(v)] $J^{(h+1)}_j=m\cdot \chi(j=1)$ for any $j$, where $m$ is the number of columns without sign of height $h$.
\end{itemize}
In the above, $J^{(a)}_i$ stands for the rigging of the $i$-th row in $(\nu,J)^{(a)}$ and 
\begin{equation} \label{equation.chi}
	\chi(S) = \begin{cases} 1 & \text{if the statement $S$ is true,}\\
		0 & \text{if the statement $S$ is false.}
	\end{cases}
\end{equation}
{}From Definition~\ref{definition.rc crystal}, $e_1\gammarc(P)$ differs from $\gammarc(P)$ by removing a box
from the first row of $\nu^{(1)}$, increasing $J^{(1)}_1$ by one and decreasing
$J^{(2)}_1$ by one. Applying $e_2,\ldots,e_h$ similarly, one recognizes that
$e_h\cdots e_1\gammarc(P)$ differs from $\gammarc(P)$ by removing a
box from the first row of $\nu^{(a)}$ for $1\le a\le h$, increasing $J^{(1)}_1$ by
one and decreasing $J^{(h+1)}_1$ by one. This is exactly the difference between Case (A) and (B)
in Section~\ref{section.pm for rc} in which $\gammarc(P)$ and $\gammarc(P')$ differ.

The proof of case (b) goes similarly. The features of $\gammarc(P)$ differ
from case (a) in
\begin{itemize}
\item[(iii')] $\nu^{(a)}_1\le\nu^{(a+1)}_1$ for $1\le a\le h-1$, and $\nu^{(h)}_1=\nu^{(h+1)}_1+m$, where $m$ is the number
of columns with only $-$ of height $h+1$.
\item[(v')] $J^{(h+1)}_j=0$ for all $j$.
\end{itemize}
In this case the difference between $\gammarc(P)$ and $\gammarc(P')$ is exactly the difference between Case (C)
and (D) in Section~\ref{section.pm for rc}.

\noindent \textbf{Case 2:} Since no $+$ can be added to $P$, every column either contains a $+$ or is of height 1 and contains
a $-$. Hence we only encounter Cases (B), (D), or Case (C) with $x=1$ of Section~\ref{section.pm for rc}. The constructed 
rigged configuration $(\nu, J) = \gammarc(P)$ has the following property:
\begin{enumerate}
\item[(i)] $\nu^{(1)} = (N,L)$, $J^{(1)} = (-N,-L)$, where $N$ is the total number of $-$ in $P$ and $L$ is the number of $-$ at
height one.
\item[(ii)] $J^{(a)}_j = 0$ for $a\ge 2$ and all $j$.
\item[(iii)] $\nu_1^{(a)} \le \nu_1^{(a+1)}$ for $1\le a<h$.
\item[(iv)] $\nu_1^{(a)}>\nu_j^{(a+1)}$ for $1\le a<h-1$, $j\ge 2$ and $\nu_1^{(h)}>\nu_j^{(h+1)}$ for $j\ge 3$.
\item[(v)] $\nu^{(a)}$ for $h\le a\le n-2$ has at least two parts of length $M:=\nu_1^{(h-1)}$.
\item[(vi)] $\nu^{(n-1)}$ and $\nu^{(n)}$ have at least one part of length $M$.
\end{enumerate}
As before, conditions (i) to (iv) ensure that $e_h \cdots e_1$ always remove a box from the largest part in $\nu^{(a)}$ for
$1\le a\le h$. The next $e_n \cdots e_{h+1}$ remove a box from the parts of length $M$ in $\nu^{(a)}$ for $h<a\le n$.
Since by condition (v) there are at least two parts of length $M$ in $\nu^{(a)}$ for $h\le a\le n-2$, the following
$e_h e_{h+1} \cdots e_{n-2}$ pick the second part of length $M$. Then it is not hard to check that Case (D)
(or Case (C) with $x=1$) of Section~\ref{section.pm for rc} turns into Case (B), which is precisely the difference between
$\gammarc(P)$ and $\gammarc(P')$.
\end{proof}

\begin{example} \label{example.pm}
Take $\pm$-diagrams $P,P'$ corresponding to $\{2,3,4,5,6\}$-highest weight elements in $B^{4,3}$ of type $D_6^{(1)}$ 
as follows
$$P=\,\Yvcentermath1\young(\ku\ku+,\ku+-,\ku,-)\, ,\qquad
P'=\,\Yvcentermath1\young(\ku\ku+,\ku+-,+,-)\, .$$
Then we are in Case 1 of Proposition~\ref{proposition.gamma} since $P$ is obtained from $P'$ by removing a $+$
in the first column at height 3. Hence $\gammarc (P')=e_3e_2e_1\gammarc (P)$ which we can compute explicitly as follows:
\begin{center}
\unitlength 10pt
\begin{picture}(37,4)(0.5,0)
\put(0.5,3.1){$-2$}
\put(2.0,3){$\Yboxdim10pt\yng(3)$}
\put(5.1,3.1){$-3$}
\put(7,3.1){0}
\put(7,2.1){0}
\put(7.9,2){$\Yboxdim10pt\yng(3,1)$}
\put(9.2,2.1){0}
\put(11.2,3.1){0}
\put(13,3.1){0}
\put(13,2.1){0}
\put(13,1.1){0}
\put(13.9,1){$\Yboxdim10pt\yng(4,1,1)$}
\put(15.2,1.1){0}
\put(15.2,2.1){0}
\put(18.2,3.1){0}
\put(20,0.1){0}
\put(20,1.1){0}
\put(20,2.1){0}
\put(20,3.1){0}
\put(20.9,0){$\Yboxdim10pt\yng(3,3,1,1)$}
\put(22.2,0.1){0}
\put(22.2,1.1){0}
\put(24.2,2.1){0}
\put(24.2,3.1){0}
\put(26,2.1){0}
\put(26,3.1){0}
\put(26.9,2){$\Yboxdim10pt\yng(3,1)$}
\put(28.2,2.1){0}
\put(30.2,3.1){0}
\put(32,2.1){0}
\put(32,3.1){0}
\put(32.9,2){$\Yboxdim10pt\yng(3,1)$}
\put(34.2,2.1){0}
\put(36.2,3.1){0}
\end{picture}
\end{center}
\vspace{-3mm}

\begin{center}
\unitlength 10pt
\begin{picture}(1,2)
\put(-0.2,1.0){$e_1$}
\put(1,2){\vector(0,-1){1.8}}
\end{picture}
\end{center}
\vspace{-7mm}

\begin{center}
\unitlength 10pt
\begin{picture}(37,4)(0.5,0)
\put(0.5,3.1){$-1$}
\put(2.0,3){$\Yboxdim10pt\yng(2)$}
\put(4.1,3.1){$-2$}
\put(6.3,3.1){$-1$}
\put(7,2.1){0}
\put(7.9,2){$\Yboxdim10pt\yng(3,1)$}
\put(9.2,2.1){0}
\put(11.0,3.1){$-1$}
\put(13,3.1){0}
\put(13,2.1){0}
\put(13,1.1){0}
\put(13.9,1){$\Yboxdim10pt\yng(4,1,1)$}
\put(15.2,1.1){0}
\put(15.2,2.1){0}
\put(18.2,3.1){0}
\put(20,0.1){0}
\put(20,1.1){0}
\put(20,2.1){0}
\put(20,3.1){0}
\put(20.9,0){$\Yboxdim10pt\yng(3,3,1,1)$}
\put(22.2,0.1){0}
\put(22.2,1.1){0}
\put(24.2,2.1){0}
\put(24.2,3.1){0}
\put(26,2.1){0}
\put(26,3.1){0}
\put(26.9,2){$\Yboxdim10pt\yng(3,1)$}
\put(28.2,2.1){0}
\put(30.2,3.1){0}
\put(32,2.1){0}
\put(32,3.1){0}
\put(32.9,2){$\Yboxdim10pt\yng(3,1)$}
\put(34.2,2.1){0}
\put(36.2,3.1){0}
\end{picture}
\end{center}
\vspace{-3mm}

\begin{center}
\unitlength 10pt
\begin{picture}(1,2)
\put(-0.2,1.0){$e_2$}
\put(1,2){\vector(0,-1){1.8}}
\end{picture}
\end{center}
\vspace{-7mm}

\begin{center}
\unitlength 10pt
\begin{picture}(37,4)(0.5,0)
\put(0.5,3.1){$-1$}
\put(2.0,3){$\Yboxdim10pt\yng(2)$}
\put(4.1,3.1){$-2$}
\put(7,3.1){0}
\put(7,2.1){0}
\put(7.9,2){$\Yboxdim10pt\yng(2,1)$}
\put(9.2,2.1){0}
\put(10.2,3.1){0}
\put(12.3,3.1){$-1$}
\put(13,2.1){0}
\put(13,1.1){0}
\put(13.9,1){$\Yboxdim10pt\yng(4,1,1)$}
\put(15.2,1.1){0}
\put(15.2,2.1){0}
\put(18.0,3.1){$-1$}
\put(20,0.1){0}
\put(20,1.1){0}
\put(20,2.1){0}
\put(20,3.1){0}
\put(20.9,0){$\Yboxdim10pt\yng(3,3,1,1)$}
\put(22.2,0.1){0}
\put(22.2,1.1){0}
\put(24.2,2.1){0}
\put(24.2,3.1){0}
\put(26,2.1){0}
\put(26,3.1){0}
\put(26.9,2){$\Yboxdim10pt\yng(3,1)$}
\put(28.2,2.1){0}
\put(30.2,3.1){0}
\put(32,2.1){0}
\put(32,3.1){0}
\put(32.9,2){$\Yboxdim10pt\yng(3,1)$}
\put(34.2,2.1){0}
\put(36.2,3.1){0}
\end{picture}
\end{center}
\vspace{-3mm}

\begin{center}
\unitlength 10pt
\begin{picture}(1,2)
\put(-0.2,1.0){$e_3$}
\put(1,2){\vector(0,-1){1.8}}
\end{picture}
\end{center}
\vspace{-7mm}

\begin{center}
\unitlength 10pt
\begin{picture}(37,4)(0.5,0)
\put(0.5,3.1){$-1$}
\put(2.0,3){$\Yboxdim10pt\yng(2)$}
\put(4.1,3.1){$-2$}
\put(7,3.1){0}
\put(7,2.1){0}
\put(7.9,2){$\Yboxdim10pt\yng(2,1)$}
\put(9.2,2.1){0}
\put(10.2,3.1){0}
\put(13,3.1){1}
\put(13,2.1){0}
\put(13,1.1){0}
\put(13.9,1){$\Yboxdim10pt\yng(3,1,1)$}
\put(15.2,1.1){0}
\put(15.2,2.1){0}
\put(17.0,3.1){0}
\put(20,0.1){0}
\put(20,1.1){0}
\put(20,2.1){0}
\put(20,3.1){0}
\put(20.9,0){$\Yboxdim10pt\yng(3,3,1,1)$}
\put(22.2,0.1){0}
\put(22.2,1.1){0}
\put(24.2,2.1){0}
\put(24.2,3.1){0}
\put(26,2.1){0}
\put(26,3.1){0}
\put(26.9,2){$\Yboxdim10pt\yng(3,1)$}
\put(28.2,2.1){0}
\put(30.2,3.1){0}
\put(32,2.1){0}
\put(32,3.1){0}
\put(32.9,2){$\Yboxdim10pt\yng(3,1)$}
\put(34.2,2.1){0}
\put(36.2,3.1){0}
\end{picture}
\end{center}
Here we put riggings (resp. vacancy numbers) on the right (resp. left)
of the corresponding rows.
\end{example}

\begin{example}
Continuing Example~\ref{example.pm} we take in addition
$$P''=\,\Yvcentermath1\young(\ku\ku+,\ku+-,\ku,+)\, .$$
Then we are in Case 2 of Proposition~\ref{proposition.gamma} since $P''$ is obtained from $P'$ by removing a $-$ in the
first column at height 4. Hence $\gammarc (P'')=e_4e_6e_5e_4e_3e_2e_1\gammarc (P')$ which can be checked explicitly
as follows:
\begin{center}
\unitlength 10pt
\begin{picture}(34,4)(0.5,0)
\put(0.5,3.1){$-1$}
\put(2.0,3){$\Yboxdim10pt\yng(2)$}
\put(4.1,3.1){$-2$}
\put(6,3.1){0}
\put(6,2.1){0}
\put(6.9,2){$\Yboxdim10pt\yng(2,1)$}
\put(8.2,2.1){0}
\put(9.2,3.1){0}
\put(11,3.1){1}
\put(11,2.1){0}
\put(11,1.1){0}
\put(11.9,1){$\Yboxdim10pt\yng(3,1,1)$}
\put(13.2,1.1){0}
\put(13.2,2.1){0}
\put(15.2,3.1){0}
\put(17,0.1){0}
\put(17,1.1){0}
\put(17,2.1){0}
\put(17,3.1){0}
\put(17.9,0){$\Yboxdim10pt\yng(3,3,1,1)$}
\put(19.2,0.1){0}
\put(19.2,1.1){0}
\put(21.2,2.1){0}
\put(21.2,3.1){0}
\put(23,2.1){0}
\put(23,3.1){0}
\put(23.9,2){$\Yboxdim10pt\yng(3,1)$}
\put(25.2,2.1){0}
\put(27.2,3.1){0}
\put(29,2.1){0}
\put(29,3.1){0}
\put(29.9,2){$\Yboxdim10pt\yng(3,1)$}
\put(31.2,2.1){0}
\put(33.2,3.1){0}
\end{picture}
\end{center}

\begin{center}
\unitlength 10pt
\begin{picture}(1,2)
\put(-0.2,1.0){$e_1$}
\put(1,2){\vector(0,-1){2}}
\end{picture}
\end{center}

\begin{center}
\unitlength 10pt
\begin{picture}(34,4)(0.5,0)
\put(1.1,3.1){$0$}
\put(2.0,3){$\Yboxdim10pt\yng(1)$}
\put(3.1,3.1){$-1$}
\put(5.3,3.1){$-1$}
\put(6,2.1){0}
\put(6.9,2){$\Yboxdim10pt\yng(2,1)$}
\put(8.2,2.1){0}
\put(9.1,3.1){$-1$}
\put(11,3.1){1}
\put(11,2.1){0}
\put(11,1.1){0}
\put(11.9,1){$\Yboxdim10pt\yng(3,1,1)$}
\put(13.2,1.1){0}
\put(13.2,2.1){0}
\put(15.2,3.1){0}
\put(17,0.1){0}
\put(17,1.1){0}
\put(17,2.1){0}
\put(17,3.1){0}
\put(17.9,0){$\Yboxdim10pt\yng(3,3,1,1)$}
\put(19.2,0.1){0}
\put(19.2,1.1){0}
\put(21.2,2.1){0}
\put(21.2,3.1){0}
\put(23,2.1){0}
\put(23,3.1){0}
\put(23.9,2){$\Yboxdim10pt\yng(3,1)$}
\put(25.2,2.1){0}
\put(27.2,3.1){0}
\put(29,2.1){0}
\put(29,3.1){0}
\put(29.9,2){$\Yboxdim10pt\yng(3,1)$}
\put(31.2,2.1){0}
\put(33.2,3.1){0}
\end{picture}
\end{center}

\begin{center}
\unitlength 10pt
\begin{picture}(1,2)
\put(-0.2,1.0){$e_2$}
\put(1,2){\vector(0,-1){2}}
\end{picture}
\end{center}

\begin{center}
\unitlength 10pt
\begin{picture}(34,4)(0.5,0)
\put(1.1,3.1){$0$}
\put(2.0,3){$\Yboxdim10pt\yng(1)$}
\put(3.1,3.1){$-1$}
\put(6.0,3.1){0}
\put(6.0,2.1){0}
\put(6.9,2){$\Yboxdim10pt\yng(1,1)$}
\put(8.2,2.1){0}
\put(8.2,3.1){0}
\put(11,3.1){0}
\put(11,2.1){0}
\put(11,1.1){0}
\put(11.9,1){$\Yboxdim10pt\yng(3,1,1)$}
\put(13.2,1.1){0}
\put(13.2,2.1){0}
\put(15.0,3.1){$-1$}
\put(17,0.1){0}
\put(17,1.1){0}
\put(17,2.1){0}
\put(17,3.1){0}
\put(17.9,0){$\Yboxdim10pt\yng(3,3,1,1)$}
\put(19.2,0.1){0}
\put(19.2,1.1){0}
\put(21.2,2.1){0}
\put(21.2,3.1){0}
\put(23,2.1){0}
\put(23,3.1){0}
\put(23.9,2){$\Yboxdim10pt\yng(3,1)$}
\put(25.2,2.1){0}
\put(27.2,3.1){0}
\put(29,2.1){0}
\put(29,3.1){0}
\put(29.9,2){$\Yboxdim10pt\yng(3,1)$}
\put(31.2,2.1){0}
\put(33.2,3.1){0}
\end{picture}
\end{center}

\begin{center}
\unitlength 10pt
\begin{picture}(1,2)
\put(-0.2,1.0){$e_3$}
\put(1,2){\vector(0,-1){2}}
\end{picture}
\end{center}

\begin{center}
\unitlength 10pt
\begin{picture}(34,4)(0.5,0)
\put(1.1,3.1){$0$}
\put(2.0,3){$\Yboxdim10pt\yng(1)$}
\put(3.1,3.1){$-1$}
\put(6.0,3.1){0}
\put(6.0,2.1){0}
\put(6.9,2){$\Yboxdim10pt\yng(1,1)$}
\put(8.2,2.1){0}
\put(8.2,3.1){0}
\put(11,3.1){0}
\put(11,2.1){0}
\put(11,1.1){0}
\put(11.9,1){$\Yboxdim10pt\yng(2,1,1)$}
\put(13.2,1.1){0}
\put(13.2,2.1){0}
\put(14.2,3.1){0}
\put(17,0.1){0}
\put(17,1.1){0}
\put(16.2,2.1){$-1$}
\put(16.2,3.1){$-1$}
\put(17.9,0){$\Yboxdim10pt\yng(3,3,1,1)$}
\put(19.2,0.1){0}
\put(19.2,1.1){0}
\put(21.0,2.1){$-1$}
\put(21.0,3.1){$-1$}
\put(23,2.1){0}
\put(23,3.1){0}
\put(23.9,2){$\Yboxdim10pt\yng(3,1)$}
\put(25.2,2.1){0}
\put(27.2,3.1){0}
\put(29,2.1){0}
\put(29,3.1){0}
\put(29.9,2){$\Yboxdim10pt\yng(3,1)$}
\put(31.2,2.1){0}
\put(33.2,3.1){0}
\end{picture}
\end{center}

\begin{center}
\unitlength 10pt
\begin{picture}(1,2)
\put(-0.2,1.0){$e_4$}
\put(1,2){\vector(0,-1){2}}
\end{picture}
\end{center}

\begin{center}
\unitlength 10pt
\begin{picture}(34,4)(0.5,0)
\put(1.1,3.1){$0$}
\put(2.0,3){$\Yboxdim10pt\yng(1)$}
\put(3.1,3.1){$-1$}
\put(6.0,3.1){0}
\put(6.0,2.1){0}
\put(6.9,2){$\Yboxdim10pt\yng(1,1)$}
\put(8.2,2.1){0}
\put(8.2,3.1){0}
\put(11,3.1){0}
\put(11,2.1){0}
\put(11,1.1){0}
\put(11.9,1){$\Yboxdim10pt\yng(2,1,1)$}
\put(13.2,1.1){0}
\put(13.2,2.1){0}
\put(14.2,3.1){0}
\put(17,0.1){0}
\put(17,1.1){0}
\put(17,2.1){0}
\put(17,3.1){1}
\put(17.9,0){$\Yboxdim10pt\yng(3,2,1,1)$}
\put(19.2,0.1){0}
\put(19.2,1.1){0}
\put(20.2,2.1){0}
\put(21.2,3.1){1}
\put(23,2.1){0}
\put(22.2,3.1){$-1$}
\put(23.9,2){$\Yboxdim10pt\yng(3,1)$}
\put(25.2,2.1){0}
\put(26.9,3.1){$-1$}
\put(29.2,2.1){0}
\put(28.4,3.1){$-1$}
\put(29.9,2){$\Yboxdim10pt\yng(3,1)$}
\put(31.2,2.1){0}
\put(33.2,3.1){$-1$}
\end{picture}
\end{center}

\begin{center}
\unitlength 10pt
\begin{picture}(1,2)
\put(-0.2,1.0){$e_5$}
\put(1,2){\vector(0,-1){2}}
\end{picture}
\end{center}

\begin{center}
\unitlength 10pt
\begin{picture}(34,4)(0.5,0)
\put(1.1,3.1){$0$}
\put(2.0,3){$\Yboxdim10pt\yng(1)$}
\put(3.1,3.1){$-1$}
\put(6.0,3.1){0}
\put(6.0,2.1){0}
\put(6.9,2){$\Yboxdim10pt\yng(1,1)$}
\put(8.2,2.1){0}
\put(8.2,3.1){0}
\put(11,3.1){0}
\put(11,2.1){0}
\put(11,1.1){0}
\put(11.9,1){$\Yboxdim10pt\yng(2,1,1)$}
\put(13.2,1.1){0}
\put(13.2,2.1){0}
\put(14.2,3.1){0}
\put(17,0.1){0}
\put(17,1.1){0}
\put(17,2.1){0}
\put(17,3.1){0}
\put(17.9,0){$\Yboxdim10pt\yng(3,2,1,1)$}
\put(19.2,0.1){0}
\put(19.2,1.1){0}
\put(20.2,2.1){0}
\put(21.2,3.1){0}
\put(23,2.1){0}
\put(23.0,3.1){0}
\put(23.9,2){$\Yboxdim10pt\yng(2,1)$}
\put(25.2,2.1){0}
\put(26.2,3.1){0}
\put(29.2,2.1){0}
\put(28.4,3.1){$-1$}
\put(29.9,2){$\Yboxdim10pt\yng(3,1)$}
\put(31.2,2.1){0}
\put(33.2,3.1){$-1$}
\end{picture}
\end{center}

\begin{center}
\unitlength 10pt
\begin{picture}(1,2)
\put(-0.2,1.0){$e_6$}
\put(1,2){\vector(0,-1){2}}
\end{picture}
\end{center}

\begin{center}
\unitlength 10pt
\begin{picture}(34,4)(0.5,0)
\put(1.1,3.1){$0$}
\put(2.0,3){$\Yboxdim10pt\yng(1)$}
\put(3.1,3.1){$-1$}
\put(6.0,3.1){0}
\put(6.0,2.1){0}
\put(6.9,2){$\Yboxdim10pt\yng(1,1)$}
\put(8.2,2.1){0}
\put(8.2,3.1){0}
\put(11,3.1){0}
\put(11,2.1){0}
\put(11,1.1){0}
\put(11.9,1){$\Yboxdim10pt\yng(2,1,1)$}
\put(13.2,1.1){0}
\put(13.2,2.1){0}
\put(14.2,3.1){0}
\put(17,0.1){0}
\put(17,1.1){0}
\put(17,2.1){0}
\put(16.2,3.1){$-1$}
\put(17.9,0){$\Yboxdim10pt\yng(3,2,1,1)$}
\put(19.2,0.1){0}
\put(19.2,1.1){0}
\put(20.2,2.1){0}
\put(21.0,3.1){$-1$}
\put(23,2.1){0}
\put(23.0,3.1){0}
\put(23.9,2){$\Yboxdim10pt\yng(2,1)$}
\put(25.2,2.1){0}
\put(26.2,3.1){0}
\put(29.2,2.1){0}
\put(29.2,3.1){0}
\put(29.9,2){$\Yboxdim10pt\yng(2,1)$}
\put(31.2,2.1){0}
\put(32.2,3.1){0}
\end{picture}
\end{center}

\begin{center}
\unitlength 10pt
\begin{picture}(1,2)
\put(-0.2,1.0){$e_4$}
\put(1,2){\vector(0,-1){2}}
\end{picture}
\end{center}

\begin{center}
\unitlength 10pt
\begin{picture}(34,4)(0.5,0)
\put(1.1,3.1){$0$}
\put(2.0,3){$\Yboxdim10pt\yng(1)$}
\put(3.1,3.1){$-1$}
\put(6.0,3.1){0}
\put(6.0,2.1){0}
\put(6.9,2){$\Yboxdim10pt\yng(1,1)$}
\put(8.2,2.1){0}
\put(8.2,3.1){0}
\put(11,3.1){0}
\put(11,2.1){0}
\put(11,1.1){0}
\put(11.9,1){$\Yboxdim10pt\yng(2,1,1)$}
\put(13.2,1.1){0}
\put(13.2,2.1){0}
\put(14.2,3.1){0}
\put(17,0.1){0}
\put(17,1.1){0}
\put(17,2.1){0}
\put(17,3.1){0}
\put(17.9,0){$\Yboxdim10pt\yng(2,2,1,1)$}
\put(19.2,0.1){0}
\put(19.2,1.1){0}
\put(20.2,2.1){0}
\put(20.2,3.1){0}
\put(23,2.1){0}
\put(23.0,3.1){0}
\put(23.9,2){$\Yboxdim10pt\yng(2,1)$}
\put(25.2,2.1){0}
\put(26.2,3.1){0}
\put(29.2,2.1){0}
\put(29.2,3.1){0}
\put(29.9,2){$\Yboxdim10pt\yng(2,1)$}
\put(31.2,2.1){0}
\put(32.2,3.1){0}
\end{picture}
\end{center}
\end{example}

Proposition \ref{proposition.rc pm} asserts that $\gammarc$ is a bijection.
Suppose a rigged configuration $(\nu,J)$ is given which is $\{2,\ldots,n\}$-highest weight. We give an algorithm 
to obtain the $\pm$-diagram $P=\gammarc^{-1}(\nu,J)$.
Recall from Section~\ref{section.sigma} that for $0\le h\le r$ the symbols $c_.(h),c_+(h),c_-(h),c_\pm(h)$ denote
the number of columns in $P$ of outer height $h$ with no sign, $+$, $-$, $\pm$, respectively.
Recall that for $r$ even (resp. odd) only even (resp. odd) values for $h$ exist.
The variables $c_a(h)$ for $a=\cdot,+,-,\pm$ are calculated inductively from $h=0$ ($r$ even) or $h=1$ ($r$ odd) 
to $h=r$ as follows:
\begin{align*}
c_.(h)& = \begin{cases}
	J^{(h+1)}_1+\chi(h=0)\nu^{(1)}_1 &(0\le h< r)\\
	\nu^{(r)}_1-\nu^{(r+1)}_1&(h=r)
	\end{cases}\\
c_+(h)& = \nu^{(h+1)}_1-\nu^{(h)}_1\qquad(1\le h<r)\\ 
c_-(h)&= \begin{cases}
	\nu^{(1)}_2&(h=1)\\ 
	\nu^{(h-1)}_1-\nu^{(h)}_1&(1<h\le r)
	\end{cases}\\
c_\pm(h)&=\sum_{j=1}^2(\nu^{(h)}_j-\nu^{(h-1)}_j)-(c_.(h-2)+c_+(h-2))\quad(2\le h\le r)
\end{align*}
where we have set $c_+(0)=0$ and used~\eqref{equation.chi} for the definition of $\chi$. 
Notice that $c_+(r)$ is not defined in the above formula.
It is determined by the fact that the total number of columns is $s$.

\subsection{Affine crystal structure}

In the last subsection we defined rigged configurations corresponding to $\pm$-diagrams. 
In Section~\ref{section.crystal background} we defined an involution $\mathfrak{S}$ on $\pm$-diagrams and saw
in Definition~\ref{definition.sigma} that it can be extended to the involution $\sigma$ on any element in $B^{r,s}$. 
In this vein, we make the following definition.

\begin{definition} \label{definition.sigma rc}
Let $(\nu,J)\in \RC(B^{r,s})$ with $B^{r,s}$ a KR crystal of type $D_n^{(1)}$ with $1\le r\le n-2$.
Choose a sequence $\mathbf{b}=(b_1,b_2,\ldots, b_k)$ with $b_i\in\{2,3,\ldots,n\}$ such that 
$e_{\mathbf{b}}(\nu,J) := e_{b_1} \cdots e_{b_k}(\nu,J)$ is $\{2,3,\ldots,n\}$-highest weight. Then define
\begin{equation}
	\sigmarc(\nu,J) := f_{\mathbf{b}'} \circ \gammarc \circ \; \mathfrak{S} \circ \gammarc^{-1} 
	\circ \;e_{\mathbf{b}}(\nu,J)\; ,
\end{equation}
where $\mathbf{b}'$ is the reverse of $\mathbf{b}$.
\end{definition}

Next we define $\sigmarc$ for $r=n-1,n$.

\begin{proposition}
The $\{2,\ldots,n\}$-highest weight elements of $\RC(B^{n-1,s})$ are given by 
\begin{equation} \label{J-ht for spin}
\begin{split}
	\nu&=((\hspace{2.5mm}j),(j),\ldots,(j),\emptyset)\\
	J&=((-j),(0),\ldots,(0),\emptyset)
\end{split}
\end{equation}
for $j=0,1,\ldots,s$, and those of $\RC(B^{n,s})$ by interchanging $(\nu^{(n-1)},J^{(n-1)})$ and
$(\nu^{(n)},J^{(n)})$ in each of the above elements. Here in the $j=0$ case $\nu$ and $J$ should be 
understood as a sequence of empty partitions.
\end{proposition}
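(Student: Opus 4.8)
The plan is to reduce the statement to the known structure of the classical crystal $B^{n-1,s}$ and then verify the displayed list directly on the rigged‑configuration side. By Theorem~\ref{theorem.crystal isomorphism} there is a weight‑preserving $D_n$‑crystal isomorphism $B^{n-1,s}\cong\RC(B^{n-1,s})$, so it suffices to identify the $\{2,\ldots,n\}$‑highest weight elements of both. Since $B^{n-1,s}\cong B(s\varpi_{n-1})$ as $D_n$‑crystals by \eqref{equation.KR decomposition spin}, these elements are in bijection with the irreducible components of the restriction of $B(s\varpi_{n-1})$ to the subalgebra on nodes $\{2,\ldots,n\}$, which is of type $D_{n-1}$. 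By the classical $D_n\downarrow D_{n-1}$ branching rule this restriction is multiplicity free with exactly $s+1$ components, and the corresponding $\{2,\ldots,n\}$‑highest weights, viewed as $D_n$‑weights, are $s\varpi_{n-1}-j(\alpha_1+\alpha_2+\cdots+\alpha_{n-1})$ for $j=0,1,\ldots,s$. Hence $\RC(B^{n-1,s})$ has exactly $s+1$ elements that are $\{2,\ldots,n\}$‑highest weight, and they carry these $s+1$ pairwise distinct weights. (A dimension count, e.g.\ via $\dim V(s\varpi_{n-1})$ and the half‑spin branching of the vector factor, confirms the multiplicities.)

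Next I would exhibit $s+1$ such elements explicitly, namely the pairs $(\nu_j,J_j)$ of \eqref{J-ht for spin} for $j=0,\ldots,s$. Using \eqref{equation.vacancy} with only $L_s^{(n-1)}=1$ nonzero and $m_j^{(a)}=1$ for $1\le a\le n-1$, one computes
\[
 p^{(1)}_i=-\min(i,j),\qquad p^{(a)}_i=0\ \ (2\le a\le n-1),\qquad p^{(n)}_i=\min(i,j),
\]
so every rigging of $(\nu_j,J_j)$ is $\le$ its vacancy number and $(\nu_j,J_j)$ is a valid rigged configuration; it is $\{2,\ldots,n\}$‑highest weight because for $a=2,\ldots,n$ no string of $(\nu_j,J_j)^{(a)}$ has a negative rigging, so $e_a$ is undefined; and its weight is $s\varpi_{n-1}-\sum_a|\nu_j^{(a)}|\alpha_a=s\varpi_{n-1}-j(\alpha_1+\cdots+\alpha_{n-1})$. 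It remains to see that $(\nu_j,J_j)$ actually lies in $\RC(B^{n-1,s})$: a direct check with Definition~\ref{definition.rc crystal} shows that each of $e_1,e_2,\ldots,e_{n-1}$ applied in turn removes a box from the length‑$j$ string in $\nu^{(1)},\nu^{(2)},\ldots,\nu^{(n-1)}$ respectively, shortening it and leaving rigging $0$ (rigging $-(j-1)$ in component $1$) while carrying a temporary negative rigging forward to the next component, so that $e_{n-1}e_{n-2}\cdots e_1(\nu_j,J_j)=(\nu_{j-1},J_{j-1})$; by induction $(\nu_j,J_j)=(f_1f_2\cdots f_{n-1})^{j}$ applied to the empty rigged configuration $(\nu_0,J_0)$, the unique element of $\RCb(B^{n-1,s})$.

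Comparing the two computations, the $s+1$ pairwise distinct $\{2,\ldots,n\}$‑highest weight elements $(\nu_0,J_0),\ldots,(\nu_s,J_s)$ must exhaust all $\{2,\ldots,n\}$‑highest weight elements of $\RC(B^{n-1,s})$, which proves the first assertion. For $\RC(B^{n,s})$ I would invoke the Dynkin diagram automorphism $\tau$ of $D_n$ interchanging nodes $n-1$ and $n$: it induces an isomorphism $B^{n-1,s}\cong B^{n,s}$ and acts on rigged configurations by interchanging the rigged partitions $(\nu^{(n-1)},J^{(n-1)})$ and $(\nu^{(n)},J^{(n)})$; since $\tau$ permutes the set $\{e_2,\ldots,e_n\}$ it preserves the $\{2,\ldots,n\}$‑highest weight property, so the $\{2,\ldots,n\}$‑highest weight elements of $\RC(B^{n,s})$ are exactly those of $\RC(B^{n-1,s})$ with the last two rigged partitions swapped. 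The main obstacle is the exhaustiveness — ruling out any further $\{2,\ldots,n\}$‑highest weight element — which the route above handles through the $D_n\downarrow D_{n-1}$ branching multiplicities; alternatively one could argue entirely within rigged configurations, showing that any nonempty $\{2,\ldots,n\}$‑highest weight $(\nu,J)\in\RC(B^{n-1,s})$ has $e_1(\nu,J)\ne 0$ and that $e_{n-1}\cdots e_1(\nu,J)$ is again $\{2,\ldots,n\}$‑highest weight of strictly smaller weight, descending to $(\nu_0,J_0)$ by induction.
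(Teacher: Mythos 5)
Your proposal is correct and follows essentially the same route as the paper: exhibit the $s+1$ listed rigged configurations, verify directly that they are valid and $\{2,\ldots,n\}$-highest weight, and conclude by matching the count of $s+1$ such elements (the paper simply cites \cite[Sections 3.2 and 6.2]{FOS:2009} for this count, where you rederive it from the $D_n\downarrow D_{n-1}$ branching of $B(s\varpi_{n-1})$). The extra detail you supply, including the vacancy-number computation and the reduction of $B^{n,s}$ to $B^{n-1,s}$ via the diagram automorphism, is consistent with what the paper leaves to the reader.
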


\begin{proof}
It is not hard to check that the listed rigged configurations are indeed in $\RC(B^{n-1,s})$ and are
$\{2,\ldots,n\}$-highest weight. By~\cite[Sections 3.2 and 6.2]{FOS:2009} there are precisely $s+1$ such
highest weight vectors, completing the proof.
\end{proof}

\begin{definition} \label{definition.sigma spin}
The involution $\sigmarc : \RC(B^{n-1,s})\leftrightarrow\RC(B^{n,s})$ is defined by requiring 
\begin{enumerate}
\item $\sigmarc$ commutes with $e_i,f_i$ for $i=2,\ldots,n$, and
\item $\sigmarc$ interchanges the $\{2,\ldots,n\}$-highest weight element~\eqref{J-ht for spin} with 
	the one with $j$ replaced with $s-j$ and with $(\nu^{(n-1)},J^{(n-1)})$ and $
	(\nu^{(n)},J^{(n)})$ switched.
\end{enumerate}
\end{definition}

Our main theorem is the following affine crystal isomorphism.

\begin{theorem}\label{theorem.main}
Let $B^{r,s}$ be a KR crystal of type $D_n^{(1)}$ with $1\le r\le n$ and $s\ge 1$.
Then there is an affine crystal isomorphism
\[
	\iota : B^{r,s} \cong \RC(B^{r,s})
\]
extending the classical crystal isomorphism $\iota_0$ of Theorem~\ref{theorem.crystal isomorphism}
using the affine crystal operators
\begin{equation*}
	f_0 = \sigmarc \circ f_1 \circ \sigmarc \quad \text{and} \quad e_0 = \sigmarc \circ e_1 \circ \sigmarc.
\end{equation*}
\end{theorem}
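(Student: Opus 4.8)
The plan is to recognize that the whole statement collapses to a single intertwining identity. By Theorem~\ref{theorem.crystal isomorphism} the map $\iota_0$ is already a $D_n$-crystal isomorphism, so it intertwines $e_a,f_a$ for every $a\in\J$, and in particular $e_1$ and $f_1$. Since on $B^{r,s}$ one has $f_0=\sigma\circ f_1\circ\sigma$ and $e_0=\sigma\circ e_1\circ\sigma$, while on $\RC(B^{r,s})$ one has $f_0=\sigmarc\circ f_1\circ\sigmarc$ and $e_0=\sigmarc\circ e_1\circ\sigmarc$, it suffices to prove the commutation
\[
	\iota_0\circ\sigma=\sigmarc\circ\iota_0 .
\]
Granting this, conjugating by $\iota_0$ gives $\iota_0\circ f_0\circ\iota_0^{-1}=(\iota_0\sigma\iota_0^{-1})(\iota_0 f_1\iota_0^{-1})(\iota_0\sigma\iota_0^{-1})=\sigmarc\circ f_1\circ\sigmarc=f_0$, and likewise $\iota_0\circ e_0\circ\iota_0^{-1}=e_0$; since $\iota_0$ is a bijection it is then an affine crystal isomorphism, and the affine crystal axioms on $\RC(B^{r,s})$ are inherited from $B^{r,s}$. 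Thus the theorem reduces to the displayed identity.

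For $1\le r\le n-2$ I would prove $\iota_0\circ\sigma=\sigmarc\circ\iota_0$ by unwinding Definitions~\ref{definition.sigma} and~\ref{definition.sigma rc} with the help of Proposition~\ref{proposition.rc pm}. Fix $t\in B^{r,s}$ and a word $\mathbf{b}=(b_1,\ldots,b_k)$ with $b_i\in\{2,\ldots,n\}$ such that $e_{\mathbf{b}}(t)$ is $\{2,\ldots,n\}$-highest weight. Because $\iota_0$ commutes with each $e_{b_i}$ and a classical crystal isomorphism sends $\{2,\ldots,n\}$-highest weight elements to $\{2,\ldots,n\}$-highest weight elements, the same word $\mathbf{b}$ is admissible for $\iota_0(t)$ and $e_{\mathbf{b}}(\iota_0(t))=\iota_0(e_{\mathbf{b}}(t))$. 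Then, using $\gammarc=\iota_0\circ\gamma$ from Proposition~\ref{proposition.rc pm} (so that $\gammarc^{-1}=\gamma^{-1}\circ\iota_0^{-1}$ on $\{2,\ldots,n\}$-highest weight rigged configurations) together with the fact that $\iota_0$ commutes with the $f_{b_i}$,
\begin{align*}
	\sigmarc(\iota_0(t)) &= f_{\mathbf{b}'}\,\gammarc\,\mathfrak{S}\,\gammarc^{-1}\,\iota_0(e_{\mathbf{b}}(t))\\
	&= f_{\mathbf{b}'}\,\iota_0\,\gamma\,\mathfrak{S}\,\gamma^{-1}(e_{\mathbf{b}}(t))\\
	&= \iota_0(f_{\mathbf{b}'}\,\gamma\,\mathfrak{S}\,\gamma^{-1}(e_{\mathbf{b}}(t)))
	= \iota_0(\sigma(t)).
\end{align*}
Since $\sigma$ is known to be well defined, that is, independent of the choice of $\mathbf{b}$, by~\cite{S:2008,FOS:2009}, this computation shows $\sigmarc(\iota_0(t))$ is likewise independent of $\mathbf{b}$; as $\iota_0$ is surjective, this simultaneously proves that Definition~\ref{definition.sigma rc} is sound and that $\sigmarc=\iota_0\circ\sigma\circ\iota_0^{-1}$, in particular an involution. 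I would also remark that no Kashiwara operator occurring in $f_{\mathbf{b}'}$ becomes undefined in the course of this computation, since each intermediate rigged configuration is the $\iota_0$-image of the corresponding element of $B^{r,s}$, where $\sigma$ is defined.

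For the spin cases $r=n-1,n$ the argument is analogous but shorter, because $B^{r,s}$ is classically irreducible. Here $\sigmarc$ is pinned down by Definition~\ref{definition.sigma spin}: it commutes with $e_i,f_i$ for $i=2,\ldots,n$ and acts on the $\{2,\ldots,n\}$-highest weight rigged configurations of~\eqref{J-ht for spin} in the prescribed way. I would verify that the corresponding $\sigma:B^{n-1,s}\leftrightarrow B^{n,s}$ arising in the construction of the KR crystal in~\cite{FOS:2009} satisfies the analogous characterization, and that under $\iota_0$, together with the classification of $\{2,\ldots,n\}$-highest weight elements recalled just before Definition~\ref{definition.sigma spin}, the two prescribed actions on highest weight vectors correspond; this again yields $\iota_0\circ\sigma=\sigmarc\circ\iota_0$, and the reduction of the first paragraph finishes the proof.

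The main obstacle is not in Theorem~\ref{theorem.main} itself: its real content is already isolated in Proposition~\ref{proposition.rc pm} (the compatibility of the linear map $\gammarc$ with $\iota_0\circ\gamma$), and once that is in hand the theorem is essentially formal. What still requires care is (i) checking that a single word $\mathbf{b}$ serves on both $B^{r,s}$ and $\RC(B^{r,s})$ and that no operator in $f_{\mathbf{b}'}$ falls off along the way, and (ii) the bookkeeping identifying the spin-case $\sigmarc$ with the affine structure of~\cite{FOS:2009}; of these, (ii) is the more delicate.
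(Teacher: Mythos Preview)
Your proposal is correct and follows essentially the same route as the paper: both reduce the theorem to the intertwining identity $\iota_0\circ\sigma=\sigmarc\circ\iota_0$, deduce this for $1\le r\le n-2$ from Proposition~\ref{proposition.rc pm} together with Definitions~\ref{definition.sigma} and~\ref{definition.sigma rc}, and handle the spin cases $r=n-1,n$ by matching Definition~\ref{definition.sigma spin} against the construction in~\cite{FOS:2009}. Your write-up is simply more explicit about the unwinding (choice of $\mathbf{b}$, well-definedness, no operator falling off) than the paper's terse two-sentence proof.
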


\begin{proof}
For $1\le r\le n-2$, the result follows from Theorem~\ref{theorem.crystal isomorphism} and the fact that by 
Proposition~\ref{proposition.rc pm} $\sigma$ on $B^{r,s}$ and $\sigmarc$ on $\RC(B^{r,s})$ intertwine under
the classical crystal isomorphism $\iota_0$ by Definitions~\ref{definition.sigma} and~\ref{definition.sigma rc}. 
For $r=n-1,n$ the result follows by comparing Definition~\ref{definition.sigma spin} and~\cite[Definition 6.3]{FOS:2009} 
and using~\cite[Theorem 6.4]{FOS:2009}.
\end{proof}

The affine crystal isomorphism between Kirillov--Reshetikhin crystals and rigged configurations is also well-behaved
with respect to the grading by coenergy and cocharge.

\begin{theorem}\label{theorem:cc=D_for_single_rectangle}
Let $b\in B^{r,s}$ with $1\le r\le n$ and $s\ge 1$. Then
\begin{equation}
	\Db(b) = \cc(\iota(b)).
\end{equation}
\end{theorem}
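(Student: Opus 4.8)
The plan is to reduce the statement to the known equality of coenergy and cocharge for $\{2,\ldots,n\}$-highest weight elements and then propagate it through the affine crystal. Since both $\Db$ and $\cc$ are constant on classical components (the coenergy by definition, and the cocharge because $\cc(\nu,J)$ depends only on the configuration $\nu$ and the total size of the riggings, both of which are preserved by $e_a,f_a$ for $a\in\J$ — this is part of the crystal structure in Definition~\ref{definition.rc crystal}), it suffices to prove the identity on a single representative of each affine crystal component. Because $B^{r,s}$ is known to be a connected affine crystal, it is in fact enough to verify $\Db(b)=\cc(\iota(b))$ for one element $b$, say the classically highest weight vector $u$ of weight $s\varpi_r$, provided we also check that both sides transform the same way under the affine operators $e_0,f_0$ that connect the classical components.

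First I would treat $1\le r\le n-2$. By Proposition~\ref{proposition.kleber} together with~\eqref{equation.KR decomposition}, the classical components of $B^{r,s}$ are indexed by partitions $\lambda$ obtained from $(s^r)$ by removing vertical dominoes, and for each such $\lambda$ the corresponding rigged configuration $(\nu,J)\in\RCb(B^{r,s},\lambda)$ has all riggings zero, so $\cc(\nu,J)=\cc(\nu)$. I would then compute $\cc(\nu)$ directly from~\eqref{equation.cocharge} using the explicit shapes $\nu^{(a)}=\overline\lambda^{[r-a]}$, $\overline\lambda$, $\overline\lambda'$ given in Proposition~\ref{proposition.kleber}, and show that it equals the number of vertical dominoes in $(s^r)\setminus\lambda$, which by~\cite{HKOTY:1999} (see also~\cite[Definition 5.4, Theorem 7.5]{ST:2011}) is exactly $\Db$ on the component $B(\lambda)$. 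This is a finite but genuinely computational step: one expands the quadratic form $\tfrac12\sum(\alpha_a|\alpha_b)\min(j,k)m_j^{(a)}m_k^{(b)}$ over the $D_n$ Cartan matrix and telescopes the contributions row by row. Since the classical isomorphism $\iota_0$ matches the component $B(\lambda)$ with $\RCb(B^{r,s},\lambda)$, this already gives $\Db(b)=\cc(\iota(b))$ for all $b$ once we know the two sides agree component-by-component; and since $\Db$ and $\cc$ are both classically constant, it remains only to check consistency under $e_0$ and $f_0$. For that I would use that $e_0,f_0$ shift $\Db$ by a fixed amount (a standard property of the coenergy on a single KR crystal, where $\Db$ only depends on the component and the affine moves interpolate between components) and that $\sigmarc$ — being built from $e_i,f_i$ ($i\ne0$), $\gammarc$, and $\mathfrak S$ — preserves $\cc$: the operators $e_i,f_i$ do by the above, and $\gammarc\circ\mathfrak S\circ\gammarc^{-1}$ must be checked to preserve $\cc$, which follows because $\mathfrak S$ only permutes the column-type counts $c_\bullet(h)$ of a $\pm$-diagram in a way that, under $\gammarc$, corresponds to rearranging parts of the $\nu^{(a)}$ and their (zero) riggings without changing $\cc(\nu)$. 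Equivalently, and perhaps more cleanly, one observes that $\sigma$ on $B^{r,s}$ preserves the coenergy $\Db$ (it is a component-level symmetry coming from the Dynkin involution, consistent with~\cite{FOS:2009}), so the desired intertwining $\Db\circ\sigma=\cc\circ\sigmarc\circ\iota_0$ on highest weight vectors forces $f_0=\sigmarc f_1\sigmarc$ to adjust $\cc$ exactly as $f_0$ adjusts $\Db$.

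For the spin cases $r=n-1,n$, the argument is shorter: by~\eqref{equation.KR decomposition spin} the crystal $B^{r,s}\cong B(s\varpi_r)$ is classically irreducible and by the Kleber algorithm $\RCb(B^{r,s})$ contains only the empty rigged configuration, so $\cc\equiv0$ on $\RCb$; correspondingly $\Db\equiv0$ on the single classical component (the number of vertical dominoes removed is zero). One then checks that the affine structures match via Definition~\ref{definition.sigma spin} and~\cite[Definitions 6.3, Theorem 6.4]{FOS:2009}, and that $\Db$ is again constant (here there is only one component), so the identity is immediate.

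The main obstacle I expect is the explicit verification in the non-spin case that the cocharge formula~\eqref{equation.cocharge} evaluated on the Kleber rigged configuration equals the domino count: the shapes $\overline\lambda^{[r-a]}$ truncate more rows as $a$ decreases, and the $\overline\lambda'$ at nodes $n-1,n$ keep only odd rows, so the telescoping is somewhat delicate and must be organized carefully — most likely by grouping the $D_n$ Cartan contributions into a chain part ($a=1,\ldots,n-2$) plus a fork ($a=n-2,n-1,n$), and then reindexing the double sum in terms of the rows of $\lambda$ rather than the rows of the $\nu^{(a)}$. A secondary, more conceptual point to pin down is the precise statement that $e_0,f_0$ change $\Db$ by the same constant that $e_0,f_0$ (defined via $\sigmarc$) change $\cc$; I would isolate this as a short lemma, proving it on one $0$-string in the affine crystal and invoking connectedness.
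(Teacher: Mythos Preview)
Your overall reduction to classical highest weight elements is correct and matches the paper, but you overcomplicate the argument in two places and make one small error.

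First, the error: your justification that $\cc$ is constant on classical components is wrong as stated. The configuration $\nu$ is \emph{not} preserved by $e_a,f_a$ --- see Definition~\ref{definition.rc crystal}, where $e_a$ removes a box from $\nu^{(a)}$. The constancy of cocharge under the classical operators is a genuine (short) result, proved in~\cite[Theorem~3.9]{S:2006}, and the paper simply cites it.

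Second, once you have checked $\Db=\cc\circ\iota$ on \emph{every} classical highest weight vector (one for each $\lambda$), you are finished: every element lies in some classical component and both sides are classically constant. The entire discussion of how $e_0,f_0$ shift $\Db$, whether $\sigmarc$ preserves $\cc$, and the ``short lemma'' on $0$-strings is unnecessary and should be dropped. You seem to oscillate between two strategies (check one element and propagate affinely, versus check all classical highest weights directly); only the second is needed, and it is what the paper does.

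Third, and most importantly, you miss the simplification that makes the highest-weight computation a one-liner. Instead of expanding the quadratic form~\eqref{equation.cocharge} over the $D_n$ Cartan matrix and telescoping, the paper rewrites the cocharge in terms of vacancy numbers,
\[
\cc(\nu)=\frac12\Bigl(\sum_{(a,i)}p^{(a)}_i m^{(a)}_i+\sum_{a,j,k}\min(j,k)\,L^{(a)}_j m^{(a)}_k\Bigr),
\]
and then observes that for the Kleber rigged configuration of Proposition~\ref{proposition.kleber} \emph{all vacancy numbers vanish}. The first sum is therefore zero; since $L^{(a)}_j=\chi(a=r)\chi(j=s)$ and every part of $\nu^{(r)}=\overline\lambda$ has length at most $s$, the second sum collapses to $\tfrac12|\nu^{(r)}|=\tfrac12|\overline\lambda|$, exactly the number of vertical dominoes in $(s^r)\setminus\lambda$. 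No delicate bookkeeping with $\overline\lambda^{[r-a]}$ and $\overline\lambda'$ is required. Your brute-force expansion would eventually succeed, but the vacancy-number identity together with the zero-vacancy observation is the intended shortcut, and it is what you should use.
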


\begin{proof}
By definition the coenergy is constant on classical components. By~\cite[Theorem 3.9]{S:2006} this is also true for cocharge.
Hence it suffices to prove the statement for highest weight elements. For this we first rewrite $\cc(\nu)$ 
in~\eqref{equation.cocharge} in terms of the vacancy numbers~\eqref{equation.vacancy}
\begin{equation} \label{equation.rewrite c}
	\cc(\nu)=\frac12\bigl(\sum_{(a,i) \in \HH} p^{(a)}_im^{(a)}_i+
	\sum_{a\in I_0,j,k\in\Z_{>0}}\min(j,k)L^{(a)}_jm^{(a)}_k\bigr).
\end{equation}
Note that $L^{(a)}_j=\chi(a=r)\chi(j=s)$ in our case.

For $r=n-1,n$ the statement holds since the only highest weight rigged configuration is the empty rigged configuration, which has
cocharge zero. Since there is only one classical component, the coenergy is also zero.

Now let $1\le r\le n-2$. 
Let $(\nu,J)$ be the rigged configuration corresponding to the highest weight $\lambda$ in Proposition~\ref{proposition.kleber}.
All riggings are zero, so that the contribution from the last term in~\eqref{equation.cocharge J} involving $J^{(a,i)}$ is zero.
Since all vacancy numbers are calculated to be zero by Proposition~\ref{proposition.kleber}, the contribution
from the first term in \eqref{equation.rewrite c} is zero. Hence we have
\[
	\cc(\nu,J)= \cc(\nu) = \frac12\sum_k\min(s,k)m^{(r)}_k=\frac{1}{2} |\nu^{(r)}| = \frac{1}{2} |\overline{\lambda}|
\]
again by Proposition~\ref{proposition.kleber}, which is equal to the number of vertical dominoes in 
$(s^r) \setminus \lambda$ and therefore agrees with the coenergy.
\end{proof}

\section{Combinatorial bijection} 
\label{section.cb}
In the previous sections we presented a bijection between a single Kirillov--Reshetikhin crystal
$B^{r,s}$ and the corresponding rigged configurations $\RC(B^{r,s})$, which representation theoretically
can be interpreted as an affine crystal isomorphism. In this section we give a combinatorial description
of this bijection. In fact, the definition of the combinatorial map can be given for arbitrary tensor products
not just a single KR crystal. It turns out that the description of the bijection involves a new kind of tableaux
of rectangular shape $(s^r)$ for the elements in $B^{r,s}$, which we call Kirillov--Reshetikhin tableaux,
instead of the usual Kashiwara--Nakashima tableaux
which in general are not rectangular. The procedure to go from Kashiwara--Nakashima tableaux to
the new rectangular tableaux is called the filling map and is the subject of Section~\ref{section.filling map}.
In Section~\ref{section.delta} we define the necessary combinatorial algorithms and conjecture that they
define a bijection $\Phi$ between a tensor product of crystals and rigged configurations. 
In Section~\ref{section.combinatorial bijection} it is proved that the combinatorial bijection $\Phi$
agrees with the crystal isomorphism $\iota$ on $\J$-highest weight elements for a single KR crystal.
We conclude in Section~\ref{section.conjectures} with conjectures and open questions.

\subsection{The filling map} \label{section.filling map}

In this section we define a filling map from KN tableaux of shape $\lambda$ (such that $B(\lambda)$ appears in $B^{r,s}$ 
of type $D_n^{(1)}$ with $1\le r\le n-2$ as a classical subcrystal) to tableaux of shape $(s^r)$ that appear in the 
combinatorially defined bijection between crystal elements and rigged configurations that will be described in 
Section~\ref{section.combinatorial bijection}.

Let the weight $\lambda$ be $k_r\varpi_r+k_{r-2}\varpi_{r-2}+\cdots$.
Let $k_c$ be the first odd integer (if it exists) in the sequence $k_{r-2},k_{r-4},\ldots$. If $k_c$ does not exist, set $c=-1$.
Then for the highest weight KN tableau $u_\lambda \in B(\lambda) \subset B^{r,s}$ we follow the procedure below, called
the \text{filling map}, to obtain a tableau $t$ of shape $(s^r)$.
The process proceeds by induction on the columns of $\lambda$ from left to right (according to $k_r,k_{r-2},\ldots$).
Recall that $u_\lambda$ is the tableau with 1s in row 1, 2s in row 2 etc..

\paragraph{\textbf{Step 0}}
The first $k_r$ columns of $t$ are the same as the first $k_r$ columns of $u_\lambda$ of height $r$, namely the columns
with entries $r  \cdots 2 1$.

\paragraph{\textbf{Step 1}}
For $k_h$ $(r>h\geq c)$, add the transpose of the following
rows to $t$ for $\lfloor k_h/2\rfloor$ times
$$\begin{array}{|c|c|c|c|c|c|c|c|}
\hline
1& 2& \cdots& h& \overline{r\rule{0pt}{6.1pt}}\rule{0pt}{10pt}
& \overline{r-1}& \cdots& \overline{h+1}\\
\hline
1& 2& \cdots& h& h+1& h+2& \cdots& r\\
\hline
\end{array}$$

\paragraph{\textbf{Step 2}}
For each column of $\lambda$ of height $h$ with $c>h$, add the transpose of the following row to $t$
$$\begin{array}{|c|c|c|c|c|c|c|c|c|c|c|c|}
\hline
1& \cdots& h-1& h& r-(x-h-2)&\cdots& r-1& r&
\overline{r\rule{0pt}{6.2pt}}\rule{0pt}{10pt}
& \cdots& \overline{x+1}& \overline{x\rule{0pt}{6.2pt}}\\
\hline
\end{array}\,.$$
Here $x$ is defined as follows.
As the initial condition, set $x=c+1$.
After putting the first column with this $x$,
we recursively redefine $x$ as follows.
Assume that the previous column was of height $h'$.
Then for the next filling, set $x=(h'+1)$-th letter of the
previous column (i.e., the top of the filled letters).

\paragraph{\textbf{Step 3}}
If $c>-1$, let $x$ be the final one obtained in Step 2.
Then the rightmost column is the transpose of
$$\begin{array}{|c|c|c|c|c|c|c|c|}
\hline
1& 2& \cdots& (r+x-1)/2& \overline{(r+x-1)/2}\rule{0pt}{11pt}&
\cdots& \overline{x+1}& \overline{x\rule{0pt}{6.5pt}}\\
\hline
\end{array}\,.$$

The final result $t$ is the filling of $u_\lambda$, denoted by $\fillmap(u_\lambda)$.

\begin{definition}\label{def:KRtableaux}
Let $b$ be the KN tableau representation of an element of $B(\lambda)$.
Let $u_\lambda=e_{a_k}\cdots e_{a_2}e_{a_1}(b)$ be the corresponding
$I_0$-highest weight vector (in particular $a_i \in I_0$ for $1\le i\le k$).
Then the {\bf Kirillov--Reshetikhin (KR) tableau} representation of $b$ is defined by
$\fillmap (b)=f_{a_1}f_{a_2}\cdots f_{a_k}\fillmap(u_\lambda)$.
Here the action of $e_i,f_i$ ($i\in I_0$) on KR tableaux is defined in a similar way to the action
on KN tableaux by reading the tableau columnwise and using the signature rule on the corresponding word.
\end{definition}

\begin{remark}
We can check that both $u_\lambda$ and $\fillmap (u_\lambda)$
are $I_0$-highest weight elements of weight $\lambda$.
Therefore the above $\fillmap (b)$ is always well-defined.
\end{remark}

\begin{example}
We list the filling map for the $I_0$-highest weight elements of weights
$(k_{10},k_8,k_6,k_4,k_2,k_0)=(2,\underline{3},0,0,3,1)$,
$(2,\underline{1},0,0,3,1)$, $(2,0,\underline{1},0,3,1)$
from left to right, respectively. The underlined letters correspond to $k_c$.
Here we set $r=12$ and $k_{12}=0$.
We color the cells $1,2,\ldots,i$ of type
$k_i$ corresponding to Step 1, 2, and 3
by pink, yellow and green, respectively.
\begin{center}
  \includegraphics[width=44mm]{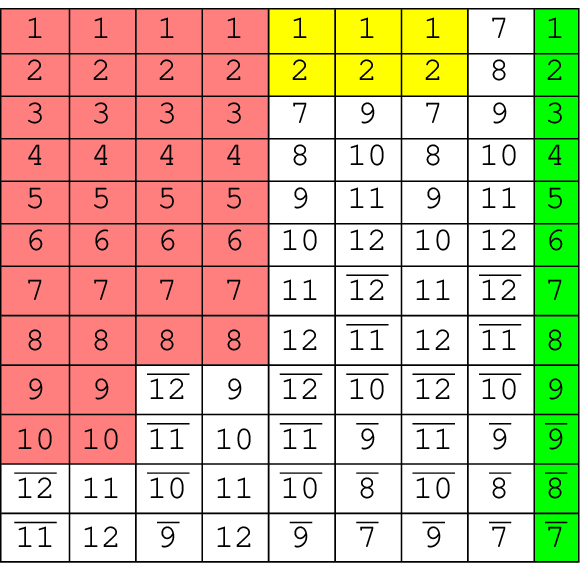}\hspace{2mm}
  \includegraphics[width=34mm]{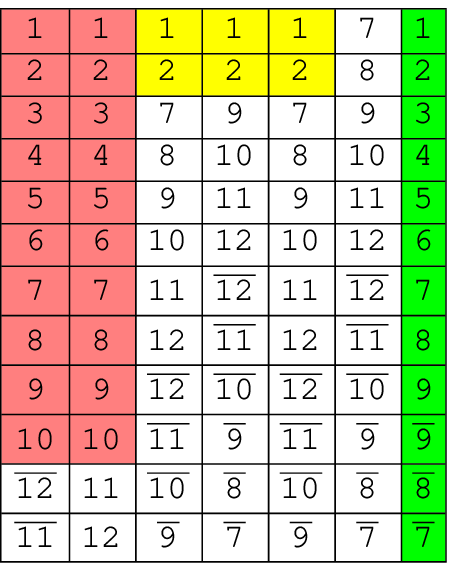}\hspace{2mm}
  \includegraphics[width=34mm]{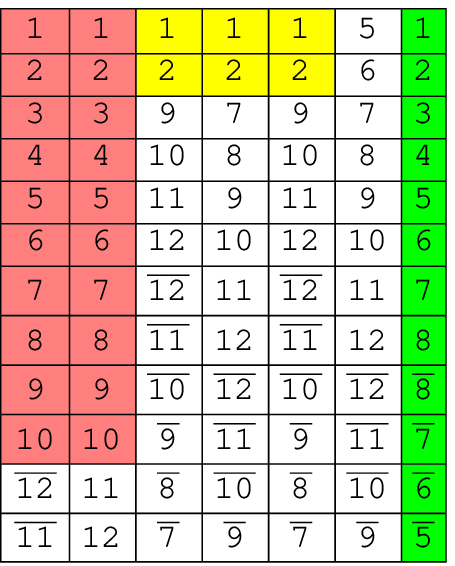}
\end{center}
\end{example}

\begin{remark} \label{remark.induction}
Note that if we start with a highest weight element $u_\lambda \in B(\lambda) \subset B^{r,s}$ for a KR crystal
of type $D_n^{(1)}$ with $1\le r\le n-2$, then $t'$ obtained from $t=\fillmap(u_\lambda)$ by removing the leftmost
column is in the image of the filling map for a different weight $\lambda'$, namely $t'=\fillmap(u_{\lambda'})$, where 
$u_{\lambda'}$ is the highest weight element in $B(\lambda')\subset B^{r,s-1}$. More precisely:
\begin{enumerate}
\item If $\lambda$ has a column of height $r$, then $\lambda'$ is obtained from $\lambda$ by removing a column of height $r$. 
\item If the two leftmost columns of $\lambda$ are of height $h$ and $h'$ with $r>h\ge h'$, then $\lambda'$ is
obtained from $\lambda$ by removing its leftmost column of height $h$ and replacing the next column of height $h'$ by 
a column of height $h'+(r-h)$.
\item If $\lambda$ has only one column, $\lambda'$ is the empty partition (or zero weight).
\end{enumerate}
Step 0 in the algorithm for the filling map corresponds to (i), Step 1 corresponds to a combination of (ii) with $h=h'$ followed by (i),
Step 2 to (ii), and Step 3 to (iii).
\end{remark}

\subsection{Operations on rigged configurations} \label{section.delta}
First we define the basic operation which we call $\delta$
$$\delta:(\nu,J)\longmapsto \{(\nu',J'),k\} \; ,$$
where $(\nu,J)$ and $(\nu',J')$ are rigged configurations
and $k\in\{1,2,\ldots,n,\bar{n},\ldots,\bar{2},\bar{1}\}$.
This map is a slight generalization of the type $D$ algorithm of~\cite{OSS:2003a} from single boxes to rectangles.
The $\delta$ operation constitutes an elementary step of our main map $\Phi$.

\begin{definition}
Suppose that $(\nu,J) \in \RC(B)$, where $B$ is a tensor product of KR crystals and the leftmost factor of $B$ is
$B^{a,l}$ where $1\leq a\leq n-2$. Then the map $\delta^{(a)}_l$
$$\delta^{(a)}_l:(\nu,J)\longmapsto \{(\nu',J'),k\}$$
is defined by the following procedure.
Set $\ell^{(a-1)}=l$.
\begin{enumerate}
\item[(1)]
For $a<i\leq n-2$, assume that $\ell^{(i-1)}$ is already determined.
Then we search for the shortest singular string in $(\nu,J)^{(i)}$
that is longer than or equal to $\ell^{(i-1)}$.
\begin{enumerate}
\item If there exists such a string, set $\ell^{(i)}$ to be
the length of the selected string and continue the process recursively.
If there is more than one such string, choose any of them.
\item If there is no such string, set $\ell^{(i)}=\infty$,
$k=i$ and stop.
\end{enumerate}
\item[(2)]
Suppose that $\ell^{(n-2)}<\infty$.
Then we search for the shortest singular string in $(\nu,J)^{(n-1)}$
(resp. $(\nu,J)^{(n)}$) that is longer than or equal to $\ell^{(n-2)}$
and define $\ell^{(n-1)}$ (resp. $\ell^{(n)}$) similarly.
\begin{enumerate}
\item If $\ell^{(n-1)}=\infty$ and $\ell^{(n)}=\infty$,
set $k=n-1$ and stop.
\item If $\ell^{(n-1)}<\infty$ and $\ell^{(n)}=\infty$,
set $k=n$ and stop.
\item If $\ell^{(n-1)}=\infty$ and $\ell^{(n)}<\infty$,
set $k=\bar{n}$ and stop.
\item If $\ell^{(n-1)}<\infty$ and $\ell^{(n)}<\infty$,
set $\bar{\ell}^{(n-1)}=\max(\ell^{(n-1)},\ell^{(n)})$ and continue.
\end{enumerate}
\item[(3)]
For $1\leq i\leq n-2$, assume that $\bar{\ell}^{(i+1)}$ is already defined.
Then we search for the shortest singular string in $(\nu,J)^{(i)}$
that is longer than or equal to $\bar{\ell}^{(i+1)}$ and has not yet been selected
as $\ell^{(i)}$.
Define $\bar{\ell}^{(i)}$ similarly.
If $\bar{\ell}^{(i)}=\infty$, set $k=\overline{i+1}$ and stop.
Otherwise continue.
If $\bar{\ell}^{(1)}<\infty$, set $k=\bar{1}$ and stop.
\item[(4)]
Once the process has stopped, remove the rightmost box of each selected row
specified by $\ell^{(i)}$ or $\bar{\ell}^{(i)}$.
The result gives the output $\nu'$.
\item[(5)]
Define the new riggings $J'$ as follows.
For the rows that are not selected by $\ell^{(i)}$ or $\bar{\ell}^{(i)}$,
take the corresponding riggings from $J$.
For the remaining parts, replace one $B^{a,l}$ in $B$ by
$B^{a-1,1}\otimes B^{a,l-1}$ (in anti-Kashiwara convention for tensor products). 
Denote the result by $B'$. Use $B'$ to compute all the vacancy numbers for $\nu'$.
Then the remaining riggings are defined so that all the corresponding
rows become singular with respect to the new vacancy number.
\end{enumerate}
\end{definition}

We remark that the resulting rigged configuration $(\nu',J')$
is associated with the tensor product $B'$.
For the sake of simplicity, we sometimes omit subscript or superscript of
$\delta^{(a)}_l$.
We write $\delta_2\delta_1(\nu,J)$ etc. for repeated applications of $\delta$
on the rigged configurations.

\begin{definition}
For a given rigged configuration $(\nu,J) \in \RC(B)$, where 
$B=B^{r_1,s_1}\otimes B^{r_2,s_2}\otimes \cdots\otimes B^{r_L,s_L}$ is a tensor product of KR crystals,
define the map $\Phi_B$ (sometimes also just denoted $\Phi$)
\begin{equation*}
\begin{split}
	\Phi_B \;:\;  \RC(B) &\longrightarrow B\\
	(\nu,J) &\longmapsto b
\end{split}
\end{equation*}
as follows.
Here $b$ is a filling of the rectangular shapes $(s_1^{r_1})$,
$(s_2^{r_2})$, $\ldots$, $(s_L^{r_L})$ (from left to right)
by the letters $k\in\{1,2,\ldots,n,\bar{n},\ldots,\bar{2},\bar{1}\}$.
\begin{enumerate}
\item[(1)] Suppose 
$\delta^{(1)}_1\cdots\delta^{(r_1-1)}_1\delta^{(r_1)}_{s_1}(\nu,J)=(\nu',J')$
yields the sequence of letters $k^{(r_1)},k^{(r_1-1)},\cdots,k^{(1)}$
($k^{(a)}$ corresponds to $\delta^{(a)}$).
Put the transpose of the row
$\begin{array}{|c|c|c|c|}
\hline
k^{(1)}& k^{(2)}& \cdots& k^{(r_1)}\rule{0pt}{12pt}\\
\hline
\end{array}$
as the leftmost column of the rectangle $(s_1^{r_1})$.
\item[(2)] Continue the previous step for
$\delta^{(1)}_1\cdots\delta^{(r_1-1)}_1\delta^{(r_1)}_{s_1-1}(\nu',J')=(\nu'',J'')$
and fill the second column for the rectangle $(s_1^{r_1})$ with the produced letters.
Repeat the process until all places of $(s_1^{r_1})$ are filled.
\item[(3)] Repeat the previous two steps for the remaining rectangles
$(s_2^{r_2})$, $(s_3^{r_3})$, $\ldots$, $(s_L^{r_L})$.
\end{enumerate}
\end{definition}

Now we propose the basic conjecture about the above map $\Phi$.

\begin{conjecture}\label{conj:main}
The map $\Phi$ gives a bijection between the set of rigged configurations
$\RC(B)$ and the tensor product of KR crystals $B$.
Here we identify the rectangular tableaux obtained by $\Phi$ as
the KR tableaux representation of the elements of crystals.
\end{conjecture}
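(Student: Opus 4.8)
The plan is to prove the conjecture by the same inductive machinery that established the type $A$ bijection in \cite{KSS:2002} and the type $D$ vector-representation case in \cite{OSS:2003a}, now carried out for the rectangular elementary operation $\delta^{(a)}_l$. Three facts have to be established: that $\delta^{(a)}_l$ is well defined (the algorithm terminates and $(\nu',J')\in\RC(B')$); that it admits an inverse which, from $(\nu',J')\in\RC(B')$ together with an admissible letter $k$, recovers $(\nu,J)$; and that a full column-sweep $\delta^{(1)}_1\cdots\delta^{(r-1)}_1\delta^{(r)}_{s}$ outputs letters $k^{(1)},\dots,k^{(r)}$ forming a legal column of a KR tableau, compatibly with the classical Kashiwara operators $e_i,f_i$ for $i\in I_0$. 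Granting these, $\Phi$ will be shown to be a bijection by a double induction, first on $s_1$ (stripping columns off the leftmost rectangle) and then on the number $L$ of tensor factors, with the inverse sweeps assembling the inverse of $\Phi$.

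For well-definedness, the key input is a local computation of how the vacancy numbers $p^{(b)}_i$ change when a box is removed from each selected string $\ell^{(i)}$ (and, after the turn at the fork $n-1,n$, from the barred strings $\bar{\ell}^{(i)}$) and $B^{a,l}$ is replaced by $B^{a-1,1}\otimes B^{a,l-1}$; one checks that the change is exactly such that the touched rows can be made singular and no rigging leaves its allowed range. Termination is immediate, since each step either strictly increases a length index or triggers a stopping rule. For the inverse, the output letter $k$ records where the forward algorithm stopped and hence which chain of strings was shortened; adding the boxes back and imposing singularity with respect to the restored vacancy numbers reconstructs $(\nu,J)$. The genuinely subtle point is the non-uniqueness in ``choose the shortest singular string'': I would show, by an exchange argument, that any two legitimate selections differ by transpositions of equal-length strings, so that $(\nu',J')$ and the produced letter are independent of the choices modulo the usual identification of riggings on equal parts.

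With $\delta$ under control, the induction runs as follows. The sweep $\delta^{(1)}_1\cdots\delta^{(r_1-1)}_1\delta^{(r_1)}_{s_1}$ replaces $B^{r_1,s_1}$ by $B^{r_1-1,1}\otimes B^{r_1,s_1-1}$, then $B^{r_1-1,1}$ by $B^{r_1-2,1}\otimes B^{r_1,s_1-1}$, and so on, so that afterwards the ambient product is $B^{r_1,s_1-1}\otimes B^{r_2,s_2}\otimes\cdots\otimes B^{r_L,s_L}$ and the $r_1$ produced letters have been entered as the leftmost column of $(s_1^{r_1})$. By the inductive hypothesis $\Phi$ on the smaller product is a bijection, and by invertibility of the sweep $\Phi_B$ is one too --- provided the leftmost-column letters always form an admissible column and the resulting filling lies in the image of the filling map. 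That image statement I would reduce to $I_0$-highest weight elements, where it is exactly the content of Theorem~\ref{theorem:single_highest_case} together with the column-removal description in Remark~\ref{remark.induction}, and then propagate to all elements using that $\Phi$ intertwines $e_i,f_i$ for $i\in I_0$; this intertwining simultaneously exhibits $\Phi$ as a classical crystal isomorphism, so the KR-tableau identification in the statement is meaningful. For a single factor $B^{r,s}$ one moreover obtains $\Phi_{B^{r,s}}=\fillmap\circ\,\iota^{-1}$ with $\iota$ the isomorphism of Theorem~\ref{theorem.main}, since both sides agree on highest weight vectors and commute with the classical operators.

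The hard part will be exactly this last compatibility: proving that $\delta$ commutes with $e_i,f_i$ for $i\in I_0$ and that every column-sweep outputs an admissible column (semistandard and satisfying the type $D$ column conditions). Already in the single-box case of \cite{OSS:2003a} this required a delicate analysis of the string selection around the fork nodes $n-1,n$, and the rectangular $\delta^{(a)}_l$ --- which removes an entire column in one stroke and must also be reconciled with the column- and box-splitting maps relating $B^{r,s}$ to $(B^{r,1})^{\otimes s}$ and $(B^{1,1})^{\otimes r}$ --- substantially multiplies the number of configurations to check. A possible shortcut would be to prove compatibility of $\Phi$ with these splitting embeddings and deduce the general case from the already-known vector-representation bijection, but this currently depends on unproven statements such as Conjecture~\ref{conjecture.R}, so the direct verification above appears to be the necessary route.
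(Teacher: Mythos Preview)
The statement you are attempting to prove is labeled a \emph{conjecture} in the paper, and the paper does not supply a proof of it. What the paper actually proves is only the very special case of $I_0$-highest weight elements in a \emph{single} KR crystal $B^{r,s}$ (Theorem~\ref{theorem:single_highest_case}): for such elements it checks directly, by tracing the $\delta$-algorithm through the explicit rigged configurations of Proposition~\ref{proposition.kleber}, that $\Phi=\fillmap\circ\iota^{-1}$. The general bijectivity for arbitrary tensor products is left open, and the paper explicitly records as a separate open problem (Conjecture~\ref{conjecture.broccoli}) the assertion that $\Phi$ commutes with $e_i,f_i$ for $i\ne0$.

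Your outline is a sensible strategy, and indeed mirrors the approach that worked in type $A$, but it is not a proof: the step you yourself flag as ``the hard part'' is precisely Conjecture~\ref{conjecture.broccoli}, and you do not prove it. Your argument needs this intertwining twice---once to propagate the image-of-filling-map statement from highest weight vectors to all elements, and once to guarantee that every column sweep produces an admissible KR-tableau column---so without it the induction does not close. Saying that the rectangular $\delta^{(a)}_l$ ``substantially multiplies the number of configurations to check'' and that ``the direct verification above appears to be the necessary route'' is an accurate assessment of the difficulty, but it is not a verification. Likewise, the well-definedness and invertibility of $\delta^{(a)}_l$ for general $B$ (not just a single column as in \cite{OSS:2003a}) are asserted rather than established; the vacancy-number bookkeeping when $B^{a,l}$ is replaced by $B^{a-1,1}\otimes B^{a,l-1}$ is more delicate than in the $l=1$ case and would itself require a careful case analysis that you have not carried out.

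In short: there is no proof in the paper to compare against, and your proposal, while a reasonable plan of attack, rests on exactly the unproven statements the paper isolates as conjectures.
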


In Theorem~\ref{theorem:single_highest_case} below we will prove this conjecture
for the highest weight elements of $B^{r,s}$.
We stress that there is no a priori reason that the combinatorially defined bijection $\Phi$
admits crystal structure that appeared in the definition of the KR tableaux.

\begin{example}
Let us consider the following rigged configuration of type
$B^{2,2}\otimes B^{3,1}\otimes B^{2,1}\otimes
B^{1,3}\otimes B^{1,1}\otimes B^{1,1}\otimes B^{1,1}$
(in anti-Kashiwara convention for the tensor products) of type $D^{(1)}_5$.
\begin{center}
\unitlength 10pt
\begin{picture}(35,4.5)
\put(0.1,0.1){0}
\put(0.1,1.1){0}
\put(0.1,2.1){0}
\put(0.1,3.1){1}
\put(1,0){$\Yboxdim10pt\yng(4,1,1,1)$}
\put(2.3,0.1){0}
\put(2.3,1.1){0}
\put(2.3,2.1){0}
\put(5.3,3.1){1}
\put(7.1,0.1){2}
\put(7.1,1.1){0}
\put(7.1,2.1){0}
\put(7.1,3.1){0}
\put(9.1,2.15){$\times$}
\put(8.0,0){$\Yboxdim10pt\yng(5,2,2,1)$}
\put(9.3,0.1){2}
\put(10.1,1.1){$-2$}
\put(10.3,2.1){0}
\put(13.1,3.1){$-2$}
\put(15.1,0.1){1}
\put(15.1,1.1){1}
\put(15.1,2.1){1}
\put(15.1,3.1){0}
\put(18.05,2.15){$\times$}
\put(20.0,3.1){$\times$}
\put(16.0,0){$\Yboxdim10pt\yng(5,3,1,1)$}
\put(17.3,0.1){1}
\put(17.3,1.1){1}
\put(19.2,2.1){1}
\put(21.1,3.1){0}
\put(23.1,2.1){0}
\put(22.4,3.1){$-1$}
\put(27,3.15){$\times$}
\put(24,2){$\Yboxdim10pt\yng(4,1)$}
\put(25.3,2.1){0}
\put(28.1,3.1){$-1$}
\put(30.1,2.1){0}
\put(30.1,3.1){0}
\put(33.05,3.15){$\times$}
\put(31,2){$\Yboxdim10pt\yng(3,1)$}
\put(32.3,2.1){0}
\put(34.3,3.1){0}
\end{picture}
\end{center}
We consider $B^{2,2}$ first.
Then we remove the boxes indicated by ``$\times$" in the above diagram
($\ell^{(2)}=2$, $\ell^{(3)}=3$, $\ell^{(4)}=4$, $\ell^{(5)}=3$,
$\bar{\ell}^{(4)}=4$, $\bar{\ell}^{(3)}=5$)
and obtain the letter $\bar{3}$ as the output.
For the next step we change $B^{2,2}$ into $B^{1,1}\otimes B^{2,1}$
and compute the vacancy number again.
We can continue this process as follows:
\begin{center}
\unitlength 10pt
\begin{picture}(35,4.5)
\put(0.1,0.1){1}
\put(0.1,1.1){1}
\put(0.1,2.1){1}
\put(0.1,3.1){1}
\put(4.05,3.1){$\times$}
\put(1,0){$\Yboxdim10pt\yng(4,1,1,1)$}
\put(2.3,0.1){0}
\put(2.3,1.1){0}
\put(2.3,2.1){0}
\put(5.3,3.1){1}
\put(7.1,0.1){2}
\put(7.1,1.1){2}
\put(7.1,2.1){1}
\put(6.4,3.1){$-1$}
\put(8.0,0){$\Yboxdim10pt\yng(5,2,1,1)$}
\put(9.3,0.1){2}
\put(9.3,1.1){2}
\put(10.1,2.1){$-2$}
\put(13.1,3.1){$-2$}
\put(15.1,0.1){1}
\put(15.1,1.1){1}
\put(15.1,2.1){1}
\put(15.1,3.1){0}
\put(16.0,0){$\Yboxdim10pt\yng(4,2,1,1)$}
\put(17.3,0.1){1}
\put(17.3,1.1){1}
\put(18.2,2.1){1}
\put(20.1,3.1){0}
\put(23.1,2.1){0}
\put(22.4,3.1){$-1$}
\put(24,2){$\Yboxdim10pt\yng(3,1)$}
\put(25.3,2.1){0}
\put(27.1,3.1){$-1$}
\put(30.1,2.1){0}
\put(30.1,3.1){0}
\put(31,2){$\Yboxdim10pt\yng(2,1)$}
\put(32.3,2.1){0}
\put(33.3,3.1){0}
\end{picture}
\end{center}
\begin{center}
\unitlength 10pt
\begin{picture}(35,4.5)
\put(0.1,0.1){0}
\put(0.1,1.1){0}
\put(0.1,2.1){0}
\put(0.1,3.1){1}
\put(1.1,0.2){$\times$}
\put(1,0){$\Yboxdim10pt\yng(3,1,1,1)$}
\put(2.3,0.1){0}
\put(2.3,1.1){0}
\put(2.3,2.1){0}
\put(4.3,3.1){1}
\put(7.1,0.1){2}
\put(7.1,1.1){2}
\put(7.1,2.1){1}
\put(6.3,3.1){$-2$}
\put(8.1,0.2){$\times$}
\put(8.1,1.2){$\times$}
\put(8.0,0){$\Yboxdim10pt\yng(5,2,1,1)$}
\put(9.3,0.1){2}
\put(9.3,1.1){2}
\put(10.1,2.1){$-2$}
\put(13.1,3.1){$-2$}
\put(15.1,0.1){1}
\put(15.1,1.1){1}
\put(15.1,2.1){1}
\put(15.1,3.1){0}
\put(16.1,0.2){$\times$}
\put(16.1,1.2){$\times$}
\put(16.0,0){$\Yboxdim10pt\yng(4,2,1,1)$}
\put(17.3,0.1){1}
\put(17.3,1.1){1}
\put(18.1,2.1){1}
\put(20.1,3.1){0}
\put(23.1,2.1){0}
\put(22.4,3.1){$-1$}
\put(24.1,2.2){$\times$}
\put(24,2){$\Yboxdim10pt\yng(3,1)$}
\put(25.3,2.1){0}
\put(27.1,3.1){$-1$}
\put(30.1,2.1){0}
\put(30.1,3.1){0}
\put(31.1,2.2){$\times$}
\put(31,2){$\Yboxdim10pt\yng(2,1)$}
\put(32.3,2.1){0}
\put(33.3,3.1){0}
\end{picture}
\end{center}
\begin{center}
\unitlength 10pt
\begin{picture}(35,3.5)
\put(0.1,0.1){1}
\put(0.1,1.1){1}
\put(0.1,2.1){2}
\put(1,0){$\Yboxdim10pt\yng(3,1,1)$}
\put(2.3,0.1){0}
\put(2.3,1.1){0}
\put(4.3,2.1){1}
\put(7.1,1.1){1}
\put(6.3,2.1){$-2$}
\put(8.0,1){$\Yboxdim10pt\yng(5,2)$}
\put(10.1,1.1){$-2$}
\put(13.1,2.1){$-2$}
\put(15.1,1.1){1}
\put(15.1,2.1){0}
\put(16.0,1){$\Yboxdim10pt\yng(4,2)$}
\put(18.1,1.1){1}
\put(20.1,2.1){0}
\put(22.4,2.1){$-1$}
\put(24,2){$\Yboxdim10pt\yng(3)$}
\put(27.1,2.1){$-1$}
\put(30.1,2.1){0}
\put(31,2){$\Yboxdim10pt\yng(2)$}
\put(33.3,2.1){0}
\end{picture}
\end{center}
\begin{center}
\unitlength 10pt
\begin{picture}(35,3.5)
\put(0.1,0.1){0}
\put(0.1,1.1){0}
\put(0.1,2.1){1}
\put(1,0){$\Yboxdim10pt\yng(3,1,1)$}
\put(2.3,0.1){0}
\put(2.3,1.1){0}
\put(4.3,2.1){1}
\put(7.1,1.1){1}
\put(6.3,2.1){$-2$}
\put(8.0,1){$\Yboxdim10pt\yng(5,2)$}
\put(10.1,1.1){$-2$}
\put(13.1,2.1){$-2$}
\put(15.1,1.1){1}
\put(15.1,2.1){0}
\put(16.0,1){$\Yboxdim10pt\yng(4,2)$}
\put(18.1,1.1){1}
\put(20.1,2.1){0}
\put(22.4,2.1){$-1$}
\put(24,2){$\Yboxdim10pt\yng(3)$}
\put(27.1,2.1){$-1$}
\put(30.1,2.1){0}
\put(31,2){$\Yboxdim10pt\yng(2)$}
\put(33.3,2.1){0}
\end{picture}
\end{center}
The outputs are 2, $\bar{1}$ and 1 respectively, which give the tableau
$\Yvcentermath1\young(21,\mthree\mone)\,$.
For the reader's convenience, we remark that in the second rigged configuration, we have
$\ell^{(2)}=\ell^{(3)}=\ell^{(4)}=\ell^{(5)}=\bar{\ell}^{(4)}=\bar{\ell}^{(3)}
=\bar{\ell}^{(2)}=\bar{\ell}^{(1)}=1$ and obtain the letter $\bar{1}$ as the output.
If we continue the process entirely we obtain the following
tensor product
$$b=\Yvcentermath1\young(21,\mthree\mone)\otimes
\young(1,3,\mtwo)\otimes
\young(2,4)\otimes
\young(15\mthree)\otimes
\young(3)\otimes
\young(1)\otimes\young(3)\,.$$
Let us convert the result into the KN tableaux representation.
Take $\Yvcentermath1\young(21,\mthree\mone)\,$ as an example.
Then we have
$e_2e_3e_5e_4e_3e_1\Yvcentermath1\young(21,\mthree\mone)=\young(11,2\mone)
\xrightarrow{\fillmap^{-1}}\young(1,2)$ and since
$f_1f_3f_4f_5f_3f_2\left(\Yvcentermath1\young(1,2)\right)
=\Yvcentermath1\young(2,\mthree)$ we get the
identification $\fillmap\left(\Yvcentermath1\young(2,\mthree)\right)=
\Yvcentermath1\young(21,\mthree\mone)$ in $B^{2,2}$.
In particular, we have checked that the resulting tableau is indeed a KR tableau.
To summarize, in the KN tableaux representation, the result looks as follows
$$\fillmap^{-1}(b)=
\Yvcentermath1\young(2,\mthree)\otimes
\young(1,3,\mtwo)\otimes
\young(2,4)\otimes
\young(15\mthree)\otimes
\young(3)\otimes
\young(1)\otimes\young(3)\,.$$
\end{example}

\subsection{The combinatorial bijection} \label{section.combinatorial bijection}

In this section we show that the combinatorially defined map $\Phi$ between $\RC(B^{r,s})$
and $B^{r,s}$ agrees with our previous affine crystal isomorphism $\iota$ from Theorem~\ref{theorem.main}
on highest weight elements.
Thus we prove Conjecture~\ref{conj:main} for the highest weight elements of $B^{r,s}$.

\begin{theorem}\label{theorem:single_highest_case}
Let $B^{r,s}$ be a KR crystal of type $D_n^{(1)}$ and $1\le r\le n-2$, $s \ge 1$. We have
\[
	\Phi = \fillmap \circ \iota^{-1}
\]
on highest weight elements.
\end{theorem}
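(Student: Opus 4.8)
The plan is to reduce the identity to an explicit comparison of two combinatorial recursions and then run an induction on $s$. By Theorem~\ref{theorem.crystal isomorphism} and Proposition~\ref{proposition.kleber}, the $\J$-highest weight elements of $\RC(B^{r,s})$ are precisely the Kleber rigged configurations $(\nu_\lambda,J_\lambda)$ indexed by the partitions $\lambda$ obtained from $(s^r)$ by removing vertical dominoes, and $\iota^{-1}(\nu_\lambda,J_\lambda)=u_\lambda$, the highest weight KN tableau of shape $\lambda$; moreover $\fillmap(u_\lambda)$ is computed explicitly by Steps 0--3 of Section~\ref{section.filling map}. Hence the theorem is equivalent to the purely combinatorial assertion $\Phi(\nu_\lambda,J_\lambda)=\fillmap(u_\lambda)$ for every such $\lambda$, and I would prove this by induction on $s$ (when $s=1$ the partition $\lambda$ is a single column and the reduced shape $\lambda'$ below is empty, so that case is just the one-column instance of the inductive step with the convention that $\fillmap$ of the empty weight is the empty tableau).

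For the inductive step I would analyze the first ``big step'' of $\Phi$, that is, the rigged configuration $(\nu',J'):=\delta^{(1)}_1\cdots\delta^{(r-1)}_1\delta^{(r)}_s(\nu_\lambda,J_\lambda)$ together with the column of letters $k^{(1)},\dots,k^{(r)}$ that it outputs. The key simplification is that in $(\nu_\lambda,J_\lambda)$ all riggings equal $0$ and, by the uniqueness statement of Proposition~\ref{proposition.kleber}, all vacancy numbers on occupied rows equal $0$; hence every string is singular and each call of $\delta$ simply selects, at every level, the shortest string of length at least the current bound. Using the explicit shapes $\nu_\lambda^{(a)}=\overline{\lambda}^{[r-a]}$ for $1\le a<r$, $\nu_\lambda^{(a)}=\overline{\lambda}$ for $r\le a\le n-2$, and $\nu_\lambda^{(n-1)}=\nu_\lambda^{(n)}=\overline{\lambda}'$, I would trace the descending phase (the $\ell^{(i)}$), the branching at nodes $n-1$ and $n$, and the ascending phase (the $\bar\ell^{(i)}$) of each $\delta^{(a)}$, reading off which rows lose a box. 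Then I would check two things: (a) that after re-singularizing the modified rows with respect to the multiplicity array of $B^{r,s-1}$ the configuration $(\nu',J')$ again has all vacancy numbers zero, so it equals the Kleber rigged configuration $(\nu_{\lambda'},J_{\lambda'})\in\RCb(B^{r,s-1})$ with $\lambda'$ the partition attached to $\lambda$ in Remark~\ref{remark.induction}(i)--(iii); and (b) that the column $k^{(1)}\cdots k^{(r)}$ coincides with the leftmost column of $\fillmap(u_\lambda)$, i.e.\ with the column produced by Step 0, 1, 2 or 3 according to the shape of $\lambda$ near its left edge. In the Step 2 case this verification should simultaneously pin down the recursively defined parameter $x$ and the barred letters $\overline r,\dots,\overline x$ as the lengths of the rows selected in the ascending phase.

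Granting (a) and (b), the induction closes immediately: by the inductive hypothesis $\Phi(\nu_{\lambda'},J_{\lambda'})=\fillmap(u_{\lambda'})$, which by Remark~\ref{remark.induction} is obtained from $\fillmap(u_\lambda)$ by deleting its leftmost column; and by construction $\Phi(\nu_\lambda,J_\lambda)$ is the tableau whose leftmost column is $k^{(1)}\cdots k^{(r)}$ and whose remaining columns form $\Phi(\nu_{\lambda'},J_{\lambda'})$, so $\Phi(\nu_\lambda,J_\lambda)=\fillmap(u_\lambda)$.

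The hard part is (a) together with (b), namely the explicit run of the $\delta$ algorithm on $(\nu_\lambda,J_\lambda)$. One must follow, through all $n$ rigged partitions, exactly how the truncations $\overline{\lambda}^{[b]}$ and $\overline{\lambda}'$ control the stopping indices in both the $\ell$- and $\bar\ell$-phases; check that the (ever-present) choices among equal-length singular strings do not affect the outcome, so that $\Phi(\nu_\lambda,J_\lambda)$ is well defined here; treat the behaviour near nodes $n-1,n$ and the degenerate configurations (for instance $x=1$, the two leftmost columns of $\lambda$ being of equal height, which is the situation of Step 1, or $\lambda$ having fewer than $s$ columns); and carry out the vacancy-number bookkeeping identifying $(\nu',J')$ with $(\nu_{\lambda'},J_{\lambda'})$. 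Once this case analysis is in place, matching it term by term against Steps 0--3 of the filling map is routine.
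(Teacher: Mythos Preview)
Your plan coincides with the paper's proof: both argue by induction on column removal, checking that one pass of $\delta^{(1)}_1\cdots\delta^{(r)}_s$ on the Kleber rigged configuration $(\nu_\lambda,J_\lambda)$ outputs the leftmost column of $\fillmap(u_\lambda)$ and leaves the Kleber configuration for the reduced weight $\lambda'$ of Remark~\ref{remark.induction}. The paper organizes the case split as Steps~0--3 of the filling map (in Step~1 it removes two columns at once rather than one), supplies the explicit selected-string-length formulas~\eqref{equation.ell} and~\eqref{equation.ell2} that you defer to ``the hard part'', and disposes of the terminal single-column case (your $s=1$ base, its Step~3) by citing~\cite{S:2005}. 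One caution on your sketch: the assertion that ``every string is singular and each call of $\delta$ simply selects the shortest string of length at least the current bound'' is true only for the initial $(\nu_\lambda,J_\lambda)$; after the first $\delta$ the short strings in $(\delta^{i-1}(\nu,J))^{(r-i+1)}$ acquire vacancy number~$1$ and become nonsingular, and it is exactly this that forces the selected lengths up to $s-1$ or $s$ in the paper's trace.
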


\begin{proof}
Let $(\nu,J)$ be a highest weight element of weight $\lambda$ as in Proposition~\ref{proposition.kleber}.
Since all highest weights appear with multiplicity one in $B^{r,s}$ we immediately know the corresponding
KN tableau $b = \iota^{-1}(\nu,J)$. Let $t = \fillmap(b)$.
We will prove the claim by induction in four steps, corresponding to the steps in the definition of the filling map
of Section~\ref{section.filling map}.
Let $h$ be the largest integer satisfying $k_h>0$ and
$h'$ be the second largest integer satisfying $k_{h'}>0$ (if it exists).
\begin{enumerate}
\item[\textbf{Step 0.}] Suppose $h=r$.
We show that the removal of one column on $(\nu,J)$ via the combinatorial algorithm
for $\Phi$ corresponds to the removal of the leftmost column of $t$ (which is $r \cdots 21$).
\item[\textbf{Step 1.}] Suppose $c\le h<r$ and $k_h\geq 2$.
We show that the removal of two columns on $(\nu,J)$ via the combinatorial
algorithm for $\Phi$ corresponds to the removal of the two leftmost columns of $t$.
\item[\textbf{Step 2.}] Suppose $h=c$ and $k_c=1$ and $k_{h'}>0$. We show that the removal of one column on $(\nu,J)$ 
by the combinatorial algorithm defining $\Phi$ corresponds to the removal of the leftmost column of $t$.
\item[\textbf{Step 3.}] Suppose $k_c =1$ and all other $k_i=0$. Then the combinatorial algorithm for $\Phi$ 
for a single column agrees with the filling map.
\end{enumerate}

To prove Step 0, recall that the algorithm for the first application of $\delta$ in the definition of $\Phi$ for $B^{r,s}$
demands that the singular strings are of length at least $s$. However by Proposition~\ref{proposition.kleber}, the partitions
$\nu^{(a)}$ in $(\nu,J)$ have no parts of length $s$.
In the next step, we use $B^{r-1,1} \otimes B^{r,s-1}$ (in anti-Kashiwara convention for the tensor products)
for the computations of the vacancy numbers.
Then the vacancy numbers for $(\delta (\nu,J))^{(r-1)}$ become all one, so that
there is no singular string and thus we cannot select a row.
The same situation also holds for all $\delta^i$ $(1\leq i\leq r)$.
Hence after $r$ applications of $\delta$ (which corresponds to the removal of one column)
the corresponding letters are $r \cdots 21$ which are precisely the first column of $t$.

For Step 1, we claim that the lengths of the selected strings by the first $1\le i\le r-h$ applications of $\delta$ are given as follows,
where $h$ is the height of the first column of $\lambda$ (or equivalently $b$):
\begin{equation} \label{equation.ell}
\begin{split}
\ell^{(a)} &= \begin{cases}
	s-1 & \text{for $a=r-i+1,\ldots,r-1$,}\\
	s    & \text{for $a=r,\ldots,n,\overline{n-2},\ldots,\overline{h+i}$,}
	\end{cases}
	\qquad \quad \text{for $i$ odd,}\\
\ell^{(a)} &= \begin{cases}
	s-1 & \text{for $a=r-i+1,\ldots,n,\overline{n-2},\ldots,\overline{r}$,}\\
	s    & \text{for $a=\overline{r-1},\ldots,\overline{h+i}$,}
	\end{cases}
	\quad \text{for $i$ even,}
\end{split}
\end{equation}
with all other $\ell^{(a)}=\infty$. This follows from the fact that for the first application of $\delta$ the selected strings need to
be of length at least $s$ and they exist by Proposition~\ref{proposition.kleber} and are singular. For $1< i\le r$, the strings of 
length strictly less than $s-1$ in $(\delta^{i-1}(\nu,J))^{(r-i+1)}$ are nonsingular (in fact the vacancy number is one and the riggings 
are zero), but there is a singular string of length $s-1$. Hence $\ell^{(r-i+1)}=s-1$ in all cases.
There exist singular strings of length $s-1$ of rigging and vacancy number 0 for all $(\delta^{i-1}(\nu,J))^{(a)}$
for $r-i<a\le r-1$ and they are chosen. For $i$ even there exist singular strings of length $s-1$ in $(\delta^{i-1}(\nu,J))^{(a)}$
for $a=r,\ldots,n, \overline{n-2},\ldots,\overline{r}$. However, for $i$ odd there exist no such parts in 
$(\delta^{i-1}(\nu,J))^{(r)}$, so that the singular strings of length $s$ are chosen. Altogether this proves~\eqref{equation.ell}.

It is not hard to check using~\eqref{equation.ell} that the partitions in $\delta^{r-h}(\nu,J)$ are of the form of 
Proposition~\ref{proposition.kleber} for weight given by $\lambda$ with the first two columns of height $h$ removed from $\lambda$.
Also, the letters produced by the first $r-h$ applications of $\delta$ are $\overline{r-h}, \overline{r-h+1}, \ldots, \overline{r}$.
There are no singular strings in $(\delta^{r-h}(\nu,J))^{(h)}$ so that the remaining $h$ applications of $\delta$
do not change $\delta^{r-h}(\nu,J)$ and the produced letters are $h, h-1, \ldots, 1$. Comparing with the filling map
of Section~\ref{section.filling map} this indeed produces the first column of $t$.

To remove the next column, note that the selected singular strings need to be of length at least $s-1$ in the $r$-th rigged 
partition. However, since the shape is now $\lambda$ with the first two columns removed, there are no such singular
string. Hence the next $r$ applications of $\delta$ do not change the rigged configuration $\delta^r(\nu,J)$ and
the produced letters are $r,r-1,\ldots,1$ which corresponds to the second column of $t$ from the left.

In Step 2, let $h$ and $h'$ denote the height of the two leftmost columns of $\lambda$, respectively.
For Step 2 we have $h>h'$ since the first column is of height $h=c$ and since $k_c=1$, the next column of height
$h'$ must be strictly smaller.
For the first $i$ applications of $\delta$ for $1\leq i\leq r-h$, by similar arguments as in Step 1 the same singular strings 
are chosen as in Equation~\eqref{equation.ell}. This produces the letters $\overline{r-h}, \overline{r-h+1}, \ldots,
\overline{r}$.

We claim that the length of the chosen strings by $\delta^i$ for $r-h<i\leq r-h'$ is given by
\begin{equation} \label{equation.ell2}
	\ell^{(a)} = s-1 \qquad \text{for $a=r-i+1,r-i+2,\ldots,2r-h-i$}
\end{equation}
and all other $\ell^{(a)}=\infty$.
To prove~\eqref{equation.ell2}, note that the partitions in $\delta^{r-h}(\nu,J)$
are obtained from $\nu^{(a)}$ by removing the $s$-th column (if it exists) and changing the $(s-1)$-st column
as follows:
\begin{itemize}
\item For $a=n-1,n$, the height of the $(s-1)$-st column in $\nu^{(a)}$ becomes $(h-h')/2$.
\item For $h\leq a\leq n-2$, the height of the $(s-1)$-st column in $\nu^{(a)}$ becomes $h-h'$. 
\item For $h'<a<h$, the height of the $(s-1)$-st column in $\nu^{(a)}$ becomes $a-h'$.
\item For $1\le a\le h'$, there is no $(s-1)$-st column.
\end{itemize}
The riggings are all 0.
The vacancy numbers are $p^{(r)}_{s-1}=1$,
$p^{(h)}_l=1$ where $l\leq s-2$ and 0 otherwise.
The next step $i=r-h+1$ we begin to choose rows of $(\delta^{r-h}(\nu,J))^{(h)}$.
The only singular strings in $(\delta^{r-h}(\nu,J))^{(h)}$ are of length $s-1$,
which implies $\ell^{(a)}=s-1$ for $h\leq a\leq r-1$.
Since there are no singular strings in $(\delta^{r-h}(\nu,J))^{(r)}$ which are longer than or equal to $s-1$, we 
have $\ell^{(r)}=\infty$. This agrees with~\eqref{equation.ell2} for $i=r-h+1$.
Therefore after finishing the step $i=r-h+1$,
the strings of length $s-1$ in $(\delta^{r-h+1}(\nu,J))^{(r-1)}$ and
the strings of length strictly less than $s-1$ in $(\delta^{r-h+1}(\nu,J))^{(h-1)}$ become non-singular.
Thus in the next step $i=r-h+2$, we have to choose a singular string of length
$s-1$ in $(\delta^{r-h+1}(\nu,J))^{(h-1)}$ and stop at $(\delta^{r-h+1}(\nu,J))^{(r-2)}$.
We can continue this process inductively until the $i=(r-h')$-th step.
The letters produced in steps $r-h<i\le r-h'$ are $r,r-1,\ldots,h'+(r-h)+1$.

After the application of $\delta^{r-h'}$, there are no singular strings in $(\delta^{r-h'}(\nu,J))^{(h')}$.
Thus there are no more strings removed by $\delta^i$ for $r-h'<i\le r$ and the produced letters are $h',h'-1,\ldots,1$.
Comparing with Step 2 of the algorithm in Section~\ref{section.filling map} the produced letters precisely form the leftmost column
of $t$, which is hence removed. Note also that the resulting rigged configuration $\delta^r(\nu,J)$ is of the form
of Proposition~\ref{proposition.kleber} for a new weight $\lambda'$ as given in Remark~\ref{remark.induction}.

Step 3 follows from~\cite{S:2005} since in this case the filling map of Section~\ref{section.filling map}
agrees with the one used in~\cite{S:2005}.
\end{proof}

\subsection{Conjectures and open questions} \label{section.conjectures}
Let us raise several conjectures about our map $\Phi$.

\begin{conjecture}\label{conjecture.broccoli}
$\Phi$ commutes with the Kashiwara operators $f_i,e_i$ for $i\neq 0$.
\end{conjecture}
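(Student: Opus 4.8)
The plan is to mirror the strategy by which the type $A$ version of this statement is established in~\cite{S:2006} and the vector-representation version of type $D$ in~\cite{OSS:2003a}: reduce the assertion to an equivariance property of a single application of $\delta^{(a)}_l$, and then verify it by a case analysis of how $e_i$ and $f_i$ interact with the selection of singular strings in the algorithm of Section~\ref{section.delta}. Some preliminary structural lemmas about $\delta$ are needed for this --- monotonicity of the selected lengths $\ell^{(i)}$ and $\bar{\ell}^{(i)}$, propagation of singularity along a pass, and the precise effect on the vacancy numbers of replacing $B^{a,l}$ by $B^{a-1,1}\otimes B^{a,l-1}$ --- most of which are already implicit in the proof of Theorem~\ref{theorem:single_highest_case}.

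First I would carry out the reduction. The map $\Phi_B$ is an iteration of $\delta$: the ``macro-step'' $\delta^{(1)}_1\cdots\delta^{(r_1-1)}_1\delta^{(r_1)}_{s_1}$ strips off the leftmost column of the first rectangle and carries $(\nu,J)\in\RC(B^{r_1,s_1}\otimes B')$ to an element of $\RC(B^{r_1,s_1-1}\otimes B')$ (or of $\RC(B')$ when $s_1=1$), producing along the way the $r_1$ letters of that column. Since the crystal structure on the rectangular fillings is the one read off columnwise and processed by the signature rule (Definition~\ref{def:KRtableaux}), and tensor-product Kashiwara operators obey the same rule, an induction on the total number of boxes of $B$ reduces the claim to: one macro-step commutes with $e_i$ and $f_i$ in the sense appropriate to the signature rule --- that is, distinguishing whether the operator acts on the newly produced column or on the remaining word, the latter being handled by the induction hypothesis. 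Iterating once more through the $r_1$ constituent maps, everything comes down to the single lemma that for each $i\in\J$ the map $\delta^{(a)}_l$ commutes with $e_i$ and $f_i$ in this refined sense, the produced letter $k$ being regarded as one extra tensor factor of the reading word and the output rigged configuration being the one attached to $B^{a-1,1}\otimes B^{a,l-1}\otimes(\mbox{rest})$.

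The heart of the proof is the verification of this lemma, which I would organize by the position of $i$ relative to $a$ and to the ``turning node'' $n$. Since $e_i$ (resp.\ $f_i$) modifies only $(\nu,J)^{(i)}$, by shortening (resp.\ lengthening) one distinguished string and readjusting its rigging, while $\delta^{(a)}_l$ inspects $(\nu,J)^{(i)}$ for $i\ge a$ on its ascending pass (steps (1)--(2)) and, when the pass returns that far, for small $i$ on its descending pass (step (3)), one must track in each subcase whether the string touched by $e_i/f_i$ coincides with a selected $\ell^{(i)}$ or $\bar{\ell}^{(i)}$. Using the singularity bookkeeping and the vacancy numbers recomputed with respect to $B'$, one then checks that the $\delta$-selection is either left unchanged or shifted by exactly one string in the direction predicted by the signature rule, with the leftover of the crystal action absorbed into the output letter --- for instance $e_i$ converting $k=i+1$ into $k=i$, or $\overline{i}$ into $\overline{i+1}$. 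The degenerate situations --- $a=1$ or $l=1$, where $B^{a-1,1}$ or $B^{a,l-1}$ collapses, and the special prescription $(\nu^{(1)},J^{(1)})=((1,1),(-1,-1))$ --- would be treated as separate base cases.

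I expect the main obstacle to be the turn-around of the algorithm at the spin nodes: the bifurcation in step (2) into $\ell^{(n-1)}$ and $\ell^{(n)}$, followed in step (3) by the descending pass $\bar{\ell}^{(n-2)},\ldots,\bar{\ell}^{(1)}$ under the constraint that a string ``has not yet been selected as $\ell^{(i)}$''. Applying $e_{n-1}$, $e_n$, $f_{n-1}$ or $f_n$ may change which of $\ell^{(n-1)},\ell^{(n)}$ terminates first and hence flip the output letter among $n$, $n-1$, $\overline{n}$, $\overline{n-1}$; and for small $i$ an $e_i/f_i$ may act on a string visited twice --- once as $\ell^{(i)}$ and once as $\bar{\ell}^{(i)}$ --- or on one whose status toggles the ``not yet selected'' clause for a neighbouring string, so that keeping the induction honest through these configurations is delicate. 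Together with the roughly doubled bookkeeping forced by the barred/unbarred dichotomy of $k$, this is where the real work lies. Should a direct case analysis become unwieldy, a possible alternative is to seek a more conceptual argument exploiting that $\Phi$ and the affine crystal isomorphism already agree on highest weight elements by Theorem~\ref{theorem:single_highest_case} together with the bijectivity part of Conjecture~\ref{conj:main}.
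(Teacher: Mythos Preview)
The statement you are addressing is a \emph{Conjecture} in the paper, not a theorem: the authors do not prove it. Immediately after stating it they write only that the type~$A$ analogue was proved in~\cite{DS:2006} and that they ``expect that similar methods might work for the proof in type~$D$.'' There is therefore no proof in the paper to compare your proposal against.

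Your plan is precisely the approach the authors have in mind: reduce to a single $\delta$-step and carry out a case analysis modelled on~\cite{DS:2006} (the reference~\cite{S:2006} you cite establishes the crystal structure on $\RC$, but the commutation of the bijection with $e_i,f_i$ in type~$A$ is~\cite{DS:2006}). Your identification of the main obstacle --- the turn-around at the spin nodes and the double visitation of strings via $\ell^{(i)}$ and $\bar\ell^{(i)}$ --- is exactly right, and is the reason this remained open. What you have written is a credible outline, but it is not yet a proof: the case analysis you describe has not been carried out, and past experience with such arguments (already in type~$A$ the details in~\cite{DS:2006} are substantial) shows that the spin-node and double-pass interactions in type~$D$ generate many more subcases whose consistency is genuinely delicate. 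Your fallback suggestion --- to leverage Theorem~\ref{theorem:single_highest_case} together with bijectivity --- is circular as stated, since the bijectivity of $\Phi$ is itself Conjecture~\ref{conj:main} and is not established in the paper either.
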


The analogue of this conjecture for type $A$ was proved in~\cite{DS:2006}.
We expect that similar methods might work for the proof in type $D$.

\begin{conjecture}\label{conjecture.R}
Let $(\nu,J) \in \RC(B)$, where $B = B_1 \otimes B_2$ is a two-fold tensor product of 
Kirillov--Reshetikhin crystals. Taking the other order $B' = B_2 \otimes B_1$, we have
\[
	R(\Phi_B(\nu,J)) = \Phi_{B'}(\nu,J).
\] 
\end{conjecture}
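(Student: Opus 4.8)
The plan is to deduce Conjecture~\ref{conjecture.R} from the compatibility of $\Phi$ with the full affine crystal structure, exploiting the rigidity of $R$. Recall from~\eqref{equation.R} and the ensuing discussion that, since $B=B_1\otimes B_2$ is connected as an affine crystal (by~\cite[Proposition 3.8]{LOS:2011}), the combinatorial $R$-matrix $R\colon B_1\otimes B_2\to B_2\otimes B_1$ is the \emph{unique} affine crystal isomorphism, pinned down by $R(v_1\otimes v_2)=v_2\otimes v_1$. Since rigged configurations are insensitive to the order of the tensor factors, $\RC(B)=\RC(B')$ for $B'=B_2\otimes B_1$, and $\Psi:=\Phi_{B'}\circ\Phi_B^{-1}$ is a bijection $B\to B'$. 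It therefore suffices to prove: (a) that $\Phi_B$ and $\Phi_{B'}$ are affine crystal isomorphisms, whence $\Psi$ is one; and (b) that $\Psi(v_1\otimes v_2)=v_2\otimes v_1$. Granting (a) and (b), both $\Psi$ and $R$ are affine crystal isomorphisms $B\to B'$ agreeing on the generator, so $\Psi=R$ by connectedness, which is exactly $R(\Phi_B(\nu,J))=\Phi_{B'}(\nu,J)$.

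Part (b) is the easy half. The maximal classical weight $s_1\varpi_{r_1}+s_2\varpi_{r_2}$ occurs in $B$ with multiplicity one, and by~\eqref{eq:conf} the corresponding rigged configuration is the empty one (all $\nu^{(a)}=\emptyset$). A direct run of the algorithm $\delta$ on the empty rigged configuration has every $\delta^{(a)}$ stop immediately with output $a$, so it produces the vacuum columns $1\,2\cdots r_1$ and $1\,2\cdots r_2$; hence $\Phi_B$ sends the empty rigged configuration to $v_1\otimes v_2$ and $\Phi_{B'}$ sends it to $v_2\otimes v_1$, giving $\Psi(v_1\otimes v_2)=v_2\otimes v_1$.

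The substance is Part (a). That $\Phi_B$ intertwines the classical Kashiwara operators $e_i,f_i$ for $i\in\J$ is exactly Conjecture~\ref{conjecture.broccoli}; I expect this can be proved by adapting the type $A$ argument of~\cite{DS:2006}, which tracks how $\delta$ interacts with $e_i,f_i$ box by box. For $i=0$ one uses $f_0=\sigmarc\circ f_1\circ\sigmarc$ on rigged configurations (Definition~\ref{definition.sigma rc}, extended to tensor products via the tensor product signature rule applied to the per-factor involutions) together with the analogous identity on $B$, which reduces the claim to showing that $\Phi_B$ intertwines $\sigma$ with $\sigmarc$. On a single tensor factor this follows from Theorem~\ref{theorem.main} and the off-highest-weight extension of Theorem~\ref{theorem:single_highest_case} (using the crystal-operator definition of the filling map in Definition~\ref{def:KRtableaux} together with Conjecture~\ref{conjecture.broccoli}); for a genuine tensor product it requires controlling how $\Phi$ respects the decomposition into factors, and this is the step I expect to be the main obstacle.

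Concretely, unraveling Part~(a) for $i=0$ amounts to an \emph{exchange lemma} for $\delta$: peeling all columns off the first factor and then the second (which computes $\Phi_B$) versus peeling them off in the opposite order (which computes $\Phi_{B'}$ under the relabelling $\RC(B)=\RC(B')$) must yield outputs related by the fundamental combinatorial $R$-matrix. Establishing this requires tracking precisely which singular strings $\delta$ selects in each order and showing the two selection processes essentially do not interfere. In type $D$ the algorithm $\delta$ has the additional phase in which one ascends through the spin nodes $n-1,n$ and descends again (steps (2)--(3) of its definition), so the case analysis is markedly heavier than in the type $A$ treatment of~\cite{KSS:2002}; this is the genuine technical heart, and the reason the statement is left as a conjecture. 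One promising simplification is to establish Conjecture~\ref{conj:main} (bijectivity of $\Phi$) first, since then, by Conjecture~\ref{conjecture.broccoli}, it suffices to verify $R\circ\Phi_B=\Phi_{B'}$ on $\J$-highest weight elements — a set parameterized by pairs of $\pm$-diagrams — where the linear description of $\gammarc$ from Section~\ref{section.pm for rc} and the $\pm$-diagram formula for the combinatorial $R$-matrix of~\cite{OSaka:2010} can be brought to bear directly.
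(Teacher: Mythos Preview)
The statement is labeled a \emph{Conjecture} in the paper and is not proved there; the paper offers only worked examples as evidence. So there is no proof of the paper's to compare your proposal against.

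As a strategy your outline is sound: invoking connectedness of $B_1\otimes B_2$ and the uniqueness of $R$ as an affine crystal isomorphism reduces the claim to showing that $\Phi_{B'}\circ\Phi_B^{-1}$ is an affine crystal isomorphism sending $v_1\otimes v_2$ to $v_2\otimes v_1$. Part~(b) is correct as you state it. But Part~(a) is not a proof, and you acknowledge as much: the compatibility of $\Phi_B$ with the classical $e_i,f_i$ is precisely Conjecture~\ref{conjecture.broccoli}, still open; the very well-definedness of $\Phi_B^{-1}$ presupposes Conjecture~\ref{conj:main}, also open; and your treatment of $i=0$ requires both an extension of $\sigmarc$ from a single factor to $\RC(B_1\otimes B_2)$ (nowhere defined in the paper) and a proof that $\Phi_B$ intertwines it with the diagonal $\sigma\otimes\sigma$ on $B_1\otimes B_2$. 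None of these ingredients is currently available, so what you have written is a plausible roadmap, not a proof.

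One minor slip in your final paragraph: $\J$-highest weight elements (with $\J=\{1,\ldots,n\}$) are not parameterized by pairs of $\pm$-diagrams. The $\pm$-diagrams parameterize $\{2,\ldots,n\}$-highest weight elements of a \emph{single} $B^{r,s}$; classically highest elements of a two-fold tensor product are a different (and in general larger) set, and even the $\{2,\ldots,n\}$-highest elements of a tensor product are not simply pairs of $\pm$-diagrams because of the tensor-product signature rule.
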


\begin{remark}
Conjecture~\ref{conjecture.R} implies a similar statement for an arbitrary number of tensor factors by
applying a sequence of combinatorial $R$-matrices in an appropriate way.
\end{remark}

Conjecture~\ref{conjecture.R} gives a generalization of
the inverse scattering transform of the box-ball systems~\cite{KOSTY:2006,KSY:2011}
by the same argument.
In this setting, each row of a rigged configuration is regarded as a soliton
of the size equal to the length of the corresponding row.
For $\bigotimes_iB^{1,s_i}$ of type $A$, this point of view is also confirmed~\cite{KSY:2006,Sak:2008} 
by explicitly making a connection with the soliton solution for the KP equation~\cite{JM:1983}.

Let us provide some evidence for Conjecture~\ref{conjecture.R} by examples.

\begin{example}
Let us consider type $D_5^{(1)}$ and $R: B^{2,3}\otimes B^{3,2}\simeq B^{3,2}\otimes B^{2,3}$, where we use the 
anti-Kashiwara convention for tensor products, with weight $\varpi_3+\varpi_2+\varpi_1$.
Take the rigged configuration
\begin{center}
\unitlength 10pt
\begin{picture}(25,4.5)
\put(0.2,3.1){0}
\put(1,3){$\Yboxdim10pt\yng(2)$}
\put(3.2,3.1){0}
\put(5.2,1.1){0}
\put(5.2,2.1){0}
\put(5.2,3.1){1}
\put(6,1){$\Yboxdim10pt\yng(3,1,1)$}
\put(7.2,1.1){0}
\put(7.2,2.1){0}
\put(9.2,3.1){0}
\put(11.2,0.1){0}
\put(11.2,1.1){0}
\put(11.2,2.1){0}
\put(11.2,3.1){0}
\put(12,0){$\Yboxdim10pt\yng(2,2,1,1)$}
\put(13.2,0.1){0}
\put(13.2,1.1){0}
\put(14.2,2.1){0}
\put(14.2,3.1){0}
\put(16.2,2.1){0}
\put(16.2,3.1){0}
\put(17,2){$\Yboxdim10pt\yng(2,1)$}
\put(18.2,2.1){0}
\put(19.2,3.1){0}
\put(21.2,2.1){0}
\put(21.2,3.1){0}
\put(22,2){$\Yboxdim10pt\yng(2,1)$}
\put(23.2,2.1){0}
\put(24.2,3.1){0}
\end{picture}
\end{center}
Under the combinatorial bijection $\Phi$ between rigged configurations and crystal paths, $(\nu,J)$ corresponds
to the two tensor products (using the two orderings, respectively):
$$\Yvcentermath1\young(11\mthree,23\mone)\otimes\young(11,22,3\mtwo)\,,\qquad \young(11,33,\mthree\mone)\otimes\young(111,22\mone)\,.$$
We now convert these KR tableaux into usual KN tableaux to be able to compare the result
with the algorithm for the combinatorial $R$-matrix presented in~\cite{LOS:2011a}.
Take for example $\Yvcentermath1\young(11,33,\mthree\mone)$, which lies in the crystal component of the highest
weight vector $\Yvcentermath1\young(11,22,3\mtwo)$. Under the filling map
\[
	\Yvcentermath1 \fillmap\left(\young(11,2,3) \right) = \young(11,22,3\mtwo) \; .
\]	
Applying the same lowering Kashiwara operators hence yields the identification
\[
	\Yvcentermath1 \fillmap \left(\young(13,3,\mthree)\right) = \young(11,33,\mthree\mone) \; .
\]
By similar computations, we obtain the two tensor products in the
KN tableaux representations:
\[
	\Yvcentermath1\young(11\mthree,23\mone)\otimes\young(11,2,3)\simeq \young(13,3,\mthree)\otimes\young(11,22) \; ,
\]
which agrees with the results in~\cite{LOS:2011a}.
\end{example}

\begin{example}
Consider $B^{2,3}\otimes B^{4,3}\simeq B^{4,3}\otimes B^{2,3}$ of  type $D_6^{(1)}$ and weight 
$\varpi_3+2\varpi_2+3\varpi_1$. Take the rigged configuration
\begin{center}
\unitlength 10pt
\begin{picture}(30,4.5)
\put(0,3.1){$\emptyset$}
\put(2.2,3.1){0}
\put(2.2,2.1){0}
\put(3,2){$\Yboxdim10pt\yng(2,1)$}
\put(4.2,2.1){0}
\put(5.2,3.1){0}
\put(7.2,1.1){0}
\put(7.2,2.1){0}
\put(7.2,3.1){1}
\put(8,1){$\Yboxdim10pt\yng(3,1,1)$}
\put(9.2,1.1){0}
\put(9.2,2.1){0}
\put(11.2,3.1){0}
\put(13.2,0.1){0}
\put(13.2,1.1){0}
\put(13.2,2.1){0}
\put(13.2,3.1){0}
\put(14,0){$\Yboxdim10pt\yng(3,3,1,1)$}
\put(15.2,0.1){0}
\put(15.2,1.1){0}
\put(17.2,2.1){0}
\put(17.2,3.1){0}
\put(19.2,2.1){0}
\put(19.2,3.1){0}
\put(20,2){$\Yboxdim10pt\yng(3,1)$}
\put(21.2,2.1){0}
\put(23.2,3.1){0}
\put(25.2,2.1){0}
\put(25.2,3.1){0}
\put(26,2){$\Yboxdim10pt\yng(3,1)$}
\put(27.2,2.1){0}
\put(29.2,3.1){0}
\end{picture}
\end{center}
By the combinatorial bijection $\Phi$, we obtain the two tensor products:
$$\Yvcentermath1\young(111,23\mtwo)\otimes\young(111,222,\mfour33,\mthree4\mthree)\,,\qquad
\young(111,243,4\mfour4,\mfour\mtwo\mfour)\otimes\young(111,222)\,.$$
The highest weight element corresponding to
\[
	\Yvcentermath1 \young(111,243,4\mfour4,\mfour\mtwo\mfour) \quad \text{is} \quad
	\Yvcentermath1 \young(111,222,3\mfour3,4\mthree4)= \fillmap\left( \young(111,222,3,4) \right) \; .
\]
Hence the tensor products in the KN tableaux representations are
$$\Yvcentermath1\young(111,23\mtwo)\otimes\young(111,222)\simeq \young(111,23\mtwo,4,\mfour)\otimes\young(111,222)\,,$$
which again agrees with~\cite{LOS:2011a}.
\end{example}

\begin{conjecture} \label{conjecture.energy charge}
The bijection $\Phi:(\nu,J)\longmapsto b$ preserves the statistics
\[
	\cc(\nu,J) = \Db(b).
\]
\end{conjecture}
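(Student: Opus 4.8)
The plan is to establish $\cc(\nu,J)=\Db(b)$ by induction on the total number of boxes $\sum_i r_is_i$ of $B=B^{r_1,s_1}\otimes\cdots\otimes B^{r_L,s_L}$, following the strategy of~\cite{KSS:2002} in type $A$ and of~\cite{OSS:2003a} for the vector representation in type $D$. The engine is the recursive structure of $\Phi$: let $\theta$ denote the composition $\delta^{(1)}_1\cdots\delta^{(r_1-1)}_1\delta^{(r_1)}_{s_1}$ which produces the leftmost column of the first rectangle. Then $\theta$ sends $(\nu,J)\in\RC(B)$ to some $(\nu',J')\in\RC(B')$ with $B'=B^{r_1,s_1-1}\otimes B^{r_2,s_2}\otimes\cdots$ (the first factor being dropped when $s_1=1$), together with $r_1$ letters that assemble into a single column $c\in B^{r_1,1}$; by construction $b=\Phi_B(\nu,J)$ has $c$ as the leftmost column of its first rectangle and the remaining columns are $\Phi_{B'}(\nu',J')$. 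Since $B'$ has fewer boxes, the inductive hypothesis gives $\cc(\nu',J')=\Db(\Phi_{B'}(\nu',J'))$, and the base case ($B$ trivial) is immediate, so everything reduces to the \emph{local jump identity}
\[
	\cc(\nu,J)-\cc(\nu',J')=\Db(b)-\Db(\Phi_{B'}(\nu',J')).
\]

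For the right-hand side I would use the intrinsic recursion of the coenergy under the column-splitting embedding $B^{r_1,s_1}\hookrightarrow B^{r_1,1}\otimes B^{r_1,s_1-1}$ of Section~\ref{section.filling map} (cf.\ Remark~\ref{remark.induction}). Writing $b=b_1\otimes b_2\otimes\cdots\otimes b_L$ and $b_1=c\otimes b_1'$ under this embedding, and noting that $\Phi_{B'}(\nu',J')=b_1'\otimes b_2\otimes\cdots\otimes b_L$, the standard expression of the coenergy of a tensor product in terms of local coenergies $\overline{H}$ and combinatorial $R$-matrices yields, up to the normalization of the coenergy,
\[
	\Db(b)-\Db(\Phi_{B'}(\nu',J'))=\Db_{B^{r_1,1}}(c)+\overline{H}\bigl(c\otimes b_1'\bigr)+\sum_{j\ge 2}\overline{H}\bigl(c^{(j)}\otimes b_j^{(j)}\bigr),
\]
where $c^{(j)}$ and $b_j^{(j)}$ are the images after transporting $c$ to the left past $b_1',b_2,\dots,b_{j-1}$ by combinatorial $R$-matrices, and $\Db_{B^{r_1,1}}(c)$ is read off from the vertical-domino count of Proposition~\ref{proposition.kleber}. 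As a cross-check, for a single rectangle ($L=1$) the whole identity also follows conceptually from Theorem~\ref{theorem:single_highest_case} (so $\Phi=\fillmap\circ\iota^{-1}$ on highest weight elements), Theorem~\ref{theorem:cc=D_for_single_rectangle} ($\Db=\cc\circ\iota$), and the fact that the Kirillov--Reshetikhin tableau $b$ and the Kashiwara--Nakashima tableau $\iota^{-1}(\nu,J)$ represent the same element of $B^{r,s}$ (Definition~\ref{def:KRtableaux}); on non-highest-weight elements of a single $B^{r,s}$ this uses in addition that $\cc$ and $\Db$ are constant on $\J$-components (the former by~\cite[Theorem 3.9]{S:2006}) together with Conjecture~\ref{conjecture.broccoli}. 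In parallel I would compute the left-hand side directly from~\eqref{equation.cocharge}, \eqref{equation.cocharge J} and~\eqref{equation.vacancy}, using the explicit description of which boxes the $r_1$ successive applications of $\delta$ remove, how the multiplicities $m^{(a)}_i$ change, and that the riggings of the modified rows are reset to be singular with respect to the new multiplicity array of $B'$.

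The main obstacle — the place where essentially all of the type-$D$ difficulty sits — is proving that these two expressions agree, i.e.\ matching term by term the decrement of $\cc$ caused by one column pass of $\delta$ against $\Db_{B^{r_1,1}}(c)+\overline{H}(c\otimes b_1')+\sum_{j\ge 2}\overline{H}(c^{(j)}\otimes b_j^{(j)})$. Three features make this delicate: (i) the four stopping cases of $\delta$, which emit barred letters asymmetrically once the algorithm ``turns around'' at level $n$, and whose contributions to cocharge must be reconciled with the correspondingly asymmetric behaviour of $\overline{H}$; (ii) the spin nodes $n-1,n$, where $\delta$ treats $(\nu,J)^{(n-1)}$ and $(\nu,J)^{(n)}$ specially and where the ``odd rows only'' prescription for $\overline{\lambda}'$ in Proposition~\ref{proposition.kleber} and the half-integer vertical-domino count of the coenergy must be tracked carefully; and (iii) the non-trivial column splitting of Section~\ref{section.filling map}, for which one needs the type-$D$ analogue of the ``left-split preserves energy'' lemma of~\cite{KSS:2002} (see also~\cite{ST:2011}), namely that coenergy is additive along $B^{r,s}\hookrightarrow B^{r,1}\otimes B^{r,s-1}$. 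A further point is that the terms $\overline{H}(c^{(j)}\otimes b_j^{(j)})$ require control of the combinatorial $R$-matrix on the distinguished column $c$, which is precisely the restriction of Conjecture~\ref{conjecture.R} to that column; I would therefore expect to have to establish this special case of Conjecture~\ref{conjecture.R} as part of the same induction. Once the local jump identity is proven, Conjecture~\ref{conjecture.energy charge} follows by the induction above.
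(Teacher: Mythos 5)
The statement you are proving is left as a conjecture in the paper: the authors only establish it for the $\J$-highest weight elements of a single $B^{r,s}$, by combining Theorem~\ref{theorem:single_highest_case} ($\Phi=\fillmap\circ\iota^{-1}$ on highest weight elements) with Theorem~\ref{theorem:cc=D_for_single_rectangle} ($\Db=\cc\circ\iota$). Your ``cross-check'' paragraph for $L=1$ reproduces exactly that argument, and that part is sound. Everything beyond it, however, is a plan rather than a proof, and the plan has a genuine gap at its center: the local jump identity
\[
	\cc(\nu,J)-\cc(\nu',J')=\Db(b)-\Db(\Phi_{B'}(\nu',J'))
\]
is stated but never verified. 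You yourself flag this as ``the place where essentially all of the type-$D$ difficulty sits,'' and nothing in the proposal resolves it: the change of $\cc$ under one column pass of $\delta$ is not computed (neither the change of $\cc(\nu)$ coming from the altered $m^{(a)}_i$, nor the change of $\sum|J^{(a,i)}|$ coming from resetting the selected rows to be singular against the new multiplicity array), and the four stopping cases and the spin-node behaviour are only listed as difficulties.

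Moreover, the right-hand side of your identity rests on three inputs that are themselves unproven in this setting: (i) additivity of the coenergy along the column-splitting embedding $B^{r,s}\hookrightarrow B^{r,1}\otimes B^{r,s-1}$ in type $D$ with the specific filling map of Section~\ref{section.filling map}; (ii) control of the combinatorial $R$-matrix on the emitted column $c$, which as you note is a special case of Conjecture~\ref{conjecture.R}; and (iii) for non-highest-weight elements, Conjecture~\ref{conjecture.broccoli}. Building a proof of one conjecture on two other open conjectures from the same paper does not close the argument. The strategy is the right one (it is the KSS/OSS template and is presumably how one would eventually prove the statement), but as written the proposal establishes only what the paper already proves, namely the single-rectangle highest-weight case.
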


For the highest weight element of $B^{r,s}$ this conjecture is proved in
Theorem~\ref{theorem:cc=D_for_single_rectangle}.
 


\begin{thebibliography}{99}

\bibitem{DS:2006}
L.~Deka and A.~Schilling,
\textit{New fermionic formula for unrestricted Kostka polynomials},
J. Combinatorial Theory, Series A \textbf{113} (2006) 1435--1461,
(see also electronic version math.CO/0509194) 

\bibitem{FOS:2009}
G.~Fourier, M.~Okado and A.~Schilling,
\textit{Kirillov-Reshetikhin crystals for nonexceptional types},
Advances in Mathematics \textbf{222} (3) (2009) 1080--1116

\bibitem{HKOTY:1999}
G.~Hatayama, A.~Kuniba, M.~Okado, T.~Takagi and Y.~Yamada,
\textit{Remarks on fermionic formula}
Recent developments in quantum affine algebras and related topics (Raleigh, NC, 1998), 243--291,
Contemp. Math., \textbf{248}, Amer. Math. Soc., Providence, RI, 1999

\bibitem{HK:2002}
J.~Hong and S.-J.~Kang,
Introduction to quantum groups and crystal bases,
Graduate Studies in Mathematics, \textbf{42}, American Mathematical Society, Providence, RI, 2002. xviii+307 pp.

\bibitem{JM:1983}
M.~Jimbo and T.~Miwa,
\textit{Solitons and infinite dimensional Lie algebras},
Publ. RIMS. Kyoto Univ. \textbf{19} (1983) 943--1001

\bibitem{KMN1:1992}
S-J.~Kang, M.~Kashiwara, K.~C.~Misra, T.~Miwa, T.~Nakashima, and A.~Nakayashiki,
\textit{Affine crystals and vertex models},
Int. J. Mod. Phys. A \textbf{7} (suppl. 1A) (1992) 449--484

\bibitem{Kashiwara:1994}
M.~Kashiwara,
\textit{On crystal bases of the q-analogue of universal enveloping algebras},
Duke Math. J. \textbf{73} (1994), 383--413.

\bibitem{KN:1994}
M.~Kashiwara and T.~Nakashima,
\textit{Crystal graphs for representations of the $q$-analogue of classical Lie algebras},
J. Algebra \textbf{165} (1994), no. 2, 295--345

\bibitem{KKR:1986}
S.~V.~Kerov, A.~N.~Kirillov and N.~Yu.~Reshetikhin,
\textit{Combinatorics, the Bethe ansatz and representations of the
symmetric group}, Zap.Nauchn. Sem. (LOMI) {\bf 155} (1986) 50--64.
(English translation: J. Sov. Math. {\bf 41} (1988) 916--924.)

\bibitem{KSS:2002}
A.~N.~Kirillov, A.~Schilling and M.~Shimozono,
\textit{A bijection between Littlewood-Richardson tableaux and rigged configurations},
Selecta Mathematica (N.S.) \textbf{8} (2002) 67--135

\bibitem{Kleber:1998} 
M.~Kleber,
\textit{Finite dimensional representations of quantum affine algebras},
Ph.D. dissertation at University of California Berkeley, 55 pages, 1998,
math.QA/9809087

\bibitem{KOSTY:2006}
A.~Kuniba, M.~Okado, R.~Sakamoto, T.~Takagi and Y.~Yamada,
\textit{Crystal interpretation of Kerov-Kirillov-Reshetikhin bijection},
Nuclear Physics \textbf{B740} (2006) 299--327 

\bibitem{KSY:2006}
A.~Kuniba, R.~Sakamoto and Y.~Yamada,
\textit{Tau functions in combinatorial Bethe ansatz},
Nuclear Physics \textbf{B786} (2007) 207--266

\bibitem{KSY:2011}
A.~Kuniba, R.~Sakamoto and Y.~Yamada,
\textit{Generalized energies and integrable $D^{(1)}_n$ cellular automaton},
in ``New Trends in Quantum Integrable Systems" (World Scientific 2011) 221--242 
 
\bibitem{LOS:2011}
C.~Lecouvey, M.~Okado and M.~Shimozono,
\textit{Affine crystals, one dimensional sums and parabolic Lusztig $q$-analogues},
Math. Zeitschrift (2011) DOI:10.1007/s00209-011-0892-9 (arXiv:1002.3715)

\bibitem{LOS:2011a}
C.~Lecouvey, M.~Okado and M.~Shimozono, 
\textit{$X=K$ under review}, 
RIMS K\^oky\^uroku Bessatsu \textbf{B28} (2011) 155--164

\bibitem{O:2007}
M.~Okado,
\textit{Existence of crystal bases for Kirillov--Reshetikhin modules of type $D$},
Publ. Res. Inst. Math. Sci. \textbf{43} (2007), no. 4, 977--1004

\bibitem{OSaka:2010}
M.~Okado and R.~Sakamoto,
\textit{Combinatorial R-matrices for Kirillov--Reshetikhin crystals of
type $D^{(1)}_n$, $B^{(1)}_n$, $A^{(2)}_{2n-1}$},
International Mathematics Research Notices, Volume 2010, (2010) 559--593

\bibitem{OSaka:2011}
M.~Okado and R.~Sakamoto,
\textit{Stable rigged configurations for quantum affine algebras of nonexceptional types},
Advances in Mathematics \textbf{228} (2011) 1262--1293

\bibitem{OS:2008}
M.~Okado and A.~Schilling,
\textit{Existence of Kirillov-Reshetikhin crystals for nonexceptional types},
Representation Theory \textbf{12} (2008) 186--207 

\bibitem{OSS:2003a}
M.~Okado, A.~Schilling and M.~Shimozono,
\textit{A crystal to rigged configuration bijection for nonexceptional affine algebras},
"Algebraic Combinatorics and Quantum Groups", Edited by N. Jing, World Scientific (2003), 85--124 

\bibitem{OSS:2003}
M.~Okado, A.~Schilling and M.~Shimozono,
\textit{Virtual crystals and fermionic formulas of type $D_{n+1}^{(2)}, A_{2n}^{(2)},$ and $C_n^{(1)}$},
Representation Theory \textbf{7} (2003) 101--163

\bibitem{OSano}
M.~Okado and N.~Sano,
\textit{KKR type bijection for the exceptional affine algebra $E_6^{(1)}$},
preprint (2011) arXiv:1105.1636

\bibitem{Sak:2008}
R.~Sakamoto,
\textit{Crystal interpretation of Kerov-Kirillov-Reshetikhin Bijection II. Proof for $sl_n$ case},
J. Algebr. Comb. \textbf{27} (2008) 55--98 

\bibitem{S:2005}
A.~Schilling,
\textit{A bijection between type $D_n^{(1)}$ crystals and rigged configurations},
J. Algebra \textbf{285} (2005) 292--334

\bibitem{S:2006}
A.~Schilling,
\textit{Crystal structure on rigged configurations},
International Mathematics Research Notices, Volume 2006, Article ID 97376, Pages 1--27

\bibitem{S:2008}
A.~Schilling,
\textit{Combinatorial structure of Kirillov--Reshetikhin crystals of type $D_n^{(1)}$, $B_n^{(1)}$, $A_{2n-1}^{(2)}$},
J. Algebra \textbf{319} (2008) 2938--2962

\bibitem{SS:X=M}
A.~Schilling and M.~Shimozono,
\textit{$X=M$ for symmetric powers},
Journal of Algebra \textbf{295} (2006) 562--610

\bibitem{ST:2011}
A.~Schilling and P.~Tingley,
\textit{Demazure crystals, Kirillov-Reshetikhin crystals, and the energy function},
The Electronic Journal of Combinatorics \textbf{19} Issue 2 (2012) P2

\bibitem{SW:2010}
A.~Schilling and Q.~Wang,
\textit{Promotion operator on rigged configurations of type $A$},
The Electronic Journal of Combinatorics \textbf{17} Issue 1 (2010) R24

\end{thebibliography}
\end{document}